\documentclass[11pt]{amsart}
\usepackage{tikz-cd}
\usepackage{enumitem}
\setlist[description]{leftmargin=\parindent,labelindent=\parindent}
\usetikzlibrary{matrix,arrows,decorations.pathmorphing}
\usepackage{amssymb}
\usepackage{amsmath,amscd}
\usepackage{mathrsfs}
\usepackage{url}
\usepackage{cite}
\usepackage{fullpage}
\usepackage{hyperref}
\usepackage{verbatim}
\usepackage{comment}
\usepackage{fancyvrb}
\usepackage{fvextra}
\usepackage{caption}
\usepackage{ytableau}
\usepackage{stmaryrd}
\usepackage{appendix}
\usepackage{xcolor}
\usepackage[dvipsnames]{xcolor}
\usepackage{etoolbox}

\allowdisplaybreaks


\newtheorem{thm}{Theorem}
\newtheorem{prop}[thm]{Proposition}
\newtheorem{lem}[thm]{Lemma}
\newtheorem{cor}[thm]{Corollary}
\newtheorem{conj}[thm]{Conjecture}
\newtheorem*{thm11}{Theorem 10$'$}

\theoremstyle{definition}
\newtheorem{definition}[thm]{Definition}
\newtheorem{example}[thm]{Example}
\newtheorem{rem}[thm]{Remark}
\newtheorem{question}[thm]{Question}
\newtheorem{speculation}[thm]{Speculation}

\AtBeginEnvironment{definition}{\pushQED{\qed}}
\AtEndEnvironment{definition}{\popQED}

\AtBeginEnvironment{rem}{\pushQED{\qed}}
\AtEndEnvironment{rem}{\popQED}

\AtBeginEnvironment{example}{\pushQED{\qed}}
\AtEndEnvironment{example}{\popQED}

\newcommand{\X}{\mathcal{X}}
\newcommand{\T}{\mathsf{T}}
\newcommand{\J}{\mathcal{J}}
\newcommand{\N}{\mathcal{N}}

\newcommand{\Z}{\mathbb{Z}}

\newcommand{\cc}{\mathbb{C}}
\newcommand{\inv}{\mathfrak{I}_{g, s}}

\newcommand{\R}{\mathsf{R}}

\newcommand{\M}{\mathcal{M}}
\newcommand{\p}{\mathbb{P}}

\newcommand{\A}{\mathcal{A}}

\newcommand{\AJ}{{\textnormal{AJ}}}

\newcommand{\CH}{\mathsf{CH}}

\renewcommand{\H}{\mathsf{H}}

\DeclareMathOperator{\NL}{NL}

\newcommand{\E}{\mathcal{E}}
\renewcommand{\O}{\mathcal{O}}

\DeclareMathOperator{\ev}{ev}
\renewcommand{\tilde}{\widetilde}

\DeclareMathOperator{\Sp}{Sp}

\DeclareMathOperator{\Aut}{{\mathsf{Aut}}}

\newcommand{\Tor}{\mathsf{Tor}}

\DeclareMathOperator{\Sym}{Sym}

\DeclareMathOperator{\codim}{codim}

\newcommand{\taut}{\mathsf{taut}}
\newcommand{\Abar}{\overline{\mathcal A}}
\newcommand{\Mbar}{\overline{\mathcal M}}
\newcommand{\Mct}{\mathcal M^{\ct}}
\newcommand{\aj}{\operatorname{aj}}
\newcommand{\PR}{\operatorname{PR}}

\newcommand{\vir}{\mathrm{vir}}
\newcommand{\ct}{\mathrm{ct}}
\newcommand{\Jac}{\mathsf{Jac}}

\newcommand{\pii}{\rho}

\usepackage{wrapfig}
\definecolor{BrickRed}{HTML}{AA6A43}
\renewcommand{\bar}{\overline}

\newcommand {\replacemu}{\underline{g}}
\newcommand {\FM}{\textrm{FM}}
\makeatletter
\def\@defaultbiblabelstyle#1{[#1]}
\makeatother

\DeclareMathAlphabet{\pazocal}{OMS}{zplm}{m}{n}

\title[]{Torelli loci, product cycles,\\ and  the homomorphism conjecture for $\mathcal{A}_g$}

\dedicatory{To Carel Faber on the occasion of his 1000000{th} birthday}

\author{Samir Canning}
\address{Department of Mathematics, ETH Z\"urich}
\email {samir.canning@math.ethz.ch}

\author{Lycka Drakengren}
\address{Department of Mathematics, ETH Z\"urich}
\email {lycka.drakengren@math.ethz.ch}

\author{Jeremy Feusi }
\address{Department of Mathematics, ETH Z\"urich}
\email {jeremy.feusi@math.ethz.ch}

\author{\\Daniel Holmes}
\address{Institute of Science and Technology Austria (ISTA)}
\email{daniel.holmes@ist.ac.at}

\author{Aitor Iribar L\'opez}
\address{Department of Mathematics, ETH Z\"urich}
\email {aitor.iribarlopez@math.ethz.ch}

\author{Denis Nesterov}
\address{Department of Mathematics, ETH Z\"urich}
\email {denis.nesterov@math.ethz.ch}

\author{Dragos Oprea}
\address{Department of Mathematics, University of California, San Diego}
\email {doprea@math.ucsd.edu}

\author{Rahul Pandharipande}
\address{Department of Mathematics, ETH Z\"urich}
\email {rahul@math.ethz.ch}

\author{Johannes Schmitt}
\address{Department of Mathematics, ETH Z\"urich}
\email {johannes.schmitt@math.ethz.ch}

\author{Zheming Sun}
\address{Department of Mathematics, ETH Z\"urich}
\email {zhesun@ethz.ch}

\date{}
\begin{document}
\baselineskip=17pt
\footskip=1.5\normalbaselineskip

\begin{abstract}
The tautological $\mathbb{Q}$-subalgebra $\R^*(\mathcal{A}_g) \subset \mathsf{CH}^*(\mathcal{A}_g)$ of the Chow ring of the moduli space
of principally polarized abelian varieties is
generated by the Chern classes of the Hodge bundle. A canonical $\mathbb{Q}$-linear projection operator
$$\taut: \CH^*(\mathcal{A}_g) \rightarrow \R^*(\mathcal{A}_g)$$
was constructed in \cite{CMOP}. We present here new
calculations of intersection products of the Torelli locus in $\mathcal{A}_g$ with the product loci
$\mathcal{A}_{r}\times \mathcal{A}_{g-r} \rightarrow 
\mathcal{A}_g$
for $r\leq 3$. The results  suggest  that $\taut$ is a $\mathbb{Q}$-algebra homomorphism (at least for special cycles). We 
discuss a conjectural framework for the 
homomorphism property of tautological projection.

Our calculations follow
two independent approaches. The first is
a direct study of the excess intersection geometry of the fiber product of the Torelli and product morphisms. The second approach is by recasting the geometry in terms of families
Gromov-Witten classes, which are computed by a wall-crossing formula related to unramified maps \cite{DenisMax, Nesterov}.
Both approaches are of interest beyond their use here.

We define new tautological projections of cycles on the fiber products \(\mathcal X_g^s \to \mathcal A_g\) of the universal family. We compute these projections for a class of product cycles on \(\mathcal X_g^s\) in terms of a determinant involving the universal theta divisors and Poincar\'e classes. 
The Torelli pullback of the morphism \(\mathcal A_1 \times \mathcal A_5 \to \mathcal A_6\) was used in \cite{COP} to construct a nontrivial element of the Gorenstein kernel of ${\mathsf R}^5(\mathcal M^{\mathrm{ct}}_6)$. Using Abel-Jacobi pullbacks of product cycles on $\X_g^s$ and their projections, we  construct a new family of classes which we conjecture to lie in the Gorenstein kernels of the tautological rings \(\R^*(\mathcal M^{\mathrm{ct}}_{g,n})\). In particular, we construct nontrivial elements of the Gorenstein kernels of $\R^5(\M_{5,2}^{\ct})$
and $\R^5(\M_{4,4}^{\ct})$.
Whether all of these classes generate the Gorenstein kernels of all $\R^*(\M_{g, n}^{\ct})$ is an intriguing open question.
\end{abstract}

\date{June 2026}
\maketitle
\setcounter{tocdepth}{1}
\tableofcontents{}

\section{Introduction}
\subsection{Tautological classes on the moduli space of abelian varieties}

Let $\A_g$ be the moduli space of principally polarized abelian varieties of dimension $g\geq 1$. Points of $\A_g$ correspond to pairs $(X, \theta)$ of an abelian variety $X$ together with a polarization given by an ample line bundle $\theta$ satisfying $h^0(X, \mathcal O(\theta))=1$. The moduli space
$\A_g$ is an irreducible, nonsingular Deligne-Mumford stack of dimension $\binom{g+1}{2}$ equipped with a universal family and a universal zero section,
$$\pi:\X_g\rightarrow \A_g\,, \ \ \ \mathsf{z}:\A_g\rightarrow \X_g\, .$$ We refer the reader to  \cite{BL} for a foundational treatment of the moduli of abelian varieties.

The Hodge bundle $\mathsf{z}^*\Omega_\pi=\mathbb E\rightarrow \A_g$ is the rank $g$ vector bundle with fibers
$$
\mathbb E|_{[(X,\theta)]} = \mathsf{H}^0(X, \Omega_X)\, .
$$
The Chern classes $\lambda_i = c_i(\mathbb E)$ are naturally defined in the Chow ring{\footnote{All Chow and cohomology rings will be taken with $\mathbb{Q}$-coefficients.}} of the moduli space of abelian varieties.
Over the complex numbers, $\mathcal A_g$ is isomorphic to the orbifold $[\mathbb H_g / \operatorname{Sp}_{2g}(\mathbb Z)]$, where $\mathbb{H}_g$ is the Siegel upper half space. Since $\mathbb H_g$ is contractible,
$$
\H^*(\A_g) \cong \H^*(\operatorname{Sp}_{2g}(\mathbb Z), \mathbb Q)\, .
$$
By a result of Borel \cite{B}, the rational group cohomology of $\operatorname{Sp}_{2g}(\mathbb Z)$ stabilizes to a polynomial algebra in the \emph{odd} lambda classes:
$$
\lim_{g \to \infty}\H^{*}(\operatorname{Sp}_{2g}(\mathbb Z), \mathbb Q) = \mathbb Q[\lambda_1, \lambda_3, \lambda_5, \ldots]\, .
$$

Motivated by Borel's calculation, van der Geer \cite{vdg} defined  the tautological ring
$$
\R^*(\A_g) \subset \CH^*(\A_g)
$$
as the subalgebra generated by $\lambda_1, \ldots , \lambda_g$. The structure of $\R^*(\A_g)$ is well understood:
\begin{thm}[van der Geer \cite{vdg}]\label{thm:vdG_tautological}\leavevmode
    \begin{enumerate}
        \item[\emph{(i)}] The relations $c(\mathbb E\oplus \mathbb E^\vee)=1$ and $\lambda_g=0$ hold 
        in $\R^*(\A_g)$ and generate the ideal of all relations among the $\lambda$ classes.
        \item[\emph{(ii)}] $\R^{\binom{g}{2}}(\A_g)\stackrel{\sim}{=} \mathbb{Q}$ and
        is generated by the class $\prod_{i=1}^{g-1}\lambda_i$. 
        \item[\emph{(iii)}] 
        The intersection pairing
        $$
        \R^*(\A_g) \times \R^{\binom{g}{2}-*}(\A_g) \rightarrow \R^{\binom{g}{2}}(\A_g) \cong \mathbb Q
        $$
        is perfect.
    \end{enumerate}
\end{thm}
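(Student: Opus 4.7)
The argument naturally splits into three steps: (a) establish the two relations in (i); (b) compute, as an abstract algebra, the quotient
\[
A_g := \mathbb Q[\lambda_1,\dots,\lambda_g]\big/\big(c(\mathbb E)c(\mathbb E^\vee)-1,\ \lambda_g\big);
\]
(c) show that the evident surjection $A_g \twoheadrightarrow \R^*(\A_g)$ is an isomorphism.

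For (a), the identity $c(\mathbb E)c(\mathbb E^\vee) = 1$ is Mumford's relation: the principal polarization yields a $C^\infty$ isomorphism $\mathbb E^\vee \cong \bar{\mathbb E}$, and the underlying real bundle of $\mathbb E \oplus \bar{\mathbb E}$ is the flat complexified local system $R^1\pi_*\mathbb R\otimes\mathbb C$, whose total Chern class is $1$. The vanishing $\lambda_g = 0$ is a degeneration statement. Passing to a partial toroidal compactification $\tilde{\A}_g$ obtained by adjoining the boundary divisor $D$ of torus-rank-one semi-abelian varieties, the extended Hodge bundle $\tilde{\mathbb E}$ admits a canonical trivial line subbundle along $D$, generated by the invariant differential $dt/t$ pulled back from the toric factor. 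Consequently $c_g(\tilde{\mathbb E})$ is represented by a cycle contained in $D$, and its restriction to $\A_g$ vanishes.

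For (b), introducing Chern roots $x_1,\dots,x_g$ of $\mathbb E$, the two defining relations read $\prod_i(1-x_i^2) = 1$ and $\prod_i x_i = 0$. The first relation presents $\mathbb Q[\lambda]/(c(\mathbb E)c(\mathbb E^\vee)-1)$ as the cohomology $\H^*(\operatorname{Sp}_{2g}/U(g),\mathbb Q)$ of the compact dual Lagrangian Grassmannian; imposing the additional relation $\lambda_g = 0$ produces $A_g$, and a direct Schubert computation identifies it as a Gorenstein ring of socle degree $\binom{g+1}{2}-g = \binom{g}{2}$ with one-dimensional socle generated by $\lambda_1\lambda_2\cdots\lambda_{g-1}$. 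For (c), the strategy is Hirzebruch--Mumford proportionality on a smooth toroidal compactification $\bar{\A}_g$: intersection numbers $\int_{\bar{\A}_g}\bar\lambda_1^{a_1}\cdots\bar\lambda_g^{a_g}$ are a nonzero rational multiple of the corresponding Schubert integrals on $\operatorname{Sp}_{2g}/U(g)$. Combined with the socle identification $\R^{\binom{g}{2}}(\A_g) \to \mathbb Q$, $\alpha \mapsto \int_{\bar{\A}_g}\bar\alpha\cdot\bar\lambda_g$, this yields enough nondegenerate pairings to separate elements of $A_g$, forcing the surjection $A_g \twoheadrightarrow \R^*(\A_g)$ to be injective. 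Parts (ii) and (iii) of the theorem then follow from the Gorenstein structure of $A_g$ established in (b).

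The technical heart of the argument is step (c). A subtle point is that $\bar\lambda_g$, although zero on the open interior $\A_g$, is nontrivial on the compactification $\bar{\A}_g$; it is precisely this boundary-supported extension, together with the nonvanishing of the Hirzebruch--Mumford proportionality constant, that realizes the socle isomorphism and makes the pairing perfect. A secondary concern is that the boundary argument in step (a) for $\lambda_g = 0$ requires care: one must control the excess from $D$ using the trivial-sub splitting rather than merely observe $c_g(\tilde{\mathbb E})|_D = 0$, since the former conclusion actually produces a cycle representative supported on $D$.
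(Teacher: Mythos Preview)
The paper does not prove this theorem; it is stated with attribution to van der Geer \cite{vdg} and used as background. So there is no proof in the paper to compare against.

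On the merits of your sketch: parts (b) and (c) are correct in outline and match the standard route. The identification $A_g \cong \CH^*(\mathsf{LG}_{g-1})$ is right (the paper records this after the theorem), and Hirzebruch--Mumford proportionality on a toroidal compactification, together with the nonvanishing of $\int_{\bar{\A}_g}\lambda_1\cdots\lambda_g$, does furnish the nondegeneracy that forces $A_g\twoheadrightarrow\R^*(\A_g)$ to be injective.

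Your argument for $\lambda_g=0$, however, has a real gap. A trivial line subbundle of $\tilde{\mathbb E}|_D$ yields $c_g(\tilde{\mathbb E})|_D=0$, a statement about the restriction of the class to $D$. It does \emph{not} produce a cycle representative of $c_g(\tilde{\mathbb E})$ supported on $D$, which is what is required for $\lambda_g|_{\A_g}=0$ via excision. You correctly distinguish these two statements in your last paragraph, but then assert that the trivial-sub datum delivers the stronger one; it does not. A nowhere-vanishing section of $\tilde{\mathbb E}|_D$ is not the same as a section of $\tilde{\mathbb E}$ on $\tilde{\A}_g$ that is nowhere-vanishing along $\A_g$, and there is no formal mechanism by which the former upgrades to the latter. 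Van der Geer's proof of $\lambda_g=0$ needs genuinely more than this observation --- one must relate $\lambda_g$ to the boundary class on the partial compactification (or, equivalently, exploit that on the Satake compactification the boundary $\A_{g-1}^*$ has codimension exactly $g$ and that the Hodge bundle descends there), not merely know that $\lambda_g$ dies after restriction to $D$.
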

The presentation (i) yields an isomorphism $$\mathsf {R}^*(\mathcal A_g)\cong \CH^*(\mathsf{LG}_{g-1})\,,$$ where ${\mathsf{LG}_{g-1}}$ is the Lagrangian Grassmannian of $(g-1)$-dimensional Lagrangian subspaces of $\mathbb C^{2g-2}.$

\subsection{Tautological projection for \texorpdfstring{$\A_g$}{Ag}}
By definition, $\R^*(\A_g)$ is a $\mathbb{Q}$-linear subspace of $\CH^*(\A_g)$.
A canonical $\mathbb{Q}$-linear \emph{tautological projection} was constructed in \cite{CMOP}. A review of the construction is presented here.

Given $\alpha\in \CH^{\binom{g}{2}}(\mathcal{A}_g)$, consider the
evaluation,
\begin{equation*}
\epsilon^{\mathrm{ab}}:\CH^{\binom{g}{2}}(\mathcal{A}_g) \to \mathbb Q\, , \ \quad \alpha \mapsto \int_{\overline{\mathcal{A}}_g}  \overline{\alpha} \cdot \lambda_g\ \, ,
\end{equation*} 
where $\overline \alpha\in \CH^{\binom{g}{2}}(\overline{\A}_g)$ is a lift of $\alpha$ to a nonsingular 
toroidal compactification,
$$
\A_g \subset \overline {\A}_g\, ,\ \ \ 
\overline{\alpha}|_{\A_g} = \alpha\, .
$$
The evaluation $\epsilon^{\mathrm{ab}}(\alpha)$ is well-defined (independent of both the choice of the toroidal compactification and the choice of the lift) by the
results of \cite{CMOP}. 

In addition to the evaluation $\epsilon^{\mathrm{ab}}$, we also have a {\em $\lambda_g$-pairing},
\begin{equation}\label{par2}
\langle\, ,\, \rangle:\,
\mathsf{CH}^*(\mathcal{A}_g)\times \mathsf{R}^{\binom{g}{2}-*}(\mathcal{A}_g)\,\to\,  \mathbb Q\, ,\ \quad \langle \gamma, \delta\rangle=
\epsilon^{\mathrm{ab}}(\gamma\cdot \delta)=
\int_{{\overline{\mathcal{A}}_g}}\overline{\gamma}\cdot \overline{\delta}\cdot \lambda_g \, ,
\end{equation}
The results of \cite{CMOP} show that the $\lambda_g$-pairing is independent of choices and, therefore, well-defined.\\
The restriction of the $\lambda_g$-pairing to tautological classes  
$$\R^*(\A_g)\times \R^{\binom{g}{2}-*}(\A_g)
\to \R^{\binom{g}{2}}(\A_g) 
\cong \mathbb Q$$
is nondegenerate by Theorem \ref{thm:vdG_tautological}. The map to $\mathbb{Q}$ on the right is defined by
$\epsilon^{\mathrm{ab}}$ and is an isomorphism
because{\footnote{Here, $B_{2i}$ is the Bernoulli number.}}
\begin{equation}\label{constantintro}\epsilon^{\mathsf{ab}}\left(\prod_{i=1}^{g-1} \lambda_i\right)= 
\int_{\overline{\A}_g}\,
\prod_{i=1}^{g} \lambda_i
=
\prod_{i=1}^{g} 
\frac{|B_{2i}|}{4i}\, 
\, ,\end{equation}
see 
\cite[page 71]{vdg}.

\begin{definition}[\hspace{-0.5pt}\cite{CMOP}] \label{cvvt}
Let $\gamma \in \mathsf{CH}^*(\mathcal{A}_g)$.
The {\em tautological projection} $\mathsf{taut}(\gamma) \in \mathsf{R}^*(\mathcal{A}_g)$ is the unique tautological class satisfying
$$\langle \mathsf{taut}(\gamma),\delta\rangle =
\langle \gamma,\delta \rangle $$
for all classes $\delta\in \mathsf{R}^*(\mathcal{A}_g)$.
\end{definition}

\noindent $\bullet$ If $\gamma \in \mathsf{R}^*(\mathcal{A}_g)$, then
$\gamma = \mathsf{taut}(\gamma)$, so we have a $\mathbb{Q}$-linear 
projection operator:
$$\mathsf{taut}: \mathsf{CH}^*(\A_g) \to \mathsf{R}^*(\A_g)\, , \ \ \ 
\mathsf{taut}\circ \mathsf{taut}= \mathsf{taut}\, .$$

\noindent $\bullet$ For $\gamma \in \mathsf{CH}^*({\mathcal{A}}_g)$, 
tautological projection provides a canonical decomposition 
$$\gamma = \mathsf{taut}(\gamma)  + (\gamma -
\mathsf{taut}(\gamma))
$$
into purely tautological and purely non-tautological parts. 

\noindent $\bullet$ If $\gamma\in \CH^*(\A_g)$ and $\delta\in \R^*(\A_g)$, we have \begin{equation}\label{tss}\taut(\gamma\cdot \delta)= \taut(\gamma)\cdot \delta\, .\end{equation}

\subsection{Geometric cycles on \texorpdfstring{$\A_g$}{Ag}}\label{gconag}

The geometry of abelian varieties provides a collection of interesting cycles on $\CH^*(\A_g)$. We will discuss three types of geometric cycles here.

\vspace{8pt}
\noindent {\underline{Product cycles:}}
For a partition{\footnote{The parts are an ordered set of positive integers.}} $g = g_1 + \ldots + g_k$, the product map
\begin{equation}\label{eq:product_map}
    \A_{g_1} \times \cdots \times \A_{g_k} \rightarrow \A_g\, 
\end{equation}
is proper. The image is the set of $(X, \theta)\in \A_g$ for which there
exists a decomposition
\begin{equation}\label{eq:product ppav}
    (X, \theta) \cong (X_1, \theta_1)\times \cdots \times (X_k, \theta_k)
\end{equation}
as principally polarized abelian varieties. 
Let $$[\A_{g_1}\times \cdots \times \A_{g_k}]\in \CH^*(\A_g)$$
be the pushforward of the fundamental class along the morphism \eqref{eq:product_map}.

The tautological projections of all product cycles 
in $\A_g$ have been computed in \cite[Theorem 6]{CMOP}.
The cases most relevant for our study here are:
\begin{align*}
\begin{split}
    \mathsf{taut}\left([{\A}_1\times \A_{g-1}]\right)&=\frac{g}{6|B_{2g}|} \lambda_{g-1}\, ,
\end{split}\\
\begin{split}
\mathsf{taut}\left(\left[\A_{2}\times\A_{g-2}\right]\right)&=\frac{1}{360}\cdot \frac{g(g-1)}{|B_{2g}| |B_{2g-2}|}\cdot \lambda_{g-1}\lambda_{g-3}\,,
\end{split}\\
\begin{split}
\mathsf{taut}\left(\left[\A_{3}\times\A_{g-3}\right]\right)&=\frac{1}{45360}\cdot \frac{g(g-1)(g-2)}{|B_{2g}||B_{2g-2}||B_{2g-4}|}\cdot \lambda_{g-1}(\lambda_{g-4}^2-\lambda_{g-3}\lambda_{g-5})\, ,
\end{split}\\
\begin{split}
\mathsf{taut}\left(\left[\A_{1}\times\A_{1}\times\A_{g-2}\right]\right)&=\frac{1}{36}\cdot 
\frac{g(g-1)}{|B_{2g}||B_{2g-2}|}\cdot \lambda_{g-1}\lambda_{g-2}\, .
\end{split}
\end{align*}

\vspace{8pt}
\noindent {\underline{Noether-Lefschetz cycles:}}
By permitting the decomposition \eqref{eq:product ppav} to hold up to an {\em isogeny} of polarized abelian varieties, we obtain an infinite collection of cycle classes which contain the product loci $[\mathcal A_{g_1}\times \cdots \times \mathcal A_{g_k}]\in \CH^*(\A_g)$ as special cases.
Even more generally, we can consider irreducible components of the Noether-Lefschetz locus where the abelian varieties $(A,\theta)$ have N\'eron-Severi rank at least $k$.
We will refer to the fundamental classes of any such irreducible loci as  \emph{Noether-Lefschetz cycles}. A classification of Noether-Lefschetz
loci with generic 
N\'eron-Severi rank 2 can be found in \cite{DL}.

After the product loci, the simplest examples of Noether-Lefschetz loci are:
$$
\NL_{g,d} = \{(X, \theta) \,|\, \text{ $\exists$  an elliptic subgroup }E \subset X\text{ with }\deg(\theta|_E)=d\} \subset \A_g\, .
$$
A formula{\footnote{The product in the formula is over all primes $p$ dividing $d$.}} for $\taut([\NL_{g,d}])\in \R^{g-1}(\A_g)$ was proven by Iribar L\'opez \cite[Theorem 3]{Iribar}:
\begin{equation}
\label{je3}
\mathsf{taut}([\NL_{g,d}]) = \frac{d^{2g-1}g}{6|B_{2g}|} \prod_{p|d} (1-p^{2-2g}) \lambda_{g-1}\, ,
\end{equation}
for $g\geq 2$ and $d\geq 1$. A connection to modular forms can be found in a different basis. 
Define
$$
[\widetilde{\NL}_{g,d}] = \sum_{e|d} \sigma_1\left(\frac{d}{e}\right)[\NL_{g,e}]
\ \in \ \mathsf{CH}^{g-1}(\mathcal{A}_g)
\, ,
$$
where $\sigma_r(n)= \sum_{d|n} d^r$, see \cite{GreerLian}.
Iribar L\'opez's result \eqref{je3} can then be rewritten{\footnote{Equation \eqref{aitorproj} is correct also for $g=1$ with the convention $[\NL_{1,d}]=[\mathcal{A}_1]$.}} as 
\begin{equation}\label{aitorproj}
\mathsf{taut}\left( \frac{(-1)^g}{24}\lambda_{g-1}+  \sum_{d = 1}^\infty[\widetilde{\NL}_{g,d}]\, q^d\right) = \frac{(-1)^g}{24}E_{2g}(q)\lambda_{g-1}\,,
\end{equation}
where $E_{2g}(q)$ is the Eisenstein modular function of weight $2g$ in the variable $q= e^{2\pi i \tau}$.
Further connections to modularity, including conjectures by Greer, Lian, Iribar L\'opez, and Pixton, can be found in the lecture \cite{RahulOxford}.
The tautological projections of $[\widetilde{\mathrm{NL}}_{g,d}]$ have a surprising connection  to the genus 1 Gromov-Witten theory of the Hilbert scheme of points of $\mathbb{C}^2$  proven in \cite{ILPT}.

\vspace{8pt} \noindent {\underline{Torelli cycles:}}
A natural source of principally polarized abelian varieties comes from the geometry of curves. A connected nodal curve $C$ is said to be of \emph{compact type} if the dual graph is a stable tree. The moduli space of genus $g$ curves of compact type 
$$\M_g^{\mathrm{ct}} \subset \overline{\M}_g$$
is a nonsingular Deligne-Mumford stack of dimension $3g-3$. The Torelli morphism
\begin{equation}\label{eq:torelli_map}
    \Tor : \M_g^{\ct} \rightarrow \A_g
\end{equation}
sends a compact type curve $C$ to the product of the Jacobians of the irreducible components of $C$, each equipped with the canonical principal polarization given by the corresponding theta divisor. The Torelli morphism \eqref{eq:torelli_map} is proper, and the associated pushforward of the fundamental class is the \emph{Torelli cycle},
$$\Tor_*([\M_g^{\mathrm{ct}}])=[\mathcal J_g] \in \CH^{\binom{g-2}{2}}(\A_g)\,.$$

Faber observed{\footnote{Faber's calculations, undertaken before the complete definition  of $\taut$ was available, can now be interpreted as computing $\taut([\J_g])$.}} that the calculation of 
$\taut([\mathcal{J}_g])\in \R^{\binom{g-2}{2}}(\A_g) $
is equivalent to 
the calculation of
all Hodge integrals of the form
$$\int_{\overline{\M}_g}\lambda_g\cdot \mathsf{P}(\lambda_1,\ldots, \lambda_g)\, ,$$
where $\mathsf{P}$
is an arbitrary polynomial of degree $2g-3$ in the Chern classes of the Hodge bundle \cite{Fa}.

For $g\leq 8$, we  have
\[
\taut(\left[\mathcal J_2\right]) = 1\,,
\]
\[
\taut(\left[\mathcal J_3\right]) = 2\,,
\]
\[
\taut(\left[\mathcal J_4\right]) = 16\lambda_1\,,
\]
\[
\taut(\left[\mathcal J_5\right]) = 144\lambda_1\lambda_2 - 96\lambda_3\,,
\]
\[
\taut(\left[\mathcal J_6\right]) = 768\lambda_1\lambda_2\lambda_3-2304\lambda_2\lambda_4+\frac{948096 }{691}\lambda_1\lambda_5\,,
\]
\[
\taut(\left[\mathcal J_7\right])=1536\lambda_1\lambda_2\lambda_3\lambda_4-13824\lambda_2\lambda_3\lambda_5+\frac{4418304}{691}\lambda_1\lambda_4\lambda_5+\frac{15044352}{691}\lambda_1\lambda_3\lambda_6-\frac{17685504}{691}\lambda_4\lambda_6\,,
\]
\begin{align*}
\taut(\left[\mathcal J_8\right])=& \ \  \ \, \frac{500106387456}{2499347}\lambda_2\lambda_6\lambda_7-\frac{311646117888}{2499347}\lambda_3\lambda_5\lambda_7-\frac{203316609024}{2499347}\lambda_1\lambda_2\lambda_5\lambda_7
\\ & +\frac{139564449792}{2499347}\lambda_1\lambda_3\lambda_4\lambda_7
-\frac{14966784}{691}\lambda_4\lambda_5\lambda_6+ \frac{25731072}{691}\lambda_1\lambda_3\lambda_5\lambda_6\\ & -\frac{12533760}{691}\lambda_2\lambda_3\lambda_4\lambda_6 + \frac{552960}{691}\lambda_1\lambda_2\lambda_3\lambda_4\lambda_5\, .
\end{align*}
Formulas for $\taut([\mathcal{J}_g])$ for $g\leq 11$ are available at the link \cite{hodgedata}.

Since Faber's seminal paper
\cite{Fa}, many closed formulas for Hodge integrals have been found (especially when the
integrand includes the top Chern class $\lambda_g$), see \cite{FP1, FP2, FP4}. However, at present, a closed formula for
$\taut(\left[\mathcal{J}_g\right])$ is not known.

\subsection{The homomorphism property}
Tautological projection is constructed as a $\mathbb{Q}$-linear map.    
The following property was introduced in \cite{Iribar} to study the
relationship of $\taut$ with the multiplicative  structure of $\CH^*(\A_g)$.

\begin{definition}[Iribar L\'opez \hspace{-0.5pt}\cite{Iribar}] \label{d3333}
    Let  $\alpha$, $\beta \in \CH^*(\A_g)$.  The pair $(\alpha, \beta)$ satisfies the \emph{homomorphism property} if
    \begin{equation} \label{eq:hom_prop}
    \taut(\alpha \cdot \beta) = \taut(\alpha) \cdot \taut(\beta) \in \R^*(\A_g)\,. \qedhere
    \end{equation} 
\end{definition}

By equation \eqref{tss}, if either $\alpha\in \R^*(\A_g)$ or $\beta\in \R^*(\A_g)$, then the homomorphism property holds for $(\alpha,\beta)$.
Failure of the homomorphism property is therefore an obstruction to being tautological.

The homomorphism property has been recently proven to hold 
in
the following nontrivial cases:
\begin{itemize}
\item[(i)] If {both} $\alpha$ and $\beta$ are supported on the locus in $\A_g$ 
of non-simple abelian varieties, then the homomorphism
property holds for $(\alpha, \beta)$, see \cite{Iribar}.
    \item[(ii)] The homomorphism property holds for the pair $([\mathcal J_g],[\NL_{g,d}])$ for all $g,d \geq 1$ by a geometric calculation of both sides of the equality 
    \eqref{eq:hom_prop}, see \cite{ILPT}.
    \item[(iii)] If $\beta$ is supported on the Noether-Lefschetz locus of abelian varieties that decompose (up to isogeny) as a product of $m>0$ elliptic curves and a $(g-m)$-dimensional abelian variety, then the homomorphism property holds for $(\alpha,\beta)$ for all $\alpha \in \CH^*(\A_g)$, see \cite{aitordenisjeremy}.
    In particular, the homomorphism property holds for $(\alpha,[\A_1 \times \cdots \times \A_1 \times \A_{g-m}])$.
\end{itemize} 
In case (ii), the homomorphism property holds even though
$[\mathrm{NL}_{g,d}]\in \CH^*(\A_g)$ is known to be non-tautological in many cases 
\cite{COP,Iribar}.

We study here the homomorphism property for pairs of the form $$(\left[\mathcal J_g\right], [\mathcal A_{g_1} \times \cdots \times \mathcal A_{g_k}])\, .$$ 
The main geometric difficulty is the calculation of the pullback
\begin{equation}
\label{lee4}
\Tor^*([\mathcal A_{g_1}\times \cdots \times \mathcal A_{g_k}])\in \CH^*(\M^{\mathrm{ct}}_{g})\, .
\end{equation}
We use two different approaches to compute \eqref{lee4}.
The first is
a direct study of the excess intersection geometry of the fiber product of the Torelli and product morphisms following \cite{COP}. The second approach is by recasting the geometry in terms of families
Gromov-Witten classes, which are computed by a wall-crossing formula related to unramified maps \cite{Nesterov,DenisMax}.
\subsection{The Torelli fiber product}\label{tfp}

Let $\replacemu = (g_1, \ldots , g_k)$ be a partition of $g$. We study the fiber product
\[
\begin{tikzcd}
\mathcal Z \arrow[r] \arrow[d] & \mathcal{M}_g^{\ct} \arrow[d, "\Tor"] \\
\mathcal{A}_{g_1} \times \cdots \times \mathcal{A}_{g_k} \arrow[r] & \ \mathcal{A}_g\,
\end{tikzcd}
\]
in Sections \ref{fptm}
and \ref{exccalc}.
The stack $\mathcal Z$ parametrizes curves of compact type with decomposable Jacobians. Our first result provides a complete description of the fiber product. For the
intersection calculation \eqref{lee4}, an understanding of all intersections
of the irreducible components of $\mathcal{Z}$ is required. In case $\replacemu=(1,g-1)$, a parallel result was proven in \cite{COP}.

\begin{thm}\label{th:fiberprod_description}
    There is a natural stratification of $\mathcal Z$ by nonsingular Deligne-Mumford stacks indexed by stable trees $\mathsf T$ with a genus assignment $$\mathsf g: \mathsf {V}(\mathsf T) \rightarrow \mathbb Z_{\geq 0}$$ and a coloring of the vertices of positive genus
    $$\mathsf c: \mathsf{V}(\mathsf T)_{>0} \rightarrow \{1, \ldots , k\}$$ subject to the following two conditions:
    \begin{itemize}
        \item [\textnormal{(i)}] For each $i=1, \ldots ,k$, $\sum_{v\in {\mathsf c}^{-1}(i)} {\mathsf g}(v)=g_i$. 
        \item [\textnormal{(ii)}] For each edge $e$ of $\mathsf T$, there is at least one path containing $e$ with endpoints given by two vertices of positive genus of
 {\em different} colors and all interior vertices of genus $0$.
    \end{itemize}
    For each $(\mathsf T, \mathsf g, \mathsf c)$, there is a morphism $$
    \mathcal M_{\mathsf T}^{\mathsf {ct}} = \prod_{v \in \mathsf {V}(\mathsf T)} \M_{\mathsf g(v), \mathsf n(v)}^{\ct}\, \to \mathcal Z
    . $$
 If $\mathsf T$ has no vertices of genus $0$, then $\M_{\mathsf T}^{\mathsf {ct}}$ corresponds to an irreducible component of $\mathcal{Z}$. Trees with genus $0$ vertices correspond to strata of the intersections of the irreducible components.
\end{thm}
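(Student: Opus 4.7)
My plan is to combine a pointwise analysis of $\mathcal Z$, using the uniqueness of polarized decompositions of PPAVs, with a careful study of closure relations between the strata labeled by triples $(\mathsf T, \mathsf g, \mathsf c)$. A $K$-point of $\mathcal Z$ is a compact-type curve $C/K$ of genus $g$ together with a PPAV isomorphism $\Jac(C) \cong (A_1, \theta_1) \times \cdots \times (A_k, \theta_k)$ with $(A_i, \theta_i) \in \A_{g_i}(K)$. The first ingredient is the canonical decomposition
\[
\Jac(C) = \prod_v \Jac(C_v)
\]
as a product of PPAVs over the positive-genus vertices $v$ of the dual graph $\mathsf T_C$. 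At a generic point of any stratum of $\M_g^{\ct}$, each factor $\Jac(C_v)$ is simple, and the Krull--Schmidt principle for PPAVs (uniqueness up to permutation of the decomposition into indecomposable polarized factors) forces the given isomorphism to $\prod_i A_i$ to be a regrouping of the $\Jac(C_v)$ into $k$ bins. This regrouping is exactly a coloring $\mathsf c$ of the positive-genus vertices, and condition (i) is automatic by matching dimensions.

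Conversely, for any $(\mathsf T, \mathsf g, \mathsf c)$ satisfying (i), the standard gluing construction assembles a universal compact-type curve over $\M_{\mathsf T}^{\ct} = \prod_v \M_{\mathsf g(v), \mathsf n(v)}^{\ct}$; grouping its Jacobian according to $\mathsf c$ then defines the morphism $\M_{\mathsf T}^{\ct} \to \mathcal Z$ of the theorem. Each $\M_{\mathsf T}^{\ct}$ is nonsingular Deligne--Mumford (stability at each vertex is built in), and the natural stratification of $\mathcal Z$ is the one induced by the $(\mathsf T, \mathsf g, \mathsf c)$ extracted from a generic point.

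The substantive content is condition (ii). For \emph{necessity}, suppose an edge $e$ of $(\mathsf T, \mathsf g, \mathsf c)$ violates (ii): on each side of $e$, every positive-genus vertex reachable via genus-$0$ interior paths shares a single common color $i$. Contracting $e$ and merging its two endpoints into a single vertex of combined genus and color $i$ produces $(\mathsf T', \mathsf g', \mathsf c')$ still satisfying (i), with $\M_{\mathsf T'}^{\ct} \to \mathcal Z$ hitting a strictly larger stratum that contains the image of $\M_{\mathsf T}^{\ct}$ as a specialization; hence the original is not an irreducible component. Iterating on edges, every stratum is contained in one satisfying (ii), and the dimension count (each edge of $\mathsf T$ contributes codimension one in $\M_g^{\ct}$) shows that the maximal strata are those with no genus-$0$ vertices. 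For \emph{sufficiency}, when $\mathsf T$ has only positive-genus vertices and satisfies (ii) (equivalently, adjacent vertices carry distinct colors), $\M_{\mathsf T}^{\ct} \to \mathcal Z$ is generically finite onto its image, which is therefore an irreducible component. Trees with genus-$0$ vertices satisfying (ii) arise as intersections of several irreducible components, each genus-$0$ vertex corresponding to a rational degeneration of a positive-genus vertex that appears in the closures of several components simultaneously.

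The main obstacle I anticipate is the careful bookkeeping for condition (ii). One must verify that the edge-contraction procedure above is well-posed---in particular, that the PPAV decomposition at the generic point of $(\mathsf T', \mathsf g', \mathsf c')$ really does specialize to the PPAV decomposition at the generic point of $(\mathsf T, \mathsf g, \mathsf c)$---and handle simultaneous violations of (ii) at multiple edges by an induction on the number of edges. A related point is showing that distinct labeled trees satisfying (ii) index genuinely distinct strata, which ultimately reduces to uniqueness of the PPAV decomposition combined with the fact that different dual graphs produce different topological types of compact-type curves.
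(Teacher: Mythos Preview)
Your proposal is correct and takes essentially the same approach as the paper: both extract the coloring from the unique PPAV decomposition of $\Jac(C)$ (the paper isolates this as a corollary of the Krull--Schmidt result from \cite{CG}), construct the morphisms $\M_{\mathsf T}^{\ct}\to\mathcal Z$ by grouping Jacobian factors according to $\mathsf c$, and then analyze closure relations via edge contractions (what the paper formalizes as $\mathsf T'$-structures and smoothings in its Definition~\ref{defdef}).

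One small imprecision worth flagging: your ``dimension count'' does not by itself show that trees with no genus-$0$ vertices give irreducible components, since $\mathcal Z$ is not equidimensional (for example, for $\replacemu=(1,2)$ the components have dimensions $5$ and $4$). The argument you actually need---and which you have elsewhere in your sketch---is that irreducible trees admit no nontrivial smoothings (adjacent vertices carry distinct colors, so no edge can be contracted compatibly with $\mathsf c$), while every colored tree satisfying (i) can be successively contracted to an irreducible one, so the images of $\M_{\mathsf I}^{\ct}$ for irreducible $\mathsf I$ cover $\mathcal Z$ and none is contained in another.
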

We will refer to the trees 
in Theorem \ref{th:fiberprod_description} as {\it $\replacemu$-colored extremal trees}.
Let $\mathsf{Tree} (\replacemu)$ be the set of $\replacemu$-colored extremal trees. 
An automorphism of $\mathsf{T}\in \mathsf{Tree}(\replacemu)$ 
is an automorphism of the underlying tree that respects both the genus assignment and the coloring. Let $\mathsf{Aut}(\T)$ be the automorphism group.

\begin{thm}[Excess contributions]\label{thm:contributions}
    We have $$\Tor^*([\mathcal A_{g_1} \times\cdots \times \mathcal A_{g_k}])=
    \sum_{\mathsf T \in \mathsf{Tree}(\replacemu)}
    \frac{1}{|\mathsf{Aut}(\mathsf{T})|}
    \left(\xi_{\mathsf T}\right)_* (\mathsf{Cont}_{\mathsf T})\, \in \R^*(\M_g^{\ct}) ,
    $$
    where $\mathsf{Cont}_{\mathsf T}\in \R^*(\M_{\mathsf T}^{\mathsf{ct}})$ is a recursively computed tautological class, and $$\xi_{\mathsf T} : \M_{\mathsf T}^{\mathsf{ct}} \rightarrow \M_g^{\ct}$$ is the gluing map.
\end{thm}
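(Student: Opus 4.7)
The starting point is the refined Gysin formalism. The product embedding
\[
\iota:\mathcal A_{g_1}\times\cdots\times\mathcal A_{g_k}\hookrightarrow\mathcal A_g
\]
is a regular embedding of nonsingular Deligne-Mumford stacks of codimension $\sum_{i<j}g_ig_j$, so it carries a refined Gysin pullback $\iota^!$. Applied to the Cartesian square of Section~\ref{tfp}, this yields
\[
\Tor^*[\mathcal A_{g_1}\times\cdots\times\mathcal A_{g_k}]=p_*\,\iota^![\mathcal M_g^{\ct}]\,,
\]
where $p:\mathcal Z\to\mathcal M_g^{\ct}$ is the projection. Because $\mathcal Z$ typically has larger than expected dimension, $\iota^![\mathcal M_g^{\ct}]$ is not $[\mathcal Z]$ but rather an excess class localized on the strata of $\mathcal Z$ described in Theorem~\ref{th:fiberprod_description}. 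The plan is to decompose this class into contributions supported on the images of the $\xi_{\mathsf T}$ and to identify each with the top Chern class of an appropriate excess bundle.

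The essential input is a description of the normal bundle of $\iota$. Using the isomorphism $T_{\mathcal A_g}\cong\Sym^2\mathbb E^\vee$ together with the splitting $\mathbb E\cong\bigoplus_i\mathbb E_i$ along the image of $\iota$, one obtains
\[
N_\iota\;\cong\;\bigoplus_{i<j}\mathbb E_i^\vee\otimes\mathbb E_j^\vee\,.
\]
For a tree $\mathsf T\in\mathsf{Tree}(\replacemu)$ without genus-$0$ vertices, the image $\xi_{\mathsf T}(\mathcal M_{\mathsf T}^{\ct})$ is an irreducible component of $\mathcal Z$ embedded in $\mathcal M_g^{\ct}$ of codimension equal to the number of edges of $\mathsf T$. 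Its normal bundle is the standard sum of line bundles built from the $\psi$-classes at the two sides of each edge; comparing this with $\xi_{\mathsf T}^*\Tor^*N_\iota$ produces an excess bundle $\mathcal E_{\mathsf T}$ on $\mathcal M_{\mathsf T}^{\ct}$, and a local computation of $\iota^!$ identifies the contribution of this stratum as $c_{\mathrm{top}}(\mathcal E_{\mathsf T})$, a polynomial in $\lambda$- and $\psi$-classes.

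For trees $\mathsf T$ containing genus-$0$ vertices, $\xi_{\mathsf T}(\mathcal M_{\mathsf T}^{\ct})$ sits in the intersection of several top-dimensional components, and the contribution is computed by induction on the number of genus-$0$ vertices. Contracting such a vertex produces a tree $\mathsf T'$ of smaller size, and excess intersection along the Cartesian square comparing $\mathcal M_{\mathsf T}^{\ct}$ to $\mathcal M_{\mathsf T'}^{\ct}$ expresses $\mathsf{Cont}_{\mathsf T}$ in terms of $\mathsf{Cont}_{\mathsf T'}$, tautological classes on the small factor associated to the contracted vertex, and $\psi$-classes at the newly created edges. The recursion terminates and visibly preserves tautologicality. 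The factor $1/|\mathsf{Aut}(\mathsf T)|$ is the usual pushforward correction from the quotient $[\mathcal M_{\mathsf T}^{\ct}/\mathsf{Aut}(\mathsf T)]$, and summing over $\mathsf{Tree}(\replacemu)$ yields the stated formula.

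The main obstacle is the precise identification of $\mathcal E_{\mathsf T}$ along the deeper boundary strata: the Torelli morphism fails to be an immersion there, since the node positions on a compact-type curve do not enter into the product Jacobian. The standard comparison of normal bundles must therefore be refined using the deformation theory of nodal compact-type curves, and it is exactly this refinement that drives the recursive structure of the contributions coming from trees with genus-$0$ vertices.
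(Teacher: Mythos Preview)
Your outline follows the same overall strategy as the paper---refined Gysin pullback, stratification of $\mathcal Z$ by $\replacemu$-colored extremal trees, excess bundles built from $N_\iota\cong\bigoplus_{i<j}\mathbb E_i^\vee\otimes\mathbb E_j^\vee$ versus the $\psi$-classes at the nodes, and recursion from irreducible trees to trees with genus-$0$ vertices. But the proposal is not yet a proof: the crucial technical input is missing, and you yourself flag this in your final paragraph without resolving it.

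The paper's argument rests on Drakengren's Theorem~\ref{prop:localequations}: the fiber product $\mathcal Z$ is \emph{reduced}, and near any point in the strict stratum indexed by $\mathsf T$ its local equations in $\prod_{e\in\mathsf E(\mathsf T)}\mathbb C_e$ are exactly the monomials $\mathsf{Mon}(\mathsf P_{\mathrm{crit}})$ attached to the critical paths of $\mathsf T$. Reducedness is what allows one to replace the abstract residual intersection problem by a concrete local model: a section of a rank $d$ bundle $\mathcal N$ on affine space whose zero locus is cut out by these monomials, so that $c_d(\mathcal N)$ has a completely explicit expansion and the recursion
\[
\mathsf{Cont}_{\mathsf T}\cdot\prod_{e\in\mathsf E(\mathsf T)}z_e \;=\; c_d(\mathcal N)\;-\;\sum_{\mathsf T'}\iota_{\mathsf T'*}\mathsf{Cont}_{\mathsf T'}
\]
(sum over \emph{all} smoothings $\mathsf T'$ of $\mathsf T$, not just one-vertex contractions) determines $\mathsf{Cont}_{\mathsf T}$ as a universal polynomial in $z_e$ and $c_i(\mathcal N)$. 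Only then does the substitution $z_e\mapsto -\psi'_e-\psi''_e$ and $c(\mathcal N)\mapsto c(\bigoplus_{i<j}\mathbb E_i^\vee\otimes\mathbb E_j^\vee)$ produce a tautological class. Without reducedness and the explicit local equations, there is no mechanism to identify the ``excess bundle $\mathcal E_{\mathsf T}$'' along the deeper strata or to verify that the recursion closes. (Two smaller points: the product map $\iota$ is not a global embedding, only unramified, so one works locally analytically; and your recursion over single vertex contractions is coarser than the paper's recursion over all smoothings.)
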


\noindent The definitions of the tautological rings
$\R^*(\M_{\mathsf{T}}^{\ct})$ and $\R^*(\M_g^{\ct})$ will be reviewed in Section \ref{frfr4}.
Theorem \ref{thm:contributions} extends \cite [Theorem 3]{COP} from the case $\replacemu=(1,g-1)$ to arbitrary partitions $\replacemu$.

A key step in the proof of Theorem \ref{thm:contributions} is the reducedness  of the fiber product $\mathcal Z$ proven by
Drakengren \cite{Drakengren}. Reducedness is crucial here: we can then write local equations for $\mathcal{Z}$ and compute the pullback class via excess intersection theory \cite{Fulton} on the strata of $\mathcal{M}_g^{\ct}$. The result is a graph sum formula expressing the pullback in terms of tautological classes on the moduli of curves.

Reducedness of $\mathcal{Z}$ is a fortunate property. For example, Drakengren \cite{DrakengrenTorSq} has proven that the Torelli square in genus $4$,
\[
\begin{tikzcd}
\mathcal F \arrow[r] \arrow[d] & \mathcal{M}_4^{\ct} \arrow[d, "\Tor"] \\
\mathcal{M}_4^{\ct} \arrow[r, "\Tor"] & \ \mathcal{A}_4\, ,
\end{tikzcd}
\]
has a {\em nonreduced} fiber product $\mathcal{F}$.

\subsection{Wall-crossing}\label{wccc}
The wall-crossing method can be used to calculate 
$$\Tor^*([\mathcal A_{g_1}\times \cdots \times \A_{g_k}])\in \CH^*(\M^{\ct}_g)\, .$$
The calculation of $\Tor^*([\mathcal A_{1}\times \A_{g-1}])$ by wall-crossing was carried out in \cite{DenisMax}. The wall-crossing geometry pursued here  for 
$\Tor^*([\mathcal A_{r}\times \A_{g-r}])$ is more subtle. 

Let $1\leq r \leq g-1$, and let $$\pi_r : \mathcal X_r \rightarrow \A_r$$ be the universal family of principally polarized abelian $r$-folds. Let $$\beta_{\mathrm{min}} = \frac{\theta^{r-1}}{(r-1)!}$$ be the minimal class, and let $$
\M_g^{\ct}(\pi_r)=
\M^{\ct}_{g}(\pi_r, \beta_{\text{min}})/\mathcal X_r$$ be the space of stable maps to the fibers of $\pi_r$ whose domain is a curve of compact type and whose image lies in the homology class $\beta_{\text{min}}$, up to translations of the fibers.

The Jacobian of a curve $C$ of compact type splits as a product of principally polarized abelian varieties of dimension $r$ and $g-r$,
$$\mathsf{Jac(C)} = {X}_r \times {Y}_{g-r}\, ,$$
if and only if there is a stable map $C \to X_r$ that represents the minimal class. The correspondence extends to families, so we obtain an isomorphism of stacks
\begin{equation}\label{eq:isom_stablemaps}
    \mathcal Z \cong \M^{\ct}_{g}(\pi_r)\,, \ \ \ \ \ \ \ 
\begin{tikzcd}
\mathcal Z \arrow[r] \arrow[d] & \mathcal{M}_g^{\ct} \arrow[d, "\Tor"] \\
\mathcal{A}_{r} \times  \mathcal{A}_{g-r} \arrow[r] & \ \mathcal{A}_g\,.
\end{tikzcd}
\end{equation} When $r=1$, the above isomorphism is explained in \cite [Proposition 21]{COP}. For higher $r$, the argument is parallel.

The space of stable maps $\M^{\ct}_{g}(\pi_r)$ carries a 2-term perfect obstruction theory and a virtual fundamental class. Since an $r$-dimensional abelian variety has $h^{0,2}=\binom{r}{2}$, the virtual fundamental class can be reduced $h^{0,2}$ times, yielding a reduced{\footnote{See \cite{BryanOberPY} for a study of the reduced virtual fundamental class of the moduli space of stable maps to an abelian variety.}} 
virtual fundamental class $[\M^{\ct}_{g}(\pi_r)]^{\text{red}}$. By a result of \cite {aitordenisjeremy} extending  \cite[Theorem 1.2]{GreerLian},  there is an equality of cycles
\begin{equation} \label{ks33}
\Tor^![\mathcal A_r \times \mathcal A_{g-r}] = [\M^{\ct}_{g}(\pi_r)]^{\mathrm{red}}\, \in \CH^*(\M_g^{\ct}(\pi_r))
\end{equation}
under the isomorphism \eqref{eq:isom_stablemaps}.

The wall-crossing formula established in \cite[Theorem 6.21]{Nesterov} relates the virtual fundamental classes of  moduli spaces of stable  and unramified maps. Let
\[
\M^{\ct,\mathrm{un}}_{g_0}(\pi_r)=\M_{g_0}^{\ct, \mathrm{un}}(\pi_r , \beta_{\mathrm{min}})/\mathcal X_r\, 
\]
be the analogously defined  space of unramified maps to the fibers of $\pi_r$. More generally, for the statement of the wall-crossing formula,  spaces of unramified  maps from possibly disconnected curves and with fixed multiplicities over moving points on the target are required{\footnote{We use the superscript $\circ$ for 
connected domains and  the superscript
$\bullet$ for possibly disconnected domains. If no superscript is given, then the
domains are understood to be connected.
}},
\[
\M^{\ct,\mathrm{un}}_{g_0}(\pi_r,\mu^1,\hdots,\mu^m)^\bullet,
\]
where $\mu^i=(\mu^i_1, \hdots, \mu^i_\ell)$ are the partitions that encode the multiplicities.  
Unramified maps and unramified maps with prescribed ramifications are higher dimensional generalizations of admissible covers, see \cite{KKO, Nesterov}. The precise definitions are reviewed in Section \ref{wallcr}, and an expository treatment can be found in \cite{13andahalf}.

Over the moduli space of unramified maps, before taking the quotient by translations of the target, derivatives of maps at points associated with multiplicity profiles $\mu^i$ provide evaluation morphisms to $\p(T_{\mathcal X_r/\mathcal A_r})^{\ell(\mu^i)}$, see \cite{KKO} for details. 
After taking the quotient, we obtain evaluation morphisms to $\mathbb P(\mathbb E^\vee_r)$, the tangent space at the zero section $\mathsf{z}$:
\[ 
\mathrm{ev}_i \colon 
\M^{\ct,\mathrm{un}}_{g_0}(\pi_r,\mu^1,\hdots, \mu^m)^\bullet \rightarrow \p(\mathbb E^\vee_r)^{\ell(\mu^i)}, \quad i=1, \hdots, m\, . 
\]
The classes responsible for the difference between the enumerative
geometry of stable and unramified maps live on the moduli spaces
$$\M^{\mathrm{ct}}_{h,\,\ell(\mu)} \times \p(\mathbb E^\vee_r)^{\ell(\mu)}\, ,$$
and take the form
\begin{equation}\label{ifun}
I_{h,\mu}(z) = \left.\frac{1}{|\Aut(\mu)|}\,  \frac{\prod^{r}_{j=1}\Lambda^\vee(z-\alpha_j)}{\prod^{\ell(\mu)}_{k=1}(z-\psi_k-H_k)}\right|_{z^{\geq 0}}\in \CH^*\left(\M^{\mathrm{ct}}_{h,\,\ell(\mu)} \times \p(\mathbb E^\vee_r)^{\ell(\mu)}\right)[z]\, ,
\end{equation}
where $\Lambda^\vee(t)= \sum_{i=0}^h c_{i}(\mathbb{E}^\vee_h)t^{h-i},$
$\alpha_j$ are Chern roots of $\mathbb E_r$, $H_k$ are the hyperplane classes on the factors of $\p(\mathbb E^\vee_r)^{\ell(\mu)}$, and $\psi_k$ are the cotangent classes on  $\M^{\mathrm{ct}}_{h,\, \ell(\mu)}$. The following result is established in \cite {aitordenisjeremy}.

\begin{thm}[Wall-crossing] \label{thm:wallcr} For $1\leq r \leq g-1$, we have
\[
[\M^{\ct}_{g}(\pi_r)]^{\textnormal{red}}= \sum_{\mathsf{S}\in \mathsf{Star}(g)}\frac{1}{|\Aut(\mathsf{S})|} \, (\xi_{\mathsf{S}})_* \left([\M^{\ct,\mathrm{un}}_{g_0}(\pi_r,\mu^1,\hdots, \mu^m)^\bullet]^{\textnormal{red}}\boxtimes \prod^m_{i=1} \mathrm{ev}^*_i I_{g_i,\mu^i}(-\Psi_i)\right),
\]
where the sum is taken over star-shaped graphs $\mathsf{S}$ with partition labels $\mu^1, \ldots, \mu^m$ on the edges that correspond to connected domains after gluing and satisfy
\[
g=\sum^{m}_{i=1}(g_i+\ell(\mu^{i}))+ g_0-m\, . 
\]
The $\Psi_i$ are the cotangent classes on $\M^{\ct,\mathrm{un}}_{g_0}(\pi_r,\mu^1,\hdots, \mu^m)^\bullet$ associated to the markings of the target. The morphism 
\[
\xi_{\mathsf{S}} \colon \M^{\ct,\mathrm{un}}_{g_0}(\pi_r,\mu^1,\hdots, \mu^m)^\bullet \times \prod^m_{i=1} \M^{\mathrm{ct}}_{g_i,\ell(\mu^i)} \rightarrow \M^{\ct}_{g}(\pi_r) 
\]
is given by applying the forgetful morphism 
$$\M^{\ct,\mathrm{un}}_{g_0}(\pi_r,\mu^1,\hdots, \mu^m)^\bullet \rightarrow  \M^{\ct}_{g_0,\sum \ell(\mu^i)}(\pi_r)^\bullet$$ followed by the gluing morphism which attaches contracted components. 
\end{thm}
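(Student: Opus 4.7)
The plan is to deduce the wall-crossing identity from Nesterov's general theorem \cite[Theorem 6.21]{Nesterov}, which expresses the virtual class of a space of stable maps to a smooth target as a graph-sum of virtual classes of unramified map spaces corrected by explicit insertions on the contracted components. I would apply this theorem in the relative setting over the universal family $\pi_r\colon \mathcal X_r \to \A_r$, then descend along the fiberwise translation quotient, and finally pass from virtual to reduced classes.

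First, I would apply Nesterov's formula to the smooth relative target $\pi_r$. A point of $\M^{\ct}_g(\pi_r,\beta_{\min})$ not already lying in $\M^{\ct,\mathrm{un}}_{g}(\pi_r,\beta_{\min})$ has a dual graph of the following shape: a central subcurve of some genus $g_0$ maps unramifiedly and non-constantly to a fiber, and each contracted outer subcurve of genus $g_i$ is attached to the central piece along $\ell(\mu^i)$ nodes whose local multiplicities are recorded by $\mu^i$. These are exactly the star-shaped graphs $\mathsf S$ of the statement, and the genus identity $g = g_0 - m + \sum_i (g_i + \ell(\mu^i))$ is the dual graph genus formula. Nesterov's universal local contribution of a contracted vertex of genus $h$ with profile $\mu$ is the class $I_{h,\mu}(z)$ of \eqref{ifun}: the factor $\Lambda^\vee(z - \alpha_j)$ comes from the Euler class of the pullback cotangent bundle on the contracted component; the denominator comes from smoothing parameters and target cotangent classes at nodes; and the substitution $z \mapsto -\Psi_i$ identifies the formal equivariant parameter with the target $\psi$-class.

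Second, I would descend to the quotient by fiberwise translations. The canonical isomorphism $T_{\mathcal X_r/\A_r}\cong \pi_r^*\mathbb E_r^\vee$ identifies $\p(T_{\mathcal X_r/\A_r})$ with the pullback of $\p(\mathbb E_r^\vee)$ from $\A_r$, so each evaluation $\mathrm{ev}_i$ descends to a map to $\p(\mathbb E_r^\vee)^{\ell(\mu^i)}$, and the Chern roots $\alpha_j$ of $\mathbb E_r$ appear in \eqref{ifun} as required. Because translations act trivially on contracted components and commute with the gluing morphism $\xi_{\mathsf S}$, the graph sum descends termwise. Third, I would pass from virtual to reduced classes: the $\binom{r}{2}$ reductions come from the $h^{0,2}$-dimensional obstruction on the unramified factor alone, so they can be performed on that factor of each summand and leave the insertion classes $I_{g_i,\mu^i}(-\Psi_i)$ unchanged.

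\textbf{The main obstacle} is the third step: one must verify that the reduction procedure commutes with the star-shaped decomposition — equivalently, that the cosection used to cut the obstruction theory restricts compatibly along each gluing morphism $\xi_{\mathsf S}$ and does not pick up contributions from contracted components. A more bookkeeping-type difficulty is the careful matching of automorphism factors $|\Aut(\mathsf S)|$ and edge/leg orderings between Nesterov's original formulation and the statement above, especially after passing to the translation quotient.
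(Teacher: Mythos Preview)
The paper does not contain a proof of this theorem. Theorem~\ref{thm:wallcr} is stated in the introduction with the attribution ``The following result is established in \cite{aitordenisjeremy}'' and is used as a black box throughout Section~\ref{wallcr}; no argument for it appears anywhere in the present paper. So there is no proof here to compare your proposal against.

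Your outline --- specialize Nesterov's general wall-crossing \cite[Theorem~6.21]{Nesterov} to the relative target $\pi_r$, descend along the translation quotient, and pass from virtual to reduced classes --- is a reasonable sketch of what the cited paper \cite{aitordenisjeremy} presumably does, and the paper's surrounding discussion (the paragraph before the theorem, the identification \eqref{ks33}, and the remark that the formula ``established in \cite[Theorem~6.21]{Nesterov} relates the virtual fundamental classes of moduli spaces of stable and unramified maps'') is consistent with that strategy. Your flagged obstacle about compatibility of the reduction/cosection with the graph decomposition is genuine and is exactly the kind of point that would need to be handled in \cite{aitordenisjeremy}, but the present paper offers no guidance on it.
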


A formal definition of star-shaped graphs $\mathsf{S}$ and
the cotangent line classes $\Psi_i$ will be given in Section \ref{wallcr}.
The complexity of the moduli spaces of unramified maps which appear in the wall-crossing formula
depends upon $r$:

\vspace{6pt}
\noindent $\bullet$ For $r=1$, the only moduli spaces of unramified maps which occur in the wall-crossing formula are 
\[
\M^{\ct,\mathrm{un}}_{1}(\pi_1,(1),\hdots,(1))^\circ,
\]
where $\mu^i=(1)$ for all $i$, see \cite{DenisMax}.

\vspace{6pt}
\noindent $\bullet$ For $r=2$, the only moduli spaces of unramified maps which occur  are
\begin{equation}\label{eq:M2ctun}
    \M^{\ct,\mathrm{un}}_{2}(\pi_2,(1),\hdots, (1))^\circ \ \ \ \text{and} \ \  \
\M^{\ct,\mathrm{un}}_{1}(\pi_2,(1,1),(1),\hdots, (1))^\bullet\, ,
\end{equation}
The partition $\mu=(1,1)$ arises for unramified maps to 
fibers of $\pi_2$ given by  products $E_1\times E_2$ of elliptic curves: 
the associated domain curves are two disconnected genus 1 curves mapping to a nodal genus 2 curve consisting of two elliptic components. The point on the target corresponding to the node of the genus 2 curve has multiplicity $(1,1)$. 

Since curves in the minimal class $\beta_{\mathrm{min}}$ are at worst nodal, maps to an abelian surface can only have multiplicities $(1)$ and $(1,1)$. By admissibility, the curves have degree at most $2$ over rational components of Fulton-MacPherson degenerations of abelian surfaces. Degree $2$ curves in $\mathbb{P}^2$ are also at worst nodal. Hence, the same multiplicities $(1)$ and $(1,1)$ occur for maps to Fulton-MacPherson degenerations.

A result of \cite {aitordenisjeremy} exhibits the spaces \eqref{eq:M2ctun} as an 
iterated blowup of $\M^{\ct}_{2,n}$ and $\Mct_{1,n}$ along boundary strata
in both the connected and disconnected cases.
As a result, the moduli space is nonsingular of the expected dimension, and the reduced virtual fundamental class agrees with the fundamental class. Therefore, Theorem \ref{thm:wallcr} provides an explicit method to evaluate the wall-crossing formula in terms of tautological classes on the moduli spaces of curves. A full analysis is 
provided in Section \ref{wallcr} with examples.


\subsection{Results about the homomorphism property}
The codimension of $\mathcal{J}_g \subset \A_g$ is 
$$\mathsf{cod}_{\mathcal{J}} = \binom{g+1}{2}- (3g-3)\,.$$
The codimension of the product cycle 
$\mathcal A_{g_1}\times \cdots \times \A_{g_k}$ in $\A_g$ is
$$\mathsf{cod}_{\replacemu}=\binom{g+1}{2} - \sum_{i=1}^k \binom{g_i+1}{2}=\sum_{1\leq i<j\leq k} g_ig_j\, .$$
The homomorphism property
\begin{equation} \label{k3dd4}
\taut([\mathcal J_g]\cdot [\mathcal A_{g_1} \times \cdots \times \mathcal A_{g_k}]) \stackrel{?}{=} \taut([\mathcal J_g])\cdot \taut([\A_{g_1}\times \cdots \times \A_{g_k}]) \, \in \, \R^{\mathsf{cod}_{\mathcal{J}}+ \mathsf{cod}_{\replacemu}}   (\A_g)
\end{equation}
is trivial (and true) if
\begin{equation}\label{ineq}\mathsf{cod}_{\mathcal{J}}+ \mathsf{cod}_{\replacemu} > \binom{g}{2}\,.
\end{equation}
Simple arithmetic then shows that the validity of the homomorphism property \eqref{k3dd4} is a non-trivial question only in the following cases:
\begin{itemize}
    \item[(i)] $\A_1 \times \A_{g-1} \rightarrow \A_g$ ,
    \item[(ii)] $\A_2 \times \A_{g-2} \rightarrow \A_g$ ,
    \item[(iii)] $\A_3 \times \A_3 \rightarrow \A_6$ ,
    \item[(iv)] $\A_1\times \A_1 \times \A_{g-2} \rightarrow \A_g$ .
\end{itemize}
Case (i) was affirmatively answered by combining \cite[Theorem 4]{COP} and \cite [Theorem 7]{Iribar}, while case (iv) follows from the results of \cite{aitordenisjeremy}. Theorems \ref{thm:contributions} and \ref{thm:wallcr}   are suitable to cover cases (ii) and (iii).

We have implemented the computations described in Sections \ref{tfp} and \ref{wccc} in the programs \cite{code, codeFeusi}
which can be evaluated with the software  $\mathsf{admcycles}$ \cite{admcycles} designed
 for calculations in the tautological ring of the moduli spaces of curves. 
 The excess intersection calculations are discussed in Sections \ref{compimp} -- \ref{general}, and the wall-crossing calculations are discussed in Sections \ref{wc4} -- \ref{wch}.
 The outcome is the verification of the homomorphism property in the following new cases.

\begin{thm}\label{thm:A3A3_A2Ax}
    The homomorphism property holds for the pairs:
    \begin{itemize}
    \item[\emph{(ii)}] $([\mathcal J_g],\, [\A_2 \times \A_{g-2}])$ on $\mathcal A_g$ for all
    $2\leq g \leq 8\,,$
    \item[\emph{(iii)}]
    $([\mathcal J_6],\, [\A_3 \times \A_3])$ on $\mathcal A_6\,.$
    \end{itemize}
\end{thm}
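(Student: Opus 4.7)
The plan is to reduce \eqref{eq:hom_prop} in each case to a single numerical verification, and then to evaluate that number by two independent routes. A dimension count shows that the combined codimension $\mathsf{cod}_{\J}+\mathsf{cod}_{\replacemu}$ has complementary degree in $\R^*(\A_g)$ equal to $1$ for every pair in case (ii) with $2\le g\le 8$ and equal to $0$ for case (iii). Since the $\lambda_g$-pairing of Theorem \ref{thm:vdG_tautological} is perfect, $\R^{\binom{g}{2}-1}(\A_g)$ is one-dimensional. Thus the identity \eqref{eq:hom_prop} reduces to the single scalar equation
\[
\epsilon^{\mathrm{ab}}\bigl([\J_g]\cdot [\A_{g_1}\times\A_{g_2}]\cdot \delta\bigr) \;=\; \epsilon^{\mathrm{ab}}\bigl(\taut([\J_g])\cdot \taut([\A_{g_1}\times\A_{g_2}])\cdot \delta\bigr),
\]
with $\delta=\lambda_1$ (case ii) or $\delta=1$ (case iii).

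For the left-hand side, I would use the projection formula along the proper morphism $\Tor\colon \M_g^{\ct}\to \A_g$ together with Faber's observation to rewrite
\[
\epsilon^{\mathrm{ab}}\bigl([\J_g]\cdot [\A_{g_1}\times\A_{g_2}]\cdot \delta\bigr) \;=\; \int_{\overline{\M}_g}\lambda_g\cdot \Tor^*[\A_{g_1}\times\A_{g_2}]\cdot \Tor^*\delta,
\]
which is legitimate because $\lambda_g$ vanishes on $\overline{\M}_g\setminus\M_g^{\ct}$. The pullback $\Tor^*[\A_{g_1}\times\A_{g_2}]$ is computed as a tautological class on $\M_g^{\ct}$ by the two independent routes developed in the paper. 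The excess intersection route (Theorem \ref{thm:contributions}) expresses it as a sum, over $\replacemu$-colored extremal trees $\T$, of boundary pushforwards of the recursively defined classes $\mathsf{Cont}_{\T}$. The wall-crossing route (Theorem \ref{thm:wallcr}) writes it as a sum over star-shaped graphs involving the classes $I_{h,\mu}(z)$; for $r=2$ only the two moduli spaces in \eqref{eq:M2ctun}, each an iterated blow-up of $\M_{2,n}^{\ct}$ or $\M_{1,n}^{\ct}$, contribute, and for $(3,3)$ only partitions $\mu=(1,\dots,1)$ occur. Either route produces the same explicit element of $\R^*(\M_g^{\ct})$, and the resulting Hodge integral is evaluated in the $\mathsf{admcycles}$ package.

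The right-hand side is assembled symbolically: the factor $\taut([\J_g])$ is read off from the tables in Section \ref{gconag} (known up to $g\le 11$), while $\taut([\A_2\times\A_{g-2}])$ and $\taut([\A_3\times\A_3])$ are given by the closed formulas from \cite{CMOP} recalled in the same section. Their product, multiplied by $\delta$ and paired against $\lambda_g$, is again a polynomial Hodge integral that I would evaluate via $\mathsf{admcycles}$. The proof is then the case-by-case verification that the two rational numbers agree in each of the finitely many cases.

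The principal obstacle is computational complexity rather than conceptual. The set $\mathsf{Tree}(\replacemu)$ grows rapidly with $g$, so the excess intersection calculation for $g=7$ and $g=8$ requires evaluating $\mathsf{Cont}_{\T}$ on many boundary strata; the parallel wall-crossing calculation demands careful tracking of the classes $I_{h,\mu}$ through the blow-up geometry of \eqref{eq:M2ctun} and through each star-shaped topology. The heaviest single case is expected to be $([\J_8],[\A_2\times\A_6])$. The agreement of the two independent routes on the same rational answer, and the further agreement of that answer with the symbolic right-hand side, provides a strong internal consistency check; the full verification is carried out by the programs \cite{code, codeFeusi} built on $\mathsf{admcycles}$.
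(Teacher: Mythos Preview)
Your approach is correct and, at its core, the same as the paper's: both compute $\Tor^*[\A_{g_1}\times\A_{g_2}]\in\R^*(\M_g^{\ct})$ explicitly via Theorems~\ref{thm:contributions} and~\ref{thm:wallcr} and then carry out a finite verification in $\mathsf{admcycles}$. The paper organizes the endgame slightly differently: rather than reducing upfront to a single scalar pairing in $\R^*(\A_g)$, it checks the full class equality $\Tor^*[\A_{g_1}\times\A_{g_2}]=\Tor^*(\taut[\A_{g_1}\times\A_{g_2}])$ in $\R^*(\M_g^{\ct})$, then pushes forward along $\Tor$ and applies \eqref{tss} to conclude. The discussion immediately after the proof observes that, via the perfect pairing $\R^1(\M_g^{\ct})\times\R^{2g-4}(\M_g^{\ct})$ of \cite{CLS} and the vanishing of both sides against the boundary divisors, this class equality is in fact equivalent to a single numerical identity (there phrased with $\kappa_1$ rather than your $\lambda_1$), so your reduction and the paper's are equivalent. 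One small correction: for $\replacemu=(3,3)$ the wall-crossing analysis ($r\ge 3$) is explicitly not carried out in the paper, so only the excess intersection route is available for case~(iii); the two-route cross-check applies only to case~(ii).
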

\noindent 
The only difficulty in extending the results (ii) to higher $g$ is
computational complexity. We expect the homomorphism property (ii) will hold for all $g\geq 2$. 

For partitions $\replacemu=(g_1, \ldots, g_k)$ that do not fall into cases (i) - (iv), inequality \eqref{ineq} yields $$\mathsf{cod}_{\replacemu}>\binom{g}{2}-\mathsf{cod}_\J=2g-3\,.$$
By  either Theorem \ref{thm:contributions} or Theorem \ref{thm:wallcr}, the Torelli pullbacks of
product classes lie in the tautological ring
$$\R^*(\M_g^{\ct}) \subset \CH^*(\M_g^{\ct})\, .$$
The vanishing $\R^{>2g-3}(\M_g^{\ct})=0$ of \cite{FP5, GV} then implies that $$\mathsf{Tor}^*([\A_{g_1}\times \cdots \times \A_{g_k}])=0\,,$$
unless $(g_1, \ldots, g_k)$ is one of the partitions listed in (i) - (iv). For instance, for $\A_{3} \times \A_{g-3} \rightarrow \A_g$, we have
the following result related to case (iii):
$$\Tor^*([\A_{3} \times \A_{g-3}]) = 0 \in \CH^{3g-9}(\M_g^{\ct})$$ 
for $g\geq 7$. After pushing forward under $\Tor$, we obtain the vanishing: $$[\J_g]\cdot [\A_{3} \times \A_{g-3}]=0\in \CH^{\binom{g+1}{2}-6}(\A_g), \quad g\geq 7\,.$$ The same reasoning also yields the following sporadic vanishings in $\mathsf{CH}^*(\A_g)$: $$[\J_g]\cdot[\A_4\times \A_{g-4}]=0\,, \quad 8\leq g\leq 13\,,\quad \text{and }\quad[\J_g]\cdot [\A_5\times \A_{g-5}]=0\,, \quad 10\leq g\leq 11\,.$$

\subsection{Conjectures}

The widest interpretation of our calculations is that the homomorphism property  holds
in general.

\begin{conj} \label{conj:Hom}
    The $\mathbb{Q}$-linear map $\taut: \CH^*(\A_g) \rightarrow \R^*(\A_g)$
    is a 
    homomorphism of $\mathbb{Q}$-algebras.
\end{conj}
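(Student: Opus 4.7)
The plan is to reformulate Conjecture~\ref{conj:Hom} as a statement about the kernel of tautological projection. By the nondegeneracy of the $\lambda_g$-pairing on $\R^*(\A_g)$ (Theorem~\ref{thm:vdG_tautological}), the identity $\taut(\alpha\cdot\beta)=\taut(\alpha)\cdot\taut(\beta)$ holds if and only if $\langle \alpha\cdot\beta,\delta\rangle = \langle \taut(\alpha)\cdot\taut(\beta),\delta\rangle$ for every $\delta\in\R^*(\A_g)$. Writing $\gamma = \alpha - \taut(\alpha)\in\ker(\taut)$ and using \eqref{tss}, one checks that the strong homomorphism property is equivalent to the single statement that $\ker(\taut)$ is a two-sided ideal in $\CH^*(\A_g)$. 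Equivalently: for every $\gamma \in \CH^*(\A_g)$ with $\epsilon^{\mathrm{ab}}(\gamma\cdot\delta)=0$ for all $\delta\in\R^*(\A_g)$, every $\alpha\in\CH^*(\A_g)$, and every $\delta\in\R^*(\A_g)$, one must have
\[
\epsilon^{\mathrm{ab}}(\alpha\cdot\gamma\cdot\delta)\;=\;0.
\]

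With the conjecture in this form, I would attempt a proof by stratifying $\CH^*(\A_g)$ via a Noether-Lefschetz filtration $F^\bullet \CH^*(\A_g)$, where $F^k$ consists of classes supported on the locus of $(X,\theta)$ with Picard rank at least $k$, together with a filtration by the dimension of the endomorphism algebra of the generic point of the support. The known cases (i)--(iv) provide a substantial base: case~(i) of \cite{Iribar} settles products of two classes living in $F^{\geq 1}$ coming from non-simple loci, and the results of \cite{aitordenisjeremy} control multiplication by classes supported on elliptic-product loci $[\A_1\times\cdots\times\A_1\times\A_{g-m}]$. The induction step would use Theorems~\ref{thm:contributions} and~\ref{thm:wallcr} of the present paper: the pullback of a Noether-Lefschetz class along an isogeny or product morphism can be expanded using the excess intersection and wall-crossing formulas, reducing multiplication in $\CH^*(\A_g)$ by a geometric cycle to an explicit tautological computation on the moduli of curves, where $\taut$ can in principle be compared term by term.

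The main obstacle is that $\ker(\taut)$ is defined by a pairing condition and has no known geometric description. Showing it is closed under multiplication by an arbitrary class requires either (a) producing a ring-theoretic complement to $\R^*(\A_g)$ inside $\CH^*(\A_g)$ — a kind of motivic or Hodge-theoretic splitting — or (b) verifying the homomorphism property on a set of generators of $\CH^*(\A_g)$, for which neither a conjectural list nor a tractable reduction is presently available. In practice, I expect the first serious progress beyond the linear-algebra reformulation will come from a conceptual construction of $\taut$: for instance, realizing $\taut$ as the projection associated to an additive group action, a Hecke-style correspondence on $\A_g$, or a Chow-Künneth decomposition of the motive of a toroidal compactification $\overline{\A}_g$ that respects multiplication modulo boundary. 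Absent such a structural input, our computational evidence (Theorem~\ref{thm:A3A3_A2Ax}, together with the cases (i)--(iv)) only strengthens the expectation that such a structure exists; locating it is precisely the hard part.
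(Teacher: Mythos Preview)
The statement is a \emph{conjecture}, not a theorem: the paper does not prove it, and does not claim to. The paper presents Conjecture~\ref{conj:Hom} as a bold extrapolation from the computational evidence (Theorem~\ref{thm:A3A3_A2Ax} and the cases (i)--(iv)), offers heuristic motivation via a hypothetical compact codimension-$g$ subvariety $V\subset\A_g$ with $\H^*(V)\cong\R^*(\A_g)$, notes that no such $V$ exists over $\mathbb{C}$ for $g\ge 3$ by \cite{keelsadun}, and then retreats to the weaker Conjecture~\ref{conj:weakHom}. That is the entirety of the paper's treatment.

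Your proposal is likewise not a proof, and you say so yourself in the final paragraph. The reformulation you give---that the strong homomorphism property is equivalent to $\ker(\taut)$ being an ideal of $\CH^*(\A_g)$---is correct and is the obvious first observation (write $\alpha=\taut(\alpha)+\gamma_\alpha$ with $\gamma_\alpha\in\ker(\taut)$ and expand $\alpha\cdot\beta$; use \eqref{tss}). But everything after that is a wish list: a Noether--Lefschetz filtration argument whose induction step you do not carry out, followed by an honest admission that neither a geometric description of $\ker(\taut)$ nor a generating set for $\CH^*(\A_g)$ is available. The ``main obstacle'' you identify is exactly why the statement remains a conjecture.

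In short: there is no gap to name because there is no argument to critique. Your text is a reasonable discussion of why the conjecture is hard, comparable in spirit to the paper's own paragraph of motivation, but it should not be labeled a proof proposal.
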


An explanation for Conjecture \ref{conj:Hom} could be the existence of a compact subvariety $$\iota: V\hookrightarrow\mathcal A_g$$ of codimension $g$
for which $\mathsf{H}^*(V)$ is generated by pullbacks of tautological classes on $\A_g$.
By the ampleness of $\lambda_1$ and the Gorenstein property of $\R^*(\A_g)$, 
we would then have
$$\mathsf{H}^*(V) \stackrel{\sim}{=} \R^*(\A_g)$$
as $\mathbb{Q}$-algebras.
Moreover, $\taut$ would then be realized as the pullback 
$$\taut: \CH^*(\A_g) \stackrel{\iota^*} \rightarrow \mathsf{H}^*(V) \stackrel{\sim}{=} \R^*(\A_g)\, $$
and, therefore, would be a homomorphism of $\mathbb{Q}$-algebras.

For $g\geq 3$, there are no such subvarieties $V \subset \A_g$ over the complex numbers \cite{keelsadun}. However, in characteristic $p$, the locus $V_0\subset \A_g$ of abelian varieties with $p$-rank $0$ provides a compact subvariety of the correct codimension (which represents
a multiple of $\lambda_g$), see \cite{NO, vdg}. Unfortunately, the Chow groups of $V_0$ have not yet been determined. A specialization argument shows that the homomorphism property  for cycles on $\mathcal A_g(\overline{\mathbb F}_p)$ for all $p$ implies the homomorphism
property for cycles on $\mathcal A_g(\overline{\mathbb Q})$. Since every cycle over $\mathbb C$ is homologous to a cycle defined over $\overline{\mathbb Q}$, Conjecture \ref{conj:Hom} would then follow.



Conjecture \ref{conj:Hom} is a rather
bold interpretation of the current data.
Another possibility is that the homomorphism property holds in
the restricted case where at least one of the classes $\alpha$ or $\beta$
is a Noether-Lefschetz cycle or, more generally, a special cycle \footnote{See \cite {MO} for a discussion of special cycles in the context of $\A_g$.}.

\subsection{Stable curves and  toroidal compactifications of $\A_g$}
Let $\A_g\subset \overline{\A}_g$ be
a toroidal compactification to which the Torelli map can be lifted:
\[
\begin{tikzcd}
\M_g^{\ct} \arrow[r] \arrow[d,"\Tor"] & \overline{\mathcal{M}}_g \arrow[d, "\Tor"] \\
\A_{g}  \arrow[r] & \ \overline{\A}_g\,.
\end{tikzcd}
\]
The two standard examples are the perfect cone compactification and the second Voronoi compactification, see \cite{AMRT} for the definitions.  
A natural question is to understand the
Torelli fiber product
\begin{equation} \label{ndrf}
\begin{tikzcd}
\overline{\mathcal Z} \arrow[r] \arrow[d] & \overline{\M}_g \arrow[d, "\Tor"] \\
\overline{\mathcal{A}_{g_1} \times \cdots \times \mathcal{A}_{g_k}} \arrow[r] & \ \overline{\mathcal{A}}_g\,,
\end{tikzcd}
\end{equation}
where, in the lower left corner, a compactification of the product that maps to $\overline{\A}_g$ is taken.
In the case of the perfect cone compactification, the  product of the compactifications maps to $\overline{\A}_g$, as proven in \cite {SB}, so we can take
$$\overline{\mathcal{A}_{g_1} \times \cdots \times \mathcal{A}_{g_k}} =
\overline{\mathcal{A}}_{g_1} \times \cdots \times \overline{\mathcal{A}}_{g_k} \,. $$
If we further assume that the image 
$\Tor(\overline{\M}_g)$ lies in the nonsingular locus of $\overline{\A}_g$,
a property that holds for both perfect cone and second Voronoi \cite[Section 5.2]{2ndvor}, then
the following question is well-defined.

\begin{question} \label{qqq} Compute
$\Tor^*([\, \overline{\mathcal{A}_{g_1} \times \cdots \times \mathcal{A}_{g_k}}\, ])
\in \CH^*(\overline{\M}_g)\, .$
\end{question}

At the moment, Question \ref{qqq} is open. In principle, both excess intersection theory and the wall-crossing approach can be applied. For the 
former, the scheme theoretic structure of the fiber product \eqref{ndrf} over the divisor $\Delta_0 \subset \overline{\M}_g$ 
would have to be determined. The wall-crossing approach
has an advantage in that the families
Gromov-Witten theory is well-defined for 
the family $$\overline{\pi}_r : \overline{\mathcal X}_r \rightarrow \overline{\A}_r\ , $$ since $\overline{\pi}_r$ is log smooth.
 Wall-crossing then provides a formula for the families Gromov-Witten class in terms of simpler unramified contributions. However,
the comparison \eqref{ks33} of
the families Gromov-Witten class to the
intersection theory is not immediate 
over the compactification.
If we restrict our attention to the partial
compactification of $\A_g$ determined by the {\em torus rank 1 locus}{\footnote{See \cite{Mumford999} for the definitions.}} $\A_g^ {\leq 1} \subset \overline{\A}_g$, we obtain the fiber diagram
\[
\begin{tikzcd}
\M_g^{\ct} \arrow[r] \arrow[d,"\Tor"] & {\mathcal{M}}^{\leq 1}_g \arrow[d, "\Tor"] \\
\A_{g}  \arrow[r] & \ {\A}^{\leq 1}_g\,,
\end{tikzcd}
\]
where $\M_g^{\leq 1}$ denotes the moduli space of stable curves with at most 1 loop in the dual graph. The wall-crossing method \cite{aitordenisjeremy} provides
a complete calculation of
$$\Tor^*\left(\left[\, \overline{\mathcal{A}_{1} \times 
\mathcal{A}_{g-1}}\big{|}_{\A^{\leq 1}_g}\, \right]\right)
\in \R^*({\M}^{\leq 1}_g)\, .$$ 

In \cite{COP}, the kernel in $\R^*(\M^{\ct}_6)$ of the $\lambda_6$-pairing
was constructed geometrically by the class
$$\Tor^*\Big([\A_1\times \A_5] - \taut([\A_1\times \A_5])\Big)\in \R^5(\M_6^{\ct})\, .$$
Using the geometry of
the torus rank $1$ locus $\A_6^{\leq 1} \subset \overline{\A}_6$,
a parallel construction holds for the
1-dimensional kernel
 of the $\lambda_5$-pairing
on
 $\R^*(\M_{5,2}^{\ct})$.

\begin{thm}[Feusi-Iribar L\'opez-Nesterov \cite{aitordenisjeremy}]\label{trank1}
 Let $\iota: \M_{5,2}^{\ct} \rightarrow \M_6^{\leq 1}$ be the boundary map. Then, the class{\footnote{The definition of the class $\taut^{\leq 1}\left([\overline{\A_1\times \A_{g-1}}\big{|}_{\A_g^{\leq 1}}]\right)\in \mathsf {CH}^{g-1}(\A_g^{\leq 1})$ extending $\taut ([\A_1\times \A_{g-1}])\in \CH^{g-1}(\A_g)$ over the boundary 
 is given in \cite{aitordenisjeremy}.}}
    $$
    \iota^* {\Tor}^*\left(
\left[\overline{\mathcal{A}_{1} \times 
\mathcal{A}_{5}}|_{\A^{\leq 1}_6}\right] -
\taut^{\leq 1}\left(\left[\overline{\mathcal{A}_{1} \times 
\mathcal{A}_{5}}|_{\A^{\leq 1}_6}\right]\right)\right)
     \in \R^{5}(\M_{5, 2}^{\mathsf{ct}})
    $$
    is the generator of the 1-dimensional kernel of the $\lambda_5$-pairing on $\R^*(\M_{5, 2}^{\ct})$. 
\end{thm}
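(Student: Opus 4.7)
The plan is to adapt the construction of the Gorenstein kernel element in $\R^5(\M_6^{\ct})$ from \cite{COP}, working instead on the partial compactification $\A_6^{\leq 1}$ and transferring the resulting kernel class to $\R^5(\M_{5,2}^{\ct})$ through the boundary inclusion $\iota$. First, set
\[
\alpha \;=\; \bigl[\overline{\A_1 \times \A_5}|_{\A_6^{\leq 1}}\bigr] \;-\; \taut^{\leq 1}\bigl(\bigl[\overline{\A_1 \times \A_5}|_{\A_6^{\leq 1}}\bigr]\bigr) \;\in\; \CH^5(\A_6^{\leq 1})\, .
\]
The extension $\taut^{\leq 1}$ constructed in \cite{aitordenisjeremy} is designed precisely so that $\alpha$ is orthogonal to every class in $\R^*(\A_6^{\leq 1})$ under the appropriately extended $\lambda_6$-pairing.

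The main step is to transfer this orthogonality to the $\lambda_5$-pairing on $\M_{5,2}^{\ct}$. For any $\delta \in \R^4(\M_{5,2}^{\ct})$, the projection formula applied to the commutative square of Torelli maps ($\M_{5,2}^{\ct}\to D$ on the boundary and $\M_6^{\leq 1}\to \A_6^{\leq 1}$ globally, where $D$ is the torus rank $1$ boundary divisor of $\A_6^{\leq 1}$) yields
\[
\int_{\overline{\M}_{5,2}} \iota^*\Tor^*(\alpha) \cdot \delta \cdot \lambda_5 \;=\; \int_{\A_6^{\leq 1}} \alpha \cdot \iota'_*\Tor_*(\delta \cdot \lambda_5)\, ,
\]
where $\iota'$ is the inclusion of $D$ into $\A_6^{\leq 1}$. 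The class $\iota'_*\Tor_*(\delta \cdot \lambda_5)$ on $\A_6^{\leq 1}$ is tautological: $D$ fibers naturally over $\A_5$, and Faber's pushforward principle (that pushing $\lambda_g$-multiples of tautological classes along Torelli produces tautological classes, cf.\ \cite{Fa}) applies fiberwise; the subsequent inclusion $\iota'_*$ lands in the tautological ring of the partial compactification by its definition. The orthogonality of $\alpha$ forces the integral to vanish, so $\iota^*\Tor^*(\alpha)$ lies in the kernel of the $\lambda_5$-pairing on $\R^*(\M_{5,2}^{\ct})$.

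For non-triviality, I would compute the class explicitly. The wall-crossing formula (Theorem \ref{thm:wallcr}), extended to the torus rank $1$ locus in \cite{aitordenisjeremy}, expresses $\Tor^*\bigl([\overline{\A_1 \times \A_5}|_{\A_6^{\leq 1}}]\bigr)$ in terms of tautological generators of $\R^*(\M_6^{\leq 1})$, and a parallel computation yields $\taut^{\leq 1}$ of the same class. Pulling back via $\iota^*$ to $\M_{5,2}^{\ct}$ and evaluating in $\mathsf{admcycles}$ verifies that the resulting element is nonzero; combined with the $1$-dimensionality of the kernel (checked computationally via the rank of the intersection pairing), this pins down our class as the generator. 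The main obstacle is the tautological-pushforward assertion in the middle paragraph: because the Hodge bundle acquires a trivial summand along $D$, the class $\lambda_6$ degenerates on the boundary, and the extended $\lambda_6$-pairing on $\A_6^{\leq 1}$ must absorb a residue correction along the toroidal normal direction. Ensuring that this correction remains tautological under $\iota'_*\Tor_*$ and matches the definition of $\taut^{\leq 1}$ is the technical heart of the argument, and is precisely what the careful construction in \cite{aitordenisjeremy} is engineered to accomplish.
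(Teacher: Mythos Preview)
The paper does not prove Theorem \ref{trank1} itself; the result is quoted from \cite{aitordenisjeremy}. What the paper does prove independently is the parallel Theorem 11$'$ (via Theorem \ref{prop:lies_in_kernel}), asserting that the Abel-Jacobi pullback $\aj^*(\Delta_{5,1})$ generates the same kernel; the identification of the two classes is again deferred to \cite{aitordenisjeremy}. So strictly speaking there is no in-paper proof to compare against.

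That said, the paper's argument for Theorem 11$'$ is quite different from your outline. It is a direct computation: $\aj^*[\PR_{g,1}]$ is expressed as a families Gromov--Witten class (Lemma \ref{lem:generalizedNLGW}), and then the identity $\lambda_g \cap [\Mbar_{g,2}(\pi,1)]^\vir = \tfrac{1}{24}\lambda_{g-1}\cap [\Mbar_{g,2}(E,1)]^\vir$ from \cite{ILPT}, together with an explicit evaluation from \cite{OP23}, yields $\lambda_g\cdot\overline{\Delta}_{g,1}=0$ in $\R^{2g}(\Mbar_{g,2})$ for every $g$. No projection-formula transfer and no tautological-pushforward claim is needed; non-vanishing for $(g,s)=(5,1)$ is then checked in $\mathsf{admcycles}$.

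Your outline has a genuine gap at the step you yourself flag. The assertion that $\iota'_*\Tor_*(\delta\cdot\lambda_5)$ lies in $\R^*(\A_6^{\leq 1})$ does not follow from any ``Faber pushforward principle'': that principle concerns $\Tor_*$ from $\M_g^{\ct}$ to $\A_g$ and reduces to Hodge integrals. Here the relevant map from $\M_{5,2}^{\ct}$ lands in the boundary divisor, which is essentially a torus bundle over $\X_5$, and pushforwards of tautological classes along Abel--Jacobi type maps to $\X_g$ are in general \emph{not} tautological. Moreover, the right-hand side of your displayed projection formula is an integral over the non-compact $\A_6^{\leq 1}$; making sense of it requires exactly the boundary/residue analysis you defer to \cite{aitordenisjeremy}, at which point the argument is no longer independent of the cited proof. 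The paper's route through $\X_g^s$ sidesteps both problems: it stays over the compact-type locus and replaces the structural orthogonality argument by an explicit identity in Gromov--Witten theory.
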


A different approach to the kernel of the $\lambda_5$-pairing on 
$\R^*(\M_{5,2}^{\ct})$ via the geometry of the universal family $\X_5\rightarrow \A_5$
will be presented in Section \ref{gouf}.

\subsection{The geometry of the universal fiber product $\mathcal{X}^s_g$} \label{gouf}

A difficulty in studying Question \ref{qqq} 
and extending Theorem \ref{trank1} to higher torus rank loci is the complexity of the boundary geometry of toroidal compactifications $\A_g\subset \overline{\A}_g$. To generate elements of the Gorenstein kernel of $\R^*(\M^{\ct}_{g,2s})$ for general $s$, a different approach can be pursued using the $s$-fold fiber product
$$\pi^s_g:\mathcal{X}^s_g \rightarrow \A_g\, $$
of the universal abelian variety $\pi_g: \mathcal{X}_g \rightarrow \A_g$. The 
universal fiber product geometry stays over
$\A_g$, so the intricacies of the boundary of $\overline{\A}_g$ are avoided.

There is a natural proper morphism
$\A_1 \times \X_{g-1} \rightarrow  \X_g$
defined{\footnote{We have dropped the principal polarizations here for notational convenience.}} by
$$
({E},(Y, p\in Y)) \mapsto ({E} \times Y, (0,p)\in {E}\times Y)\, .
$$
Hence, we also have a morphism $\A_1 \times \X_{g-1} \rightarrow \A_g$ which factors through 
$$
\A_1 \times \A_{g-1} \rightarrow \A_g\, .
$$

\begin{definition}\label{prlocus}
    The generalized product locus is
    \begin{equation}
    \PR_{g,s} = \underbrace{(\A_1 \times \X_{g-1}) \times_{\A_g} \cdots \times_{\A_g}  (\A_1 \times \X_{g-1})}_{s}
    \rightarrow \X_g^s\, . \qedhere
    \end{equation}
\end{definition}
\noindent An immediate property of the definition is the factorization
$$
\PR_{g,s} \rightarrow  \mathcal{X}^s_g|_{\A_1\times \A_{g-1}} \rightarrow \X_g^s\, ,
$$
where both morphisms are proper. Let
$$
[\PR_{g,s}]\in \CH^{g-1+s}(\X_g^s)
$$
denote the pushforward of the fundamental class of $\PR_{g,s}$. If $s=0$, $\X_g^0=\A_g$ and
$$
[\PR_{g,0}]=[\A_1\times \A_{g-1}] \in \CH^{g-1}(\A_g)\, .
$$

The algebra of tautological classes $\R^*(\X_g^s)\subset \CH^*(\X_g^s)$ is generated over
$\R^*(\A_g)$ by the classes
$$
\big\{\, \theta_i\in \CH^1(\X_g^s)\, \big\}_{1\leq i \leq s}\ \ \  \text{and} \ \ \
\big\{\, \eta_{ij} \in \CH^1(\X_g^s)
\, \big\}_{1\leq i< j\leq s}\, ,
$$
where $\theta_i $ is the pullback of the universal $\theta$ class{\footnote{The $\theta$
class is symmetric and normalized: the restriction of $\theta$ to the zero section of $\pi:\X_g \rightarrow \A_g$ is
required to vanish.}} from the
$i^{th}$ factor  $\X_g \rightarrow \A_g$  of the universal fiber product and
$\eta_{ij}$ is the pullback of the first Chern class of the universal Poincar\'e bundle{\footnote{The Poincar\'e bundle is normalized by the standard convention: the restriction is required to be trivial on the two zero sections. By symmetry, $\eta_{ij}=\eta_{ji}$, see Section \ref{tautcomp}.}} 
from the factors $i$ and $j$ of the universal fiber product. The definitions here follow \cite {GZ, BMP, GrushevskyHulekTommasiStable, MoonenChow}. 

In Section \ref{s5454}, we define a projection operator 
$$\taut^s: \CH^*(\X_g^s) \rightarrow 
\R^*(\X_g^s)\,.$$
The projections of the generalized product loci are computed in Section 
\ref{pr13}.

\begin{thm}\label{thm:proj_product}
    Let $\kappa_{g,s} = \prod_{k=0}^{s-1}(g+\frac{k}{2})$. Then, 
    $$
    \taut^s([\PR_{g,s}])=
    \frac{g}{6\kappa_{g,s}|B_{2g}|} \,\det\begin{pmatrix}
        \theta_1 & \eta_{12}/2 & \ldots & \eta_{1s}/2\\
        \eta_{12}/2 & \theta_2 & \ldots & \eta_{2s}/2\\
        \vdots & \vdots & \ddots & \vdots \\
        \eta_{1s}/2& \eta_{2s}/2  & \ldots & \theta_s
        \end{pmatrix} \cdot  \lambda_{g-1}\, \in \R^*(\X^s_g)\, .
    $$
\end{thm}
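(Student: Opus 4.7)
The strategy is to factor the generalized product locus through the morphism
\[
\phi\colon \A_1 \times \X_{g-1}^s \longrightarrow \X_g^s,\quad (E; Y, p_1,\ldots,p_s)\longmapsto \bigl(E\times Y;\,(0_E,p_1),\ldots,(0_E,p_s)\bigr),
\]
so that $[\PR_{g,s}] = \phi_*(1) \in \CH^{g-1+s}(\X_g^s)$, and then reduce the computation to a projection formula over $\overline{\A}_{g-1}$. The first task is to compute $\phi^*$ on the generators of $\R^*(\X_g^s)$ over $\R^*(\A_g)$. Using the decomposition $\theta_{E\times Y}=\theta_E+\theta_Y$ of the normalized symmetric theta class on a product PPAV together with the normalization $\theta_E|_{0_E}=0$, one obtains $\phi^*\theta_i=\theta_i^Y$, where $\theta_i^Y$ is the theta pulled from the $i$-th copy of $\X_{g-1}$. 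Using the factorization $\mathcal{P}_{E\times Y}=\mathcal{P}_E\boxtimes \mathcal{P}_Y$ of the normalized Poincar\'e bundle, together with its triviality along the zero section of $E$, one obtains $\phi^*\eta_{ij}=\eta_{ij}^Y$. Consequently $\phi^*M = M^Y$ and $\phi^*\det M=\det M^Y$, where $M^Y$ is the analogous matrix built from the $\theta_i^Y$ and $\eta_{ij}^Y$ on $\X_{g-1}^s$.

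Next, I would unwind the definition of $\taut^s$ from Section \ref{s5454}. Since $\R^*(\X_g^s)$ is generated over $\R^*(\A_g)$ by the $\theta_i$ and $\eta_{ij}$, the stated identity reduces to verifying, for every tautological $\lambda\in\R^*(\A_g)$ and every monomial $m$ in the $\theta_i,\eta_{ij}$, that
\[
\epsilon^s\bigl([\PR_{g,s}]\cdot \lambda\cdot m\bigr) \;=\; \frac{g}{6\kappa_{g,s}|B_{2g}|}\,\epsilon^s\bigl(\det M\cdot \lambda_{g-1}\cdot\lambda\cdot m\bigr).
\]
The projection formula for $\phi$ and then for the product morphism $j\colon \A_1\times\A_{g-1}\hookrightarrow\A_g$, the Whitney identity $j^*\lambda_g=\lambda_1^E\cdot\lambda_{g-1}^Y$, the vanishing $(\lambda_1^E)^2=0$ from Theorem \ref{thm:vdG_tautological}(i), and the evaluation $\int_{\overline{\A}_1}\lambda_1=1/24$, all together reduce the left-hand side to a pairing on $\overline{\A_{g-1}}$ of $(\pi_{g-1}^s)_*\phi^*m$ against $\lambda|_Y\cdot \lambda_{g-1}^Y$, where $\lambda|_Y$ denotes the substitution $\lambda_i^{(g)}\mapsto \lambda_i^{(g-1)}$ in the polynomial $\lambda$.

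What remains is the combinatorial identity
\[
\frac{g}{6\kappa_{g,s}|B_{2g}|}\,\bigl\langle \taut\bigl((\pi_g^s)_*(\det M\cdot m)\bigr),\,\lambda\cdot\lambda_{g-1}\bigr\rangle_{\A_g} \;=\; \frac{1}{24}\,\bigl\langle \taut\bigl((\pi_{g-1}^s)_*\phi^*m\bigr),\,\lambda|_Y\bigr\rangle_{\A_{g-1}},
\]
to be established for every $\lambda$ and every $m$. The main obstacle is the explicit identification of the constant $\kappa_{g,s}=\prod_{k=0}^{s-1}(g+k/2)$, which must emerge from the interaction of $\det M$ with the relative pushforward $\pi_g^s$. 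I expect the cleanest route to be an induction on $s$ using the forgetful morphism $\X_g^s\to\X_g^{s-1}$ and the cofactor expansion of $\det M$ along the final row, which should produce the recursion $\kappa_{g,s}/\kappa_{g,s-1}=g+(s-1)/2$ from integrating over the last fiber $X$. This integration is governed by the Fourier-Mukai calculus on $\X_g\to\A_g$, and in particular by the relation $\Delta^*\eta = 2\theta$ for the Poincar\'e class along the diagonal; the half-integer shifts in $\kappa_{g,s}$ are precisely what this factor of $2$ produces in the recursion. As a sanity check, the formula correctly reproduces the $s=0$ case $\taut([\A_1\times \A_{g-1}])=\frac{g}{6|B_{2g}|}\lambda_{g-1}$ (since $\kappa_{g,0}=1$ and the empty determinant is $1$) and the $s=1$ pairing $(\pi_g)_*([\PR_{g,1}]\cdot \theta^{g-1}) = (g-1)!\,[\A_1\times\A_{g-1}]$ against $(\pi_g)_*(\theta\cdot\theta^{g-1})=g!$, which agree after the factor $\kappa_{g,1}=g$.
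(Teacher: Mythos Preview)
Your reduction is correct in outline and matches the paper's strategy: factor through $\phi$, compute $\phi^*\theta_i=\theta_i^Y$ and $\phi^*\eta_{ij}=\eta_{ij}^Y$, then use the Whitney splitting $j^*\lambda_g=\lambda_1^E\lambda_{g-1}^Y$, the vanishing $(\lambda_1^E)^2=0$, and $\int_{\overline{\A}_1}\lambda_1=\tfrac{1}{24}$ to separate the problem into a horizontal piece (the known formula $\taut([\A_1\times\A_{g-1}])=\tfrac{g}{6|B_{2g}|}\lambda_{g-1}$) and a vertical piece. The paper packages this split as Lemma~\ref{lem:integraction_projections}, but the content is the same.

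The gap is in the vertical piece. After the reduction, what you must prove is the fiberwise identity (Proposition~\ref{prop:vertical_projection_product} in the paper): for $X=E\times Y$ with $\dim Y=g-1$,
\[
\int_{X^s}\det(M)\cdot m \;=\; \kappa_{g,s}\int_{Y^s} m\big|_Y
\]
for every monomial $m$ in the $\theta_i,\eta_{ij}$. The paper's mechanism for this is entirely different from what you sketch. First, Lemma~\ref{matrix} identifies $\int_{X^s}\eta^{\mathsf a}/\mathsf a!$ with the coefficient $[m^{\mathsf a}]\det(M)^g$ in the $g$-th power of the determinant of the \emph{symbolic} symmetric matrix $M=(m_{ij})$. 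With this in hand, multiplying by $\det(M)$ in the integrand corresponds to applying the differential operator $\det(\partial_{ij})$ (with $\partial_{ij}=2^{\delta_{ij}-1}\partial/\partial m_{ij}$) to $\det(M)^g$, and the constant $\kappa_{g,s}=\prod_{k=0}^{s-1}(g+\tfrac{k}{2})$ drops out of the classical Capelli identity
\[
\det(\partial_{ij})\,\det(M)^g \;=\; \kappa_{g,s}\,\det(M)^{g-1}.
\]
Your proposed induction on $s$ via cofactor expansion along the last row does not reach this: the off-diagonal cofactors of $M$ are not $\det M^{(s-1)}$, so integrating over the last fiber against an arbitrary $m$ does not produce a clean recursion in $s$ without further input. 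The appeal to Fourier--Mukai calculus and $\Delta^*\eta=2\theta$ is a red herring here; the identity is purely algebraic in the symmetric-matrix variables, and the half-integer shifts in $\kappa_{g,s}$ come from the symmetry $m_{ij}=m_{ji}$ in Capelli's formula, not from Poincar\'e bundle normalizations. Replacing your last paragraph with Lemma~\ref{matrix} and the Capelli identity would complete the argument.
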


We will see that the classes $[\PR_{g, s}]\in \CH^*(\X_g^s)$ are not tautological in general\footnote{In fact,
$[\PR_{g,s}]$ is certainly not tautological for 
$g=6$, $g=12$ and for all $g\geq 16$ even, see Remark \ref{r49}}. As in the case of product cycles on $\A_g$, a natural approach to studying $[\PR_{g, s}]$ is
to pullback  the class to the moduli space of curves. 

\begin{definition}
The Abel-Jacobi map   $\aj: \M_{g,2s} \rightarrow \X_g^s$ is defined by
\begin{equation}
\aj(C,p_1,\ldots,p_{2s})=(\mathsf{Jac}(C), \mathcal{O}_C(p_1-p_2), \mathcal{O}_C(p_3-p_4), \ldots, \mathcal{O}_C(p_{2s-1}-p_{2s}))\, .   \end{equation} The Abel-Jacobi map admits an extension to curves of compact type $$\aj:\M_{g, 2s}^{\ct}\to \X_g^s\,,$$ obtained by incorporating canonical twists over the boundary \cite{GZ2,Hai,JPPZ,P1}. \qedhere
\end{definition}

Using the techniques of Theorems \ref{thm:contributions} and \ref{thm:wallcr}, we give two proofs of the following result in Section \ref{s6363}.
\begin{thm} \label{thm:ajaj} The Abel-Jacobi pullbacks of the generalized product cycles are tautological: 
$$\aj^*([\PR_{g, s}])\in \R^{g-1+s}(\M^{\ct}_{g,2s})\, .$$
\end{thm}
We next consider the non-tautological components of the generalized product cycles,  
$$\Delta_{g,s}= [\PR_{g,s}] -\taut^s([\PR_{g,s}]) \in \CH^{g-1+s}(\X_g^s)\,.$$ 
When $s=0$, it was proven in \cite {COP} that the Torelli pullbacks of $\Delta_{g, 0}$ lie in 
the Gorenstein kernels of $\R^*(\M_{g}^{\ct}),$ whose definition we recall. 

\begin{definition}
The Gorenstein kernel, $\mathsf {K}_{g,n} \subset \R^*(\M^{\ct}_{g,n})$,
is the kernel of the $\lambda_g$-pairing,
\begin{equation}
\R^*(\M_{g,n}^{\ct}) \times \R^*(\M_{g,n}^{\ct}) \rightarrow \mathbb{Q}\, ,\ \ \ \langle \alpha,\beta \rangle = \int_{\overline{\M}_{g,n}} \overline{\alpha} \cdot\overline{\beta}\cdot \lambda_g\, . \qedhere
\end{equation}
\end{definition}

Motivated by the case $s=0$, it is natural to investigate the relationship between the generalized product cycles and the Gorenstein kernels for $s\geq 1$. In Section \ref{s65}, we show  
\begin{thm}\label{thm:inkernel} The pullback $\aj^*(\Delta_{g, 1})$ lies in the Gorenstein kernel $\mathsf K_{g, 2}\subset \R^*(\M_{g, 2}^{\ct})$ for all $g\geq 1$. In addition, 
$\aj^*(\Delta_{g, 2})$ lies in the Gorenstein kernel $\mathsf K_{g, 4}\subset \R^*(\M_{g, 4}^{\ct})$ for $1\leq g\leq 4.$ 
\end{thm}

For $s=1$, the classes $\aj^*(\Delta_{g-1,1})\in \R^{g-1}(\M_{g-1, 2}^{\mathsf{ct}})$ can be computed on the canonical partial compactification $\A_g^{\leq 1}$. More precisely, $\aj^*(\Delta_{g-1,1})$ 
and
$$
    \iota^* {\Tor}^*\left(
\left[\overline{\mathcal{A}_{1} \times 
\mathcal{A}_{g-1}}|_{\A^{\leq 1}_g}\right] -
\taut^{\leq 1}\left(\left[\overline{\mathcal{A}_{1} \times 
\mathcal{A}_{{g-1}}}|_{\A^{\leq 1}_g}\right]\right)\right)
     \in \R^{g-1}(\M_{g-1, 2}^{\mathsf{ct}})
    $$
are proven to be equal in \cite{aitordenisjeremy}. Independently of the above equality and Theorem \ref{trank1}, we can calculate $\aj^*(\Delta_{5,1})$ directly to obtain:

\begin{thm11}\label{thm:10'}
    The class $\aj^*(\Delta_{5,1})\in \R^{5}(\M_{5,2}^{\ct})$ is the generator of the 1-dimensional kernel $$\mathsf K_{5, 2}\subset \R^*(\M_{5,2}^{\ct})\, .$$
\end{thm11}

The construction
of $\aj^*(\Delta_{g,s})$ for $s>1$  
may be viewed as probing the higher torus 
rank loci of toroidal compactifications. In the simplest next case, $s=2$, we show the following:

\begin{thm}\label{thm:k44} The class 
$\aj^*(\Delta_{4,2})\in \R^{5}(\M_{4, 4}^{\mathsf{ct}})$
is a nontrivial element of the kernel $$\mathsf K_{4, 4}\subset \R^*(\M_{4,4}^{\ct})\, .$$ 
\end{thm}

As a consequence, we obtain examples of non-tautological classes on the universal family of abelian varieties and its square. 
 
\begin{cor}\label{cor:nont} The classes $[\PR_{5, 1}]
\in \CH^5(\X_5)$ and 
$[\PR_{4, 2}]
\in \CH^5(\X_4^2)$ are not tautological. 
\end{cor}
The above results suggest the following general conjecture: 
\begin{conj} \label{jecc}
Let $g\geq 1$ and $s\geq 0$.
The class 
$$\aj^*(\Delta_{g,s})\in \R^{g-1+s}(\M^{\ct}_{g,2s})$$
 lies in the
Gorenstein kernel of $\R^{g-1+s}(\M^{\ct}_{g,2s})$ and is nonzero for all but finitely many pairs of the form $(g\geq 2,s)$.
\end{conj}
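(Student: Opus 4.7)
The plan is to reduce the Gorenstein kernel property to the orthogonality built into the definition of $\taut^s$. By construction in Section \ref{s5454}, the class $\Delta_{g,s}$ satisfies
\[
\int_{\overline{\X_g^s}}\overline{\Delta_{g,s}}\cdot \overline{\delta}\cdot \lambda_g \,=\, 0 \qquad \text{for every }\delta\in\R^*(\X_g^s),
\]
and the idea is to transport this orthogonality across the Abel-Jacobi map. For $\alpha\in\R^{g-2+s}(\M_{g,2s}^{\ct})$ of complementary codimension, I would lift the Abel-Jacobi map to a morphism $\overline{\aj}:\overline{\M}_{g,2s}\to \overline{\X_g^s}$ between suitable compactifications; in practice, only the compact-type part is needed because $\lambda_g$ kills contributions from strata where the Jacobian becomes non-compact-type. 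Since $\lambda_g$ is the pullback of the Hodge bundle on $\overline{\A}_g$ via $\overline{\aj}$, the projection formula yields
\[
\int_{\overline{\M}_{g,2s}}\overline{\aj^*(\Delta_{g,s})}\cdot \overline{\alpha}\cdot \lambda_g \,=\, \int_{\overline{\X_g^s}}\overline{\Delta_{g,s}}\cdot \lambda_g\cdot \overline{\aj}_*(\overline{\alpha}).
\]

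The crux is then the pushforward lemma: $\overline{\aj}_*(\overline{\alpha})\in \R^*(\overline{\X_g^s})$ for every tautological $\alpha$. I would establish this by decomposing $\alpha$ as a sum $\xi_{\mathsf T *}(\mathsf P)$ of classes supported on stable-tree boundary strata $\mathsf T$, with $\mathsf P$ a polynomial in $\psi$ and $\kappa$ classes. On each stratum the Abel-Jacobi map factors through the product of vertex-wise Abel-Jacobi maps, and the push-forward reduces to iterated integrals over universal Jacobians of smaller genera. Each such factor push-forward is expressible in Hodge, theta and Poincar\'e classes via Grothendieck-Riemann-Roch applied to the relative dualizing sheaf -- precisely the calculation used to prove Theorem \ref{thm:proj_product} in Section \ref{pr13}. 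Granting the lemma, the defining orthogonality of $\taut^s$ forces the pairing above to vanish, establishing the Gorenstein kernel property.

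For the nonvanishing claim, the base case $(g,s)=(6,0)$ gives the nontrivial COP element of $\K_{6,0}$, and the case $(g,s)=(5,1)$ should coincide with the generator of the one-dimensional kernel in Theorem \ref{trank1}, via an identification of the Abel-Jacobi extension with the torus-rank $1$ projection appearing there. For general $(g,s)$, I would pair $\aj^*(\Delta_{g,s})$ with an explicit tautological class concentrated on a boundary component carrying an elliptic tail with a marked pair $(p_{2i-1},p_{2i})$, and evaluate the resulting Hodge integral by Faber-Pandharipande formulas \cite{FP1,FP2,FP4}; the explicit coefficient $\tfrac{g}{6\kappa_{g,s}|B_{2g}|}$ from Theorem \ref{thm:proj_product} makes the leading contribution tractable, and the Bernoulli-number growth should rule out cancellation outside a finite set of pairs. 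The principal obstacle is the pushforward lemma: the Abel-Jacobi map degenerates on the non-compact-type boundary of $\overline{\M}_{g,2s}$, and choosing a toroidal compactification of $\X_g^s$ to which $\overline{\aj}$ extends requires careful control of contributions from higher torus-rank strata, where the Mumford/Morita-type vanishing of $\lambda_g$ must be invoked stratum by stratum.
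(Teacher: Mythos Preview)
The statement is a \emph{conjecture}, and the paper does not prove it in full. What the paper establishes is Theorem \ref{prop:lies_in_kernel}: the kernel property for $s\in\{0,1\}$ only, and nonvanishing only for $(g,s)\in\{(6,0),(5,1)\}$ via an $\mathsf{admcycles}$ computation. The paper explicitly states that the kernel property for $s\geq 2$ and the nonvanishing for general $(g,s)$ are open.

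Your proposed route to the kernel property hinges on the ``pushforward lemma'' that $\aj_*(\alpha)\in\R^*(\X_g^s)$ for every tautological $\alpha\in\R^*(\M_{g,2s}^{\ct})$. This is the gap, and it is a serious one. Already for $\alpha=1$, the class $\aj_*(1)=[\AJ_g]$ pushes forward under $\pi^s$ to (a multiple of) the Torelli cycle $[\J_g]\in\CH^*(\A_g)$; if $[\AJ_g]$ were tautological on $\X_g^s$ then $[\J_g]$ would be tautological on $\A_g$, which is a well-known open problem expected to fail for large $g$. Your justification via ``Grothendieck--Riemann--Roch applied to the relative dualizing sheaf --- precisely the calculation used to prove Theorem \ref{thm:proj_product}'' misreads that proof: Theorem \ref{thm:proj_product} is established in Section \ref{pr13} by a fiberwise integration on products of abelian varieties together with the Capelli identity (Lemma \ref{matrix} and Proposition \ref{prop:vertical_projection_product}), with no GRR and no pushforward from moduli of curves involved.

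For comparison, the paper's proof of the $s=1$ kernel property proceeds entirely differently and avoids any pushforward claim. It expresses $\aj^*[\PR_{g,1}]$ as a families Gromov--Witten class via Lemma \ref{lem:generalizedNLGW}, applies the identity $\lambda_g\cap[\Mbar_{g,2}(\pi,1)]^{\vir}=\tfrac{1}{24}\lambda_{g-1}\cap[\Mbar_{g,2}(E,1)]^{\vir}$ from \cite{ILPT}, evaluates the resulting fixed-target class explicitly using \cite{OP23}, and then compares termwise with $\aj^*\taut^1([\PR_{g,1}])\cdot\lambda_g$. The two sides match on the nose, giving $\lambda_g\cdot\overline{\Delta}_{g,1}=0$ in $\R^*(\Mbar_{g,2})$. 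This explicit matching is what is currently unavailable for $s\geq 2$. Your nonvanishing sketch likewise goes well beyond what is known: even for $s=0$ the nonvanishing of $\Delta_{g,0}$ for general $g$ is open.
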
 
\noindent The non-vanishing statement of Conjecture \ref{jecc} implies that $[\PR_{g, s}]$ is not tautological on $\X_g^s$ for all but finitely many pairs $(g\geq 2, s).$

\subsection{Gorenstein kernel}\label{section:intro_GorensteinKernel}
The system of Gorenstein kernels $\big\{\mathsf{K}_{g,n}\subset \R^*(\M_{g,n}^{\ct})\big\}_{g,n}$ satisfies
several properties that follow directly from the definitions:
\begin{enumerate}
\item[(i)] $\mathsf{K}_{g,n} \subset \R^*(\M^{\ct}_{g,n})$ is an ideal,
\item[(ii)] $\pii_{i*}:\mathsf{K}_{g,n}\rightarrow \mathsf{K}_{g,n-1}$,
where $\pii_i:\M_{g,n}^{\ct} \rightarrow \M_{g,n-1}^{\ct}$ is the map forgetting the
$i^{th}$ marking,
\item[(iii)] $\pii^*_{i}:\mathsf{K}_{g,n-1}\rightarrow \mathsf{K}_{g,n}$,
\item[(iv)] $\xi_*: {\mathsf K}_{g_1,n_1+1}\otimes \R^*(\M^{\ct}_{g_2,n_2+1}) \rightarrow
{\mathsf K}_{g_1+g_2,n_1+n_2}$, where
$$\xi:
\M^{\ct}_{g_1,n_1+1}\times \M^{\ct}_{g_2,n_2+1} \rightarrow
\M^{\ct}_{g_1+g_2,n_1+n_2}
$$
is the gluing map.
\end{enumerate}
To state the compatibility with respect to pullback along the gluing map $\xi$, we first define the correspondence map
$$\eta: \R^*(\M^{\ct}_{g_1+g_2,n_1+n_2})\otimes \R^*(\M^{\ct}_{g_2,n_2+1}) \rightarrow 
\R^*(\M^{\ct}_{g_1,n_1+1})\, \, ,\ \ \ 
\eta(\alpha\otimes \beta) = \pii_{1*}\left(\xi^*(\alpha) \cup \pii_2^*(\beta)\right) \, ,
$$
where $\pii_i: \M^{\ct}_{g_1,n_1+1}\times \M^{\ct}_{g_2,n_2+1} \rightarrow
\M^{\ct}_{g_i,n_i+1}
$ are the projections
and
$$\pii_{1*}: \R^*(\M_{g_1,n_1+1}^{\ct}) \times
\R^*(\M_{g_2,n_2+1}^{\ct}) \rightarrow \R^*(\M^{\ct}_{g_1,n_1+1})$$
is defined by the $\lambda_{g_2}$-evaluation on the second factor. Then, we have:
\begin{enumerate}
\item[(v)] $\eta: \mathsf {K}_{g_1+g_2,n_1+n_2}\otimes \R^*(\M^{\ct}_{g_2,n_2+1}) \rightarrow
{\mathsf K}_{g_1,n_1+1}$.
\end{enumerate}
Conjecture \ref{jecc} motivates a natural question.

\begin{question}
    Is $\big\{\mathsf{K}_{g,n}\subset \R^*(\M_{g,n}^{\ct})\big\}_{g,n}$ the smallest 
    system of ideals that
        satisfies properties (i)-(v) 
        and the further condition 
        $\aj^*(\Delta_{g,s}) \in \mathsf{K}_{g,2s}$\,?
\end{question}

In other words, do the elements
$\aj^*(\Delta_{g,s}) \in \mathsf{K}_{g,2s}$ generate all the Gorenstein kernels of $\R^*(\M_{g,n}^{\ct})$
using
properties (i)-(v)? Our evidence is 
limited: the assertion is true for
$\mathsf{K}_{6,0}$, $\mathsf{K}_{7,0}$, and $\mathsf{K}_{5,2}$. These cases correspond to \cite [Theorem 5]{COP},   \cite[Proposition 40]{COP}, and Theorem \hyperref[thm:10']{10$'$} above, respectively, and depend
upon the calculations of the Gorenstein kernels $\mathsf{K}_{g,n}$ for low values of $(g,n)$ in \cite{CLS}. A conjecture, incorporating further Abel-Jacobi pullbacks, is proposed in Section \ref{KerCon}. 

\subsection{Acknowledgments} The calculation of $\Tor^*([\A_2\times \A_{g-2}]) \in \R^*(\M_g^{\ct})$ 
with the goal of determining whether
the homomorphism property holds for
$([\J_g],[\A_2\times \A_{g-2}])$ 
was a group project at ETH Z\"urich in the Fall term of 2025 with both theoretical and computational sides. We dedicate our paper to Carel Faber: his work \cite{Fa} is the starting point of several of the themes developed here, and he pioneered the computational exploration of the tautological ring
of the moduli space of curves.

We thank David Holmes for discussions about
$\A_2 \times \A_{g-2}\rightarrow \A_g$ on the Strand Noordwijk aan Zee in 2023,
Gerard van der Geer for discussions about  the homomorphism property in Leiden in November 2025, 
and 
Aaron Pixton for discussions in 
Ann Arbor in December 2025 about Conjecture \ref{jecc} and Section \ref{ajmaps}. We also benefitted from
conversations about the moduli space of abelian varieties with Sam Grushevsky, Sam Molcho, and Dan Petersen.

S.C. was supported by the SNF \emph{Ambizione} grant PZ00P2-223473.
L.D. was supported by SNF-200020-219369.
A.I.L. was supported by SNF-200020-219369. 
D.N. was supported by a Hermann-Weyl-instructorship at the Forschungsinstitut
f\"ur Mathematik at ETH Z\"urich.
R.P. was supported by SNF-200020-219369 and Swiss\-MAP.
J.S. was supported by a SwissMAP fellowship.

\section{Fiber product with the Torelli map}\label{fptm}
\subsection{Overview} Let $\replacemu=(g_1,\dots,g_k)$ be a partition of $g$ with $g_i\geq 1$. Consider the fiber diagram
\[
\begin{tikzcd}
\mathcal Z\arrow[r] \arrow[d] & \mathcal{M}_g^{\ct} \arrow[d, "\Tor"] \\
\A_{g_1}\times\dots\times \A_{g_k} \arrow[r] & \mathcal{A}_g\,. 
\end{tikzcd}
\]
We describe here the local structure of the stack $\mathcal Z$.
Our presentation follows \cite {COP} where the case $\replacemu=(1,g-1)$  was treated. The structure result for $\mathcal Z$ for all partitions $\replacemu$ is proven by Drakengren \cite{Drakengren}.

\subsection{Proof of Theorem \ref{th:fiberprod_description}} The starting point of the analysis of the
stratification of the 
fiber product $\mathcal Z$  by $\replacemu$-colored extremal trees is the following result \cite[Corollary 3.23]{CG}.
\begin{prop}
    A principally polarized abelian variety $(X,\theta)$ decomposes uniquely, up to reordering, as a product of indecomposable principally polarized abelian varieties.
\end{prop}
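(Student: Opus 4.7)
The plan is to prove the proposition in two independent steps: existence of a decomposition into indecomposable factors, which follows by a straightforward induction, and uniqueness, which constitutes the main content and is proven by reconstructing each factor canonically from the theta divisor. For existence, one inducts on $g = \dim X$: if $(X,\theta)$ is indecomposable, there is nothing to do; otherwise, by definition it admits a nontrivial splitting $(X,\theta) \cong (X_1,\theta_1) \times (X_2,\theta_2)$ with $\dim X_i < g$, and the induction hypothesis applied to each factor yields the full decomposition. The base case $g=1$ is immediate since every elliptic curve with its canonical polarization is indecomposable.

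For uniqueness, the key structural observation is that, for any decomposition $(X,\theta) \cong \prod_{i=1}^{k}(X_i,\theta_i)$ into indecomposables, the equality $h^0(X_i,\theta_i) = 1$ forces the unique section of $\mathcal{O}(\theta)$ to be the external product of the sections on the factors. Writing $\Theta_i \subset X_i$ for the unique effective divisor in $|\theta_i|$ and $p_i \colon X \to X_i$ for the projection onto the $i$th factor, it follows that
\[\Theta = \sum_{i=1}^k p_i^*\Theta_i\]
as divisors on $X$. Since indecomposability of $(X_i,\theta_i)$ is equivalent to irreducibility of $\Theta_i$, each summand is an irreducible component of $\Theta$, producing a bijection between the factors of the decomposition and the irreducible components of the theta divisor.

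To recover the $i$th factor intrinsically, I would associate to each irreducible component $C \subset \Theta$ the identity component of its translation stabilizer
\[K(C) := \{x \in X : x + C = C\}^{0} \subset X\,,\]
which is a connected abelian subvariety. A direct computation yields $K(p_i^*\Theta_i) = \prod_{j \neq i} X_j$, so the quotient $X/K(p_i^*\Theta_i)$ is isomorphic to $X_i$ and inherits a principal polarization representing $\theta_i$. Since both the set of irreducible components of $\Theta$ and the assignment $C \mapsto X/K(C)$ depend only on the polarized datum $(X,\theta)$, the factors of any decomposition into indecomposables are determined up to reordering.

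The main obstacle is justifying the identification $K(p_i^*\Theta_i) = \prod_{j \neq i} X_j$, which reduces to the claim that an indecomposable principally polarized abelian variety has irreducible theta divisor with trivial translation stabilizer. Triviality of the stabilizer follows from principality: the stabilizer of $\Theta$ equals the kernel of the polarization morphism $X \to X^\vee$, which vanishes for a principal polarization. Irreducibility of the theta divisor of an indecomposable factor is the subtler point, which I would prove by contrapositive: given a hypothetical nontrivial splitting $\Theta_i = A + B$ as effective divisors, the translation stabilizers of $A$ and $B$ yield complementary abelian subvarieties of $X_i$ that exhibit $(X_i,\theta_i)$ as a nontrivial product of principally polarized abelian varieties, contradicting indecomposability.
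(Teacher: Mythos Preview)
The paper does not supply its own proof; it cites the result as \cite[Corollary 3.23]{CG}. Your approach is the classical one, and most of it is correct: once one knows that each indecomposable factor $(X_i,\theta_i)$ has irreducible theta divisor, the decomposition $\Theta = \sum_i p_i^*\Theta_i$ exhibits the irreducible components, and your computation $K(p_i^*\Theta_i)^0 = \prod_{j \ne i} X_j$ (using that the divisor stabilizer of $\Theta_i$ equals $\ker \phi_{\theta_i} = 0$, valid since $h^0(\theta_i)=1$) reconstructs each factor with its polarization intrinsically from $(X,\theta)$.

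The gap is in your final paragraph. You rightly flag the implication ``indecomposable $\Rightarrow$ $\Theta_i$ irreducible'' as the crux, but the sketch does not prove it. From a splitting $\Theta_i = A + B$ one does obtain $K(A)^0 \cap K(B)^0 \subset K(\Theta_i) = 0$, but nothing forces $K(A)^0 + K(B)^0 = X_i$: both stabilizers could be trivial. In that case one argues instead that $A$ and $B$ are ample (an effective divisor with finite translation stabilizer on an abelian variety is ample), whence $A^g \ge g!$ and $B^g \ge g!$, contradicting $(A+B)^g = g!$ by positivity of the mixed terms. When one stabilizer is positive-dimensional, one must still produce an honest splitting of the \emph{polarization}, not merely complementary abelian subvarieties; this requires a further argument. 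These steps constitute the actual content of the cited reference, so you should either invoke it directly or fill in the missing case analysis.
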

As an immediate consequence, we can describe when the Jacobian of a compact type curve lies in a product locus.
\begin{cor}\label{cor:prodjac}
    Let $C$ be a compact type curve of genus $g$ with Jacobian isomorphic (as principally polarized abelian varieties) to a product 
    \[
    \mathsf{Jac }(C) \cong X_{g_1} \times\dots\times X_{g_k} \text{ with } (X_{g_i},\theta_i)\in \A_{g_i}\,.
    \]
    Then there is a partition of $\mathrm{Irr}(C)_{>0}$, the set of irreducible components of $C$ of positive genus, into disjoint nonempty subsets $$\mathrm{Irr}(C)_{>0}=S_1\sqcup \ldots \sqcup S_k$$ such that, for all $1\leq i\leq k$, we have $$\prod_{D\in S_i} \mathsf{Jac}(D)\cong X_{g_i}\,.$$ In particular, for all $1\leq i\leq k$, we have $\sum_{D\in S_i} g(D)=g_i$\,.
\end{cor}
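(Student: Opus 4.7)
The plan is to reduce the statement to the uniqueness of the indecomposable decomposition provided by the preceding proposition.

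First, I would invoke the standard fact that for a compact type curve $C$, the principally polarized Jacobian decomposes canonically as a product over the irreducible components of positive genus,
\[
\mathsf{Jac}(C) \;\cong\; \prod_{D \in \mathrm{Irr}(C)_{>0}} \mathsf{Jac}(D),
\]
where the theta polarization on the left corresponds to the product polarization on the right. This follows because, for a compact type curve, the generalized Jacobian is an abelian variety (no multiplicative part arises since the dual graph is a tree), and the canonical principal polarization arises from the sum of the individual theta divisors pulled back from each component.

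Next, I would show that each factor $\mathsf{Jac}(D)$ is \emph{indecomposable} as a principally polarized abelian variety. Since $C$ is of compact type, $D$ is a smooth irreducible curve. The theta divisor of $\mathsf{Jac}(D)$ can be realized as the image of the Abel–Jacobi map from $\mathrm{Sym}^{g(D)-1}(D)$, which is irreducible because $\mathrm{Sym}^{g(D)-1}(D)$ is irreducible. A principally polarized abelian variety that splits nontrivially as $A_1 \times A_2$ has as theta divisor the sum $\Theta_1 \boxtimes A_2 + A_1 \boxtimes \Theta_2$, which is reducible. Hence $\mathsf{Jac}(D)$ cannot split, and the first display gives an indecomposable ppav decomposition of $\mathsf{Jac}(C)$ indexed by $\mathrm{Irr}(C)_{>0}$.

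On the other hand, applying the preceding proposition to each $X_{g_i}$ yields indecomposable ppavs $Y_{i,j}$ with $X_{g_i} \cong \prod_j Y_{i,j}$, so
\[
\mathsf{Jac}(C) \;\cong\; X_{g_1}\times \cdots \times X_{g_k} \;\cong\; \prod_{i=1}^{k}\prod_{j} Y_{i,j}
\]
is a second indecomposable decomposition. The uniqueness part of the proposition provides a bijection between the multisets $\{\mathsf{Jac}(D)\}_{D \in \mathrm{Irr}(C)_{>0}}$ and $\{Y_{i,j}\}_{i,j}$ under which matched factors are isomorphic as ppavs. Setting
\[
S_i \;=\; \{\,D \in \mathrm{Irr}(C)_{>0} \;:\; \mathsf{Jac}(D)\text{ is matched to some }Y_{i,j}\,\}
\]
yields a partition of $\mathrm{Irr}(C)_{>0}$ such that $\prod_{D \in S_i} \mathsf{Jac}(D) \cong \prod_j Y_{i,j} \cong X_{g_i}$, and the equality of dimensions is immediate. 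Nonemptiness of each $S_i$ follows from $g_i \geq 1$, which forces at least one indecomposable factor in the decomposition of $X_{g_i}$.

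The only substantive step is the indecomposability of the Jacobian of a smooth irreducible curve; once that is in hand, the statement is a purely formal consequence of the uniqueness of the indecomposable decomposition. I expect this to be the main (mild) obstacle, and the argument via irreducibility of the theta divisor is the standard route.
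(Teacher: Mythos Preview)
Your proof is correct and follows essentially the same approach as the paper, which presents the corollary as an immediate consequence of the uniqueness of the indecomposable decomposition. You have filled in the details the paper leaves implicit, in particular the indecomposability of the Jacobian of a smooth irreducible curve via irreducibility of its theta divisor.
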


Using Corollary \ref{cor:prodjac}, we stratify the fiber product $\mathcal Z=\Tor^{-1}(\A_{g_1}\times\dots\times \A_{g_k})$. The strata are indexed by $\replacemu$-colored extremal trees $\mathsf T$ as defined  in the statement of Theorem \ref{th:fiberprod_description} of Section \ref{tfp}. For each such tree $\mathsf T$, we have
\[
\M_\mathsf{T}^{\ct} = \prod_{v\in \mathsf{V}(\mathsf{T})} \M_{\mathsf{g}(v),\mathsf{n}(v)}^{\ct}\,,
\]
where $\mathsf{n}(v)$ is the valence of $v$. Using the coloring $\mathsf{c}$, we obtain a canonical Torelli map
\[
\Tor_{\mathsf T}:\M_{\mathsf{T}}^{\ct}\rightarrow \A_{g_1}\times \dots\times \A_{g_k}\,, \quad C\mapsto \mathsf {Jac} (C_1)\times \dots \times \mathsf {Jac}(C_k)\,,\] where the subcurve $C_i$ is the union of all positive-genus components of $C$  whose corresponding vertices are assigned the color $i$. 
Let $$\xi_{\mathsf{T}}:\M_{\mathsf{T}}^{\ct}\rightarrow \M_g^{\ct}$$ be the gluing map associated to the underlying tree $\mathsf{T}$. Since $\xi_{\mathsf{T}}$ and $\Tor_{\mathsf T}$ agree after mapping to $\A_g$, we obtain a canonical map
\[
\epsilon_\mathsf T:\M_\mathsf{T}^{\ct}\rightarrow \mathcal Z\,.
\] The images of these maps stratify the fiber product $\mathcal Z$. 

The next definition allows us to describe incidences between the various strata constructed above. 

\begin{definition}\label{defdef}
Let $\mathsf{T}, \mathsf T'$ be two $\replacemu$-colored extremal
trees with color assignments $$\mathsf c:\mathsf V(\mathsf T)_{>0}\to \{1, \ldots, k\}\, , \quad \mathsf c':\mathsf V(\mathsf T')_{>0}\to \{1, \ldots, k\}\, .$$
A $\mathsf{T}'$-structure on $\mathsf{T}$
is given by a 
partition
of the vertex set of $\mathsf{T}$,
$$\mathsf{V}({\mathsf{T}})=\mathcal V_1\cup \ldots \cup \mathcal V_m\,,$$
together with a surjective map
$$\phi: \mathsf{V}(\mathsf{T}) \rightarrow
\mathsf V(\mathsf T')$$
satisfying the following properties:
\begin{enumerate}
\item[(i)] For each $1\leq i\leq m$, the vertex subset $\mathcal V_i$ determines a connected nonempty subtree of $\mathsf T$.
\item [(ii)] The assignment $\phi$ sends all vertices in $\mathcal V_i$ to the same vertex $\phi(\mathcal V_i)\in \mathsf V(\mathsf T')$. If $i\neq j$, then $$\phi(\mathcal V_i)\neq \phi(\mathcal V_j)\,.$$
\item [(iii)] 
We have $$\sum_{v\in \mathcal V_i} \mathsf g(v)= \mathsf g (\phi(\mathcal V_i))\,.$$ Moreover, all positive genus vertices $v\in \mathcal V_i$ (if any) have the same color and in this case $$\mathsf c(v)=\mathsf c'(\phi(\mathcal V_i))\,.$$ 
\item[(iv)] An edge $e'\in \mathsf{E}(\mathsf{T}')$ connects the vertices
$v',w'\in \mathsf{V}(\mathsf{T'})$ with $v'=\phi(\mathcal V_i)$ and $w'=\phi(\mathcal V_j)$ if and only
if there exists an edge $e\in \mathsf{E}(\mathsf{T})$ which connects a vertex
of $\mathcal{V}_{i}$ to a vertex
of $\mathcal{V}_{j}$.\qedhere
\end{enumerate}

\end{definition}

Item (ii) implies that the sets $\mathcal V_1, \ldots, \mathcal V_m$ are uniquely determined as the fibers of the surjection $\phi$. We note that if $\mathsf T$ has a $\mathsf T'$-structure, then we must have $|\mathsf V(\mathsf T')|\leq|\mathsf V(\mathsf T)|$, with equality if and only if $\phi$ is an isomorphism preserving the genus and color assignments. 

We define $\mathsf{T}$ to be a {\em nontrivial degeneration} of $\mathsf{T}'$ if $\mathsf{T}$ carries a nontrivial $\mathsf{T}'$-structure.
We also refer to $\mathsf{T}'$ as a \emph{smoothing} of $\mathsf{T}$. For each  degeneration $\T$ of $\T'$, the assignment $\phi$ determines a gluing map
\[
\xi_{\T',\T}: \M_{\T}^{\ct}\rightarrow \M_{\T'}^{\ct}\,.
\] 

An irreducible $\replacemu$-colored extremal tree $\mathsf I$ is a $\replacemu$-colored extremal tree without genus $0$ vertices. Irreducible trees $\mathsf I$ admit no nontrivial smoothings and index the irreducible components of the fibered product $\mathcal Z$. Let ${\mathsf I}_1, \ldots, {\mathsf I}_{\ell}$ be irreducible $\replacemu$-colored extremal trees. The intersection of the  images of $\M^{\ct}_{\mathsf {I}_1}, \ldots, \M^{\ct}_{\mathsf{I}_\ell}$ in $\mathcal Z$ is the union of the images of $\M^{\ct}_{\mathsf T}$ taken over all $\replacemu$-colored extremal trees $\mathsf T$ which admit $\mathsf{I}_1, \ldots, \mathsf{I}_{\ell}$-structures. \qed

 \vspace{8pt}
The strict stratum $\M^{\ct,\star}_\T$ associated to a $\replacemu$-colored extremal tree $\T$ is the complement of the images of all gluing maps $\xi_{\mathsf T', \mathsf T}$
for all $\replacemu$-colored nontrivial degenerations $\T'$ of $\T$.
Of course, the strict strata also cover $\mathcal Z.$ 

\subsection{Local equations}
We now describe the stack structure on the fibered product $\mathcal Z$. The map 
\[
\A_{g_1}\times\dots\times\A_{g_k}\rightarrow \A_g
\]
is not an embedding, but the differential is an injection on tangent spaces. Therefore, locally in the analytic topology, $\mathcal Z$ is a closed substack of $\M_g^{\ct}$. Let $x\in \mathcal Z$ be a point in the strict stratum $\M^{\ct,\star}_{\T}$. We take a small open neighborhood $W$ of $x$, which we may assume is the versal deformation space of $x$. We have a map
\[
\nu: W\rightarrow \prod_{e\in \mathsf{E}(\mathsf {T})} \mathbb{C}_e
\]
to the deformation space of the nodes of the underlying curve $C$. Here, $\mathbb{C}_e$ is the one-dimensional versal deformation space of $C$ corresponding to a node, and hence an edge $e$ of $\mathsf{T}$. 

Let $z_e$ be the standard coordinate for each factor $\cc_e$. We write local equations near $x$ using these coordinates. 

\begin{definition} Let $v_1,v_2\in \mathsf{V}(\T)_{>0}$ such that $\mathsf{c}(v_1)\neq \mathsf{c}(v_2)$. Let $\mathsf{P}$ be the minimal path of edges connecting $v_1$ to $v_2$. We say that $\mathsf{P}$ is \emph{critical} if the only vertices of positive genus on the path are $v_1$ and $v_2$. \end{definition}

To each critical path $\mathsf P_{\textrm{crit}},$ we associate the monomial
\[
\mathsf{Mon}(\mathsf{P}_{\mathrm{crit}})=\prod_{e} z_{e}\,,
\] where the product is taken over all edges appearing in $\mathsf P_{\textrm{crit}}.$ We have the following generalization of \cite [Proposition 29]{COP}. 
\begin{thm}[Drakengren \cite{Drakengren}]\label{prop:localequations}
    The local equations for $\mathcal Z$ near the point $x$ in the strict stratum indexed by the $\replacemu$-colored extremal tree $\mathsf{T}$ are given by pullback from $\prod_{e\in \mathsf{E}(\mathsf{T})} \cc_e$ of the monomial set
    \[
    \{\mathsf{Mon}(\mathsf{P}_{\mathrm{crit}}): \mathsf{P}_{\mathrm{crit}} \text{ is a critical path} \}\subset \cc[\{z_e\}_{e\in \mathsf{E}(\mathsf{T})}]\,.
    \]
    In particular, $\mathcal{Z}$ is a reduced Deligne-Mumford stack.
\end{thm}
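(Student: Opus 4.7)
The plan is to work locally in the versal deformation neighborhood $W$ of the point $x=[C] \in \M_{\T}^{\ct,\star}$ and to identify both the set-theoretic support and the scheme structure of $\mathcal{Z} \cap W$ via the behavior of the Torelli map on period matrices. First, I would use the standard splitting $W \cong W_1 \times \prod_{e \in \mathsf{E}(\T)} \cc_e$, with $W_1$ parametrizing deformations of the normalization of $C$ and $z_e$ the smoothing parameter of the node indexed by $e$, so that a point with smoothed node set $T=\{e:z_e\neq 0\}$ corresponds to a curve whose dual graph is the contraction $\T/T$.

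For the set-theoretic statement, I would apply Corollary \ref{cor:prodjac} to the smoothed curve: its Jacobian lies in $\A_{g_1}\times\cdots\times\A_{g_k}$ if and only if the positive-genus vertices of $\T/T$ can be grouped by color consistently with $\replacemu$. Concretely, for every pair $v_1,v_2 \in \mathsf{V}(\T)_{>0}$ with $\mathsf{c}(v_1)\neq\mathsf{c}(v_2)$, the unique path in $\T$ joining them must contain at least one edge not in $T$, i.e., $\prod_{e \in \mathrm{path}(v_1,v_2)} z_e = 0$. If the path has an intermediate positive-genus vertex $w$, then $\mathsf{c}(w)$ differs from one of $\mathsf{c}(v_1), \mathsf{c}(v_2)$, so a strictly shorter path monomial divides the full one. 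Iterating, the minimal generators of the radical of the expected pullback ideal are exactly the critical path monomials $\mathsf{Mon}(\mathsf{P}_{\mathrm{crit}})$, and these are pairwise non-dividing.

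The main obstacle is the refined scheme-theoretic statement, namely that the pullback ideal coincides on the nose with the square-free monomial ideal generated by the $\mathsf{Mon}(\mathsf{P}_{\mathrm{crit}})$, as opposed to having higher multiplicities. For this I would analyze the pullback of the defining ideal of $\A_{g_1}\times\cdots\times\A_{g_k}\subset \A_g$ near the product point $\mathsf{Jac}(C)$. In local Siegel coordinates centered at $\mathsf{Jac}(C)$, that ideal is generated by the off-diagonal period-matrix blocks $\tau_{ij}$ coupling distinct color groups. The classical plumbing/nilpotent-orbit description of degenerating Jacobians on compact type curves expresses the entry of $\tau$ pairing vertices $v_1,v_2 \in \mathsf{V}(\T)$ as $\prod_{e \in \mathrm{path}(v_1,v_2)} z_e$ times a holomorphic unit (after a $W_1$-dependent change of symplectic basis). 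Combined with the injectivity on tangent spaces of the product map $\A_{g_1}\times\cdots\times\A_{g_k}\to \A_g$ noted in the setup, this identifies the pullback ideal as precisely the square-free monomial ideal generated by $\{\mathsf{Mon}(\mathsf{P}_{\mathrm{crit}})\}$. Reducedness then follows automatically, since square-free monomial ideals cut out Stanley-Reisner schemes, which are always reduced.
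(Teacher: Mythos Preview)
The paper does not itself prove this theorem: it is attributed to Drakengren, with only the remark that the proof ``directly studies the Torelli map near the boundary of $\M_g^{\ct}$ using the results of \cite{HN}.'' Your outline follows precisely this route, so there is nothing to compare beyond that one-line description; in particular, the paper offers no alternative argument against which to measure yours.

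Your set-theoretic argument and the reduction to critical-path monomials are correct. On the scheme-theoretic side, two refinements are worth flagging. First, the off-diagonal block of $\tau$ between positive-genus vertices $v_1,v_2$ is a $\mathsf{g}(v_1)\times \mathsf{g}(v_2)$ matrix, and its individual entries are generally \emph{not} units times the path monomial; what is true and sufficient is that the block equals $\bigl(\prod_{e\in\mathrm{path}(v_1,v_2)} z_e\bigr)\cdot M$ with $M$ holomorphic and $M|_{z=0}\neq 0$, so that \emph{some} entry is a unit. Divisibility by the path monomial is easy (setting any $z_e=0$ with $e$ on the path separates $v_1$ from $v_2$ and forces the block to vanish), but the nonvanishing of the leading coefficient along a critical path is the substantive input --- it amounts to base-point-freeness of the canonical system on the end components together with nondegeneracy of the cross-ratio data on the genus-$0$ bridge, and making this precise and uniform over $W_1$ is presumably exactly what Drakengren extracts from \cite{HN}. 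Second, ``nilpotent-orbit'' is a slight misnomer here: compact-type degenerations have trivial local monodromy, so the period map extends holomorphically across the boundary and the asymptotics you need are governed by its Taylor expansion in the $z_e$, not by Schmid's theorem.
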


In \cite{COP}, the local structure of $\mathcal{Z}$ was determined in case $\replacemu=(1,g-1)$ by interpreting $\mathcal{Z}$ as a moduli space of stable
maps to a moving elliptic curve. The
deformation theory of stable maps was then used to prove reducedness.
For the general case, an interpretation in terms of stable maps is not available.
Drakengren \cite{Drakengren}
directly studies 
the Torelli map near the boundary of 
$\M_{g}^{\ct}$ using the results of \cite{HN}.

\section{Excess intersection calculation}\label{exccalc}
\subsection{Overview}
We prove a graph sum formula for 
\begin{equation} \label{ce44}
\Tor^*([\A_{g_1}\times\dots\times\A_{g_k}])
\in \R^*(\M_g^{\ct})
\end{equation}
using the method of \cite[Section 5]{COP}. The calculations here generalize those of \cite{COP}, which concern only the case $\replacemu=(1,g-1)$.

\subsection{Tautological classes} \label{frfr4}
Before studying the pullback \eqref{ce44}, we review first a few standard definitions regarding the tautological ring of the moduli space of curves $\overline{\mathcal M}_{g, n}$. The tautological classes of $\M_{g, n}^{\ct}$ are obtained by restriction $$\mathsf R^*(\overline{\mathcal M}_{g, n})\to \mathsf R^*(\M_{g, n}^{\ct})\to 0\,.$$ A detailed discussion of the tautological rings of $\Mbar_{g, n}$ can be found in \cite {FP3, P1, P2}. The following tautological classes will appear in our calculations:
\begin{itemize}
\item the cotangent classes $\psi_1, \ldots, \psi_n$ given as $$\psi_1=c_1(\mathbb L_1),\ldots , \psi_n=c_1(\mathbb L_n)\,,$$ where the fiber of $\mathbb L_i$ over $(C, x_1, \ldots, x_n)$ equals $T^*_{x_i}C\,,$
\item $\lambda$-classes obtained as Chern classes of the Hodge bundle $\mathbb E_g\to \M_{g, n}^{\ct}$, $$\lambda_i=c_i(\mathbb E_g)\,,$$ pulled back from $\A_g$ under the Torelli map, 
\item the $\kappa$-classes, $$\kappa_i=\pi_*\left(c_1(\omega_\pi(s_1+\ldots+s_n))^{i+1}\right)$$ where $s_1, \ldots, s_n$ are the universal sections of the universal curve over $\M_{g, n}^{\ct}.$
\end{itemize}

The tautological ring $\R^*(\M^{\ct}_{\mathsf{T}}) \subset \CH^*(\M_{\mathsf{T}}^{\ct})$
for any $\replacemu$-colored extremal tree is
generated by the pullbacks of the
tautological rings of the factors of the product
$$\mathcal M_{\mathsf T}^{\mathsf {ct}} = \prod_{v \in \mathsf {V}(\mathsf T)} \M_{\mathsf g(v), \mathsf n(v)}^{\ct}\, .$$ Pushforward under the gluing map $\xi_{\mathsf T}: \M_{\mathsf T}^{\mathsf {ct}}\to \M_g^{\ct}$ sends tautological classes on $\mathcal M_{\mathsf T}^{\mathsf {ct}}$ to tautological classes on $\M_g^{\ct}.$
\subsection{Proof of Theorem \ref{thm:contributions}}
We will express the class \eqref{ce44}
as a
sum of contributions $\mathsf{Cont}_\mathsf{T}$ supported on $\M^{\ct}_\mathsf{T}$ for each $\replacemu$-colored extremal tree $\mathsf{T}$. Only trees $\mathsf{T}$ with at most
$$
d=\mathsf{cod}_{\underline {g}}=\codim_{\A_g}(\A_{g_1}\times\dots\times \A_{g_k})=\sum_{1\leq i<j\leq k}g_i g_j
$$
edges contribute. The corresponding contributions are computed inductively, with the base cases provided by the  irreducible $\replacemu$-colored extremal trees.

Each contribution is expressed in terms of the Chern classes of the normal bundle of the product $$\A_{g_1}\times\dots\times \A_{g_k}\to \A_g$$ and the Chern classes of the normal bundles of the strata of $\mathcal Z$ indexed by the $\replacemu$-colored extremal trees. These contributions can be found using excess residual intersections as in \cite[Chapter IX]{Fulton}. The exact residual terms are {\it universal expressions} depending only on the normal bundle data, so we can compute them in a convenient local model. The local equations in Theorem \ref{prop:localequations} are used for the local model
calculations.

Let $c$ be the number of critical paths between all possible unordered pairs of positive genus vertices in $\T$ that have distinct colors. A simple check shows that $c\leq d$. 
Let $\N$ be a rank $d$ vector bundle on $\prod_{e\in \mathsf{E}(\T)}\cc_e$ of the form
\[
\N = \O^{c}\oplus L_1\oplus \dots\oplus L_{d-c}\,,
\]
where the $L_i$ are arbitrary torus equivariant line bundles on $\prod_{e\in \mathsf{E}(\T)}\cc_e$. Set 
\[
s = (\mathsf{Mon}(\mathsf{P}^1_{\mathrm{crit}}),\dots,\mathsf{Mon}(\mathsf{P}^c_{\mathrm{crit}}),0,\dots,0)\,.
\]
Let $\mathsf{Z}(s) \subset \prod_{e\in \mathsf{E}(\T)}\cc_e$ be the zero locus of the section $s$. The zero locus $\mathsf{Z}(s)$
has precisely the equations of our local model by Theorem
\ref{prop:localequations} and
is independent of the ordering of the
monomials in $s$. 
We may calculate the contribution of $\mathsf{T}$ in the excess intersection problem for $c_{d}(\N)$ determined by $\mathsf{Z}(s)$. We start with the equation
\[
c_d(\N) = (\ell_1\cdots\ell_{d-c})\prod_{\mathsf{P}_{\mathrm{crit}}} (\sum_{e\in \mathsf{P}_{\mathrm{crit}}} z_e)\,,
\]
where the $\ell_i$ are the equivariant first Chern classes of the line bundles $L_i$.

\begin{itemize}
    \item [(i)] Consider first the case where the $\replacemu$-colored extremal tree $\mathsf{I}$ has no genus $0$ vertices. Then, $\mathsf{I}$ corresponds to an irreducible component of $\mathcal Z$, and every edge  of $\mathsf{I}$ is a critical path. The contribution $\mathsf{Cont}_\mathsf{I}$ can be computed by the usual excess intersection formula:
    \begin{equation}\label{compcont}
    \mathsf{Cont}_\mathsf{I}=\left[\frac{c(\N)}{\prod_{e\in \mathsf E(\mathsf I)} \,(1+z_e)\,}\right]_{d-c}\,.
    \end{equation} The subscript indicates that only the part of degree $d-c$ is taken. 
    The pushforward $\iota_{{\mathsf I}*}$ to the ambient torus-equivariant $\prod_{e\in \mathsf{E}(\T)}\cc_e$ is found by multiplying by the top Chern class of the normal bundle, which equals $z_1\cdots z_c$. Therefore,
    \[
    \iota_{\mathsf{I}*}\mathsf{Cont}_{\mathsf{I}}=z_1\cdots z_c\left[\frac{c(\N)}{\prod_{e\in \mathsf E(\mathsf I)} \,(1+z_e)\,}\right]_{d-c}\,.
    \]
    \item [(ii)] Next,  let $\mathsf{T}$ be an arbitrary $\replacemu$-colored extremal tree. By induction, we can assume we have computed $\mathsf{Cont}_{\mathsf{T}'}$ for all smoothings $\mathsf{T}'$ of $\mathsf{T}$.
    We set
    \begin{equation}\label{inductive}
    \mathsf{Cont}_{\mathsf{T}}\cdot\prod_{e\in \mathsf{E}(\mathsf{T})} z_e=c_{d}(\N)-\sum_{\mathsf{T}'}\iota_{\mathsf{T}'*}\mathsf{Cont}_{\mathsf{T}'}\,.
    \end{equation}
    Equation \eqref{inductive} determines $\mathsf{Cont}_{\mathsf{T}}$.   
    \end{itemize}

The formula for
$\mathsf{Cont}_{\mathsf T}$
in terms of tautological classes is then obtained via substitution of
variables:
\begin{itemize}
    \item[$\bullet$] we
replace each edge variable $z_e$ by the
normal factor corresponding
to the smoothing of the edge $e$ (the sum of tangent
lines corresponding to the
two half-edges of $e$),
 \item[$\bullet$] we replace 
the Chern classes of $\mathcal{N}$
by the Chern classes of the normal bundle of the 
product
$$\A_{g_1}\times\dots\times \A_{g_k}\to \A_g\, .$$
\end{itemize}
In the end, $\mathsf{Cont}_{\mathsf T}$ is expressed in terms
of tautological $\psi$ and $\lambda$ classes obtained from the
moduli of curves. \qed

\subsection{Examples}
We present  full calculations of the contributions of Theorem \ref{thm:contributions} in a few cases.

\begin{example}
    Let $\mathsf{I}$ be an irreducible $\replacemu$-colored extremal tree, and let $$\mathsf V(\mathsf I)=S_1\sqcup \ldots \sqcup S_k$$ be the partition of the vertices determined by the $k$ colors. Then, $\M_{\mathsf{I}}^{\ct}$ determines
    an irreducible component of $\mathcal Z=\Tor^{-1}(\A_{g_1}\times\dots\times \A_{g_k})$ with contribution calculated by \eqref{compcont}. The normal bundle to the product $\A_{g_1}\times\dots\times \A_{g_k}\rightarrow \A_g$ is 
    \[
    \Sym^2(\mathbb{E}_{g_1}^{\vee}\boxplus\dots \boxplus \mathbb{E}_{g_k}^{\vee})-\sum_{i=1}^k \Sym^2(\mathbb{E}_{g_i}^{\vee}) = \sum_{i < j} \mathbb{E}^{\vee}_{g_i}\boxtimes\mathbb{E}^{\vee}_{g_j}\,.
    \]
    When pulled back to $\M^{\ct}_{\mathsf{I}}$, the normal bundle splits as 
    \[
    \sum_{i<j} \left(\bigoplus_{v\in S_i}\mathbb{E}^{\vee}_{\mathsf g(v)}\right)\boxtimes \left(\bigoplus_{w\in S_j}\mathbb{E}^{\vee}_{\mathsf g(w)}\right)\,.
    \]
    The normal bundle of $\M^{\ct}_{\mathsf{I}}$ in $\M_g^{\ct}$ is the sum of factors corresponding to the smoothings of each node. Therefore, the contribution of $\mathsf{I}$ is
    \begin{equation}\label{excess}
     \mathsf{Cont}_\mathsf{I}=
    \left[\frac{\prod_{i<j} \prod_{v\in S_i}\prod_{w\in S_j} c\left(\mathbb{E}^{\vee}_{\mathsf g(v)}\boxtimes \mathbb E^{\vee}_{\mathsf g(w)}\right)}{\prod_{e\in \mathsf{E}(\mathsf{I})} (1-\psi_e'-\psi''_e)}\right]_{d-c}\,,\end{equation} where $\psi'_e, \psi''_e$ are the cotangent classes at the node associated to $e$. 
\end{example}

For the partition $\replacemu=(1,g-1)$, several explicit computations involving trees $\mathsf T$ which are not irreducible are detailed in \cite[Examples 30-35]{COP}. We record here a new example. 

\begin{example} 
    Consider the partition $\replacemu=(2,4)$ for $g=6$. Then, $d=8$. We use two colors, blue and green, to paint the positive genus vertices. Consider the following $\replacemu$-colored extremal tree $\T$:

    \begin{center}
\begin{tikzpicture}[scale=.9]

    \node[circle, fill=gray!40, minimum size=6pt, inner sep=5.5pt] (T) at (0,0) {};
    \node[circle, fill=RoyalBlue!20, minimum size=6pt, inner sep=3pt] (A) at (-1.4,-1.3) {1};
    \node[circle, fill=RoyalBlue!20, minimum size=6pt, inner sep=3pt] (B) at (1.4,-1.3) {1};
    \node[circle, fill=YellowGreen!30, minimum size=6pt, inner sep=3pt] (C) at (0,1.6) {4};

    \draw[very thick] (T) -- (A);
    \draw[very thick] (T) -- (B);
    \draw[very thick] (T) -- (C);

\end{tikzpicture}
\end{center}
Here, the gray vertex of $\mathsf T$ has genus $0$. We decorate the edge incident to the vertex of genus $4$ by $z_3$, while $z_1$, $z_2$ correspond to the remaining two edges. The tree $\mathsf T$ admits two smoothings $\mathsf R$ and $\mathsf{S}$ respectively:\vskip.2in

\begin{center}
\begin{tikzpicture}[scale=.55]

\node[circle, fill=YellowGreen!30, minimum size=6pt, inner sep=3pt] (D) at (0-6,0) {4};
    \node[circle, fill=RoyalBlue!20, minimum size=6pt, inner sep=3pt] (E) at (3-6,0) {2};

    \draw[very thick] (D) -- (E);

    \node[circle, fill=RoyalBlue!20, minimum size=6pt, inner sep=3pt] (A) at (3,0) {1};
    \node[circle, fill=YellowGreen!30, minimum size=6pt, inner sep=3pt] (B) at (6,0) {4};
    \node[circle, fill=RoyalBlue!20, minimum size=6pt, inner sep=3pt] (C) at (9,0) {1};

    \draw[very thick] (A) -- (B) -- (C);

\end{tikzpicture}
\end{center}
\vskip.1in
The smoothings $\mathsf{R}$ and $\mathsf{S}$ correspond to irreducible components of $\Tor^{-1}(\A_2\times \A_{4})$. Therefore, we have
$$\iota_{\mathsf{R}*}\mathsf{Cont}_{\mathsf{R}}
    =z_3\left[\frac{c(\mathcal N)}{1+z_3}\right]_{7}, \quad  
    \iota_{\mathsf{S}*}\mathsf{Cont}_{\mathsf{S}}
    =z_1z_2\left[\frac{c(\mathcal N)}{(1+z_1)(1+z_2)\,}\right]_{6}\,,$$
    where $$c(\mathcal N)=(1+\ell_1)\cdots (1+\ell_6)(1+z_1+z_3)(1+z_2+z_3)\,.$$
     By \eqref{inductive}, we have 
    \begin{align*}
    \mathsf{Cont}_{\mathsf{T}}\cdot z_1z_2z_3=(z_1+z_3)(z_2+z_3)(\ell_1\cdots\ell_6)-\iota_{\mathsf R*}\mathsf{Cont}_{\mathsf{R}}-\iota_{\mathsf S*}\mathsf{Cont}_{\mathsf{S}}\,.
    \end{align*} 
    After solving for $\mathsf {Cont}_{\mathsf T}$, we obtain: 
    \begin{align*}
    \mathsf{Cont}_{\mathsf{T}}
    =&
    -3 c_5(\N)
    +(4z_1 + 4z_2 + 6z_3)\, c_4(\N)
    - (5z_1^2 + 5z_1z_2 + 10z_1z_3 + 5z_2^2 + 10z_2z_3 + 10z_3^2)\,c_3(\N)
    \\
    +& (6z_1^3 + 6z_1^2z_2 + 15z_1^2z_3 + 6z_1z_2^2 + 15z_1z_2z_3 + 20z_1z_3^2 + 6z_2^3 + 15z_2^2z_3 + 20z_2z_3^2 + 15z_3^3) \,c_2(\N)
    \\
    -& (7z_1^4 + 7z_1^3z_2 + 21z_1^3z_3 + 7z_1^2z_2^2 + 21z_1^2z_2z_3 + 35z_1^2z_3^2 + 7z_1z_2^3 
    \\
    &+ 21z_1z_2^2z_3 + 35z_1z_2z_3^2 + 35z_1z_3^3 + 7z_2^4 + 21z_2^3z_3 + 35z_2^2z_3^2 + 35z_2z_3^3 + 21z_3^4)\,c_1(\N)
    \\
    +& (8z_1^5 + 8z_1^4z_2 + 28z_1^4z_3 + 8z_1^3z_2^2 + 28z_1^3z_2z_3 + 56z_1^3z_3^2 + 8z_1^2z_2^3 + 28z_1^2z_2^2z_3+56z_1^2z_2z_3^2
    \\
    &+   70z_1^2z_3^3 + 8z_1z_2^4 + 28z_1z_2^3z_3 + 56z_1z_2^2z_3^2 + 70z_1z_2z_3^3 + 56z_1z_3^4 + 8z_2^5 + 28z_2^4z_3 + 56z_2^3z_3^2
    \\
    &+ 70z_2^2z_3^3 + 56z_2z_3^4 + 28z_3^5)\,. \qedhere
    \end{align*}
 
\end{example}
\subsection{Computer implementation}\label{compimp}

Theorem \ref{thm:A3A3_A2Ax} concerns partitions with $2$ parts $\replacemu=(h, g-h)$, specifically  for $\replacemu$ in the set 
\begin{equation}\label{list} \{\,(2,2)\, , (2,3) \,, (2, 4)\, , (3, 3)\, ,(2, 5)\, , (2, 6)\, 
\}\,.\end{equation} 
For these cases, the number of $\replacemu$-colored extremal trees with at most $d$ edges is 
$$9\,, 37\,,  153\, , 210\, ,622\, , 2569\,,
$$ respectively. A computer calculation yields the required Torelli pullbacks in $\mathsf R^*(\M_g^{\ct}).$

We have implemented the computation of $\mathsf{Cont}_{\mathsf{T}}$ using \eqref{inductive} in the \texttt{Julia} package $\mathsf{TorelliTrees.jl}$ \cite{code}.
Given the partition $\replacemu$, we first enumerate all $\replacemu$-colored extremal trees up to isomorphism in increasing order of the number of vertices. The smoothings are obtained by first computing the \emph{minimal smoothings}: 
the smoothing that can not be obtained by applying two or more non-trivial smoothings.
Each edge $\{v_1,v_2\}\in \mathsf{E}(\T)$ with $\mathsf{g}(v_1)\mathsf{g}(v_2)=0$ determines a unique minimal smoothing. 
All other smoothings are obtained as iterations of minimal smoothings.
We then apply \eqref{inductive} to obtain $\textsf{Cont}_{\T}$ for each $\T$.

Finally, we substitute for $\{z_e\}_{e\in\mathsf{E}(\T)}$ and the Chern classes of $\mathcal{N}$ as explained above.
The output generated is $\mathsf{admcycles}$ code, which, when executed, provides an expression for $$\Tor^*([\A_{h}\times \A_{g-h}])\in \R^*(\M_g^{\ct})$$ in terms of tautological classes. 

To illustrate, the $9$ graphs with at most $4$ edges which appear for the partition $\replacemu=(2, 2)$ are shown below. There are two colors, blue and green, and as usual, the gray vertices have genus $0$. The Torelli fiber product has $4$ irreducible components corresponding to the following graphs: 
\vskip.2in
\begin{center}
\begin{tikzpicture}[
  v/.style= {circle, fill=gray!40, minimum size=6pt, inner sep=3pt},
  vg/.style= {circle, fill=YellowGreen!30!, minimum size=6pt, inner sep=3pt},
   vb/.style= {circle, fill=RoyalBlue!30!, minimum size=6pt, inner sep=3pt},
  e/.style={very thick},
x=1.5cm, y=1.5cm
]

\node[vb] (A1) at (0,0) {$2$};
\node[vg] (A2) at (1,0) {$2$};
\draw[e] (A1)--(A2);

\node[vb] (BL1) at (2.5,0) {$1$};
\node[vg] (BL2) at (3.5,0) {$2$};
\node[vb] (BL3) at (4.5,0) {$1$};
\draw[e] (BL1)--(BL2)--(BL3);

\node[vg] (BR1) at (6, 0) {$1$};
\node[vb] (BR2) at (7,0) {$2$};
\node[vg] (BR3) at (8,0) {$1$};
\draw[e] (BR1)--(BR2)--(BR3);

\node[vb] (C1) at (0+3,-1) {$1$};
\node[vg] (C2) at (1+3, -1) {$1$};
\node[vb] (C3) at (2+3,-1) {$1$};
\node[vg] (C4) at (3+3,-1) {$1$};
\draw[e] (C1)--(C2)--(C3)--(C4);

\end{tikzpicture}
\end{center}
The remaining $5$ graphs correspond to the intersections of these components which appear in the calculation.  
\vskip.2in
\begin{center}
\begin{tikzpicture}[
  v/.style= {circle, fill=gray!40, minimum size=6pt, inner sep=3pt},
  vg/.style= {circle, fill=YellowGreen!30!, minimum size=6pt, inner sep=3pt},
   vb/.style= {circle, fill=RoyalBlue!30!, minimum size=6pt, inner sep=3pt},
  e/.style={very thick},
x=1.4cm, y=1.3cm
]

\node[vb] (D1) at (0,-1) {$1$};
\node[v] (D2) at (1,-1) {\phantom{0}};
\node[vb] (D3) at (2,-1) {$1$};
\node[vg] (D4) at (1,+0) {$2$};
\draw[e] (D1)--(D2)--(D3);
\draw[e] (D2)--(D4);

\node[vg] (E1) at (3.5,-1) {$1$};
\node[v] (E2) at (4.5,-1) {\phantom{0}};
\node[vg] (E3) at (5.5,-1) {$1$};
\node[vb] (E4) at (4.5,+0) {$2$};
\draw[e] (E1)--(E2)--(E3);
\draw[e] (E2)--(E4);

\node[vb] (F1) at (1+6,-1) {$1$};
\node[v] (F2) at (2+6,-1) {\phantom{0}};
\node[vb] (F3) at (3+6,-1) {$1$};
\node[vg] (F4) at (2+6,0) {$1$};
\node[vg] (F5) at (2+6,-2) {$1$};
\draw[e] (F1)--(F2)--(F3);
\draw[e] (F2)--(F4);
\draw[e] (F2)--(F5);

\node[vb] (G1) at (0+1,-3) {$1$};
\node[vg] (G2) at (1+1,-3) {$1$};
\node[v] (G3) at (2+1,-3) {\phantom{0}};
\node[vb] (G4) at (3+1,-3) {$1$};
\node[vg] (G5) at (2+1,-4) {$1$};
\draw[e] (G1)--(G2)--(G3)--(G4);
\draw[e] (G3)--(G5);

\node[vg] (I1) at (4.5+1,-3) {$1$};
\node[vb] (I2) at (5.5+1,-3) {$1$};
\node[v] (I3) at (6.5+1,-3) {\phantom{0}};
\node[vb] (I4) at (7.5+1,-3) {$1$};
\node[vg] (I5) at (6.5+1,-4) {$1$};
\draw[e] (I1)--(I2)--(I3)--(I4);
\draw[e] (I3)--(I5);
\end{tikzpicture}
\end{center}

\subsection {Proof of Theorem \ref{thm:A3A3_A2Ax}}\label{general} For the $6$ partitions $\replacemu=(h, g-h)$ listed in \eqref{list}, the procedure described in Section \ref{compimp} confirms that \begin{equation}\label{vanishgr}\Tor^*\left([\A_{h}\times \A_{g-h}]\right)=\Tor^*\left(\taut ([\A_{h}\times \A_{g-h}])\right)\in \R^*(\M_g^{\ct})\,.\end{equation}
The homomorphism property stated in Theorem \ref{thm:A3A3_A2Ax} then follows. Indeed, pushing forward \eqref{vanishgr} under $\Tor$, we obtain
$$
[\A_{h}\times \A_{g-h}]\cdot [\J_g]=\mathsf{taut}([\A_h\times \A_{g-h}])\cdot [\mathcal J_g]\in \CH^*(\A_g)\,.
$$
Applying the operator $\mathsf{taut}$ and using \eqref{tss}, we conclude
$$
\mathsf{taut}([\A_h\times \A_{g-h}]\cdot [\J_g])=\mathsf{\taut}(\taut([\A_h\times \A_{g-h}])\cdot [\J_g])=\mathsf{taut}([\A_h\times \A_{g-h}])\cdot \taut([\J_g])\,,
$$
as required. \qed

\vspace{8pt}

Motivated by the calculations for $g\leq 8$, we expect the equality \begin{equation}\label{vanishx}\Tor^*([\A_{2}\times \A_{g-2}])\stackrel{?}{=}\Tor^*(\taut ([\A_{2}\times \A_{g-2}]))\in \mathsf R^{2g-4}(\M_g^{\ct})\,\end{equation} to hold for all $g\geq 2$. The homomorphism property for the pair $([\A_2\times \A_{g-2}], [\mathcal J_g])$
is then implied.

Equality \eqref{vanishx} is equivalent to an identity for Hodge integrals. The $\lambda_g$-pairing $$\mathsf{R}^1(\M_g^{\ct})\times \mathsf{R}^{2g-4}(\M_g^{\ct})\to \mathsf R^{2g-3}(\M_g^{\ct})\stackrel{\epsilon}{\longrightarrow} \mathbb Q$$ is perfect  by \cite[Theorem 4]{CLS}.  Furthermore, $\mathsf{R}^1(\M_g^{\ct})$ is generated by the boundary divisors and the class $\kappa_1$, see \cite [Theorem 1]{arbarellocornalba}. The proof of \cite [Theorem 4]{COP} shows that both sides of \eqref{vanishx} intersect the boundary divisors trivially. The key ingredient here is the assertion that all product loci in $\A_g$ intersect trivially \cite [Proposition 17]{COP}, so in particular 
$$[\A_2\times \A_{g-2}]\cdot [\A_r\times \A_{g-r}]=0 \in \CH^*(\A_g)\,.$$ Thus, equation \eqref{vanishx} is equivalent to 
\begin{equation} \label{cvvbx}
    \kappa_1\cdot \Tor^*([\A_{2}\times \A_{g-2}])=\kappa_1\cdot \Tor^*(\taut([\A_{2}\times \A_{g-2}]))\in \R^{2g-3}(\M_g^{\ct})\,.
    \end{equation}
    By applying Theorem \ref{thm:contributions} and the $\lambda_g$-evaluation $\epsilon$,  \eqref{cvvbx} can be equivalently rewritten as an identity involving Hodge integrals: \begin{equation}\label{numericalx}\sum_{\mathsf T \in \mathsf{Tree}(2, g-2)} \frac{1}{|\mathsf{Aut}(\mathsf{T})|} \int_{{\overline{\mathcal M}_{\mathsf T}}} \xi_{\mathsf T}^*(\kappa_1\cdot \lambda_g)\cdot 
    \mathsf{Cont}_{\mathsf T}=\int_{\Mbar_g} \kappa_1\cdot \lambda_g\cdot \taut([\A_2\times \A_{g-2}])\,,\end{equation} where $\mathsf {Cont}_{\mathsf T}$ denotes the canonical extension to the compactified space $$\overline{\mathcal M}_{\mathsf T} = \prod_{v \in \mathsf {V}(\mathsf T)} \overline{\M}_{\mathsf g(v), \mathsf n(v)}\,.$$
Using the formula for $\taut([\A_2\times \A_{g-2}])$ in Section \ref{gconag}, the right side of \eqref{numericalx} becomes $$\frac{1}{360} \cdot \frac{g(g-1)}{|B_{2g}| |B_{2g-2}|} \int_{\Mbar_g} \kappa_1 \cdot \lambda_g \cdot \lambda_{g-1} \lambda_{g-3}=\frac{g(g-1)}{1440}\cdot \frac{1}{(2g-2)!}\,,$$ where the Hodge integral above is computed by \cite[Theorem 3]{FP2}, in particular equation (50) therein. The evaluation of the left hand side of \eqref{numericalx} for all $g\geq 2$ is an open question. 

By similar reasoning, the homomorphism property for the pair $([\A_1\times \A_1\times \A_{g-2}], \,[\J_g])$, established in \cite {aitordenisjeremy}, is equivalent to the identity  
\begin{equation}\label{a1a1x}\sum_{\mathsf T \in \mathsf{Tree}(1, 1, g-2)} \frac{1}{|\mathsf{Aut}(\mathsf{T})|} \int_{{\overline{\mathcal M}_{\mathsf T}}} \xi_{\mathsf T}^*\lambda_g\cdot 
    \mathsf{Cont}_{\mathsf T}=\frac{1}{288}\cdot \frac{1}{(2g-2)!}\,,\end{equation}
    for all $g\geq 2$.
    The connection (if any)  between identities \eqref{numericalx} and \eqref{a1a1x} is not clear. 

\section{Wall-crossing calculation} \label{wallcr}
\subsection{Overview}
After reviewing Fulton-MacPherson
degenerations
and the definition of unramified maps, we present the calculation of
\begin{equation} \label{ce445}
\Tor^*([\A_{2}\times\A_{g-2}])
\in \R^{*}(\M_g^{\ct})
\end{equation}
using the wall-crossing formula of Theorem \ref{thm:wallcr}. 
The families Gromov-Witten class
$[\M^{\ct}_{g}(\pi_2)]^{\textnormal{red}}$, computed
by wall-crossing, is pushed-forward
along the morphism 
$\M^{\ct}_{g}(\pi_2)\rightarrow \M_g^{\ct}$
to obtain $\Tor^*([\A_2\times \A_{g-2}])\in \R^*(\M_{g}^{\ct})$. As noted in Section \ref{general}, the homomorphism property
for the pair $([\J_g],[\A_2\times \A_{g-2}])$ is implied by the equality
\begin{equation} \label{f55g}
\Tor^*([\A_2\times \A_{g-2}])= 
\Tor^*(\taut([\A_2\times \A_{g-2}])) \in \R^*(\M_g^{\ct})\, .
\end{equation}
We have checked
\eqref{f55g}  for $2\leq g \leq 8$ by this approach. 

\subsection{Fulton-MacPherson degenerations}
Stable maps from curves to a target variety allow for nodal degenerations of the domain curves, but do not degenerate the target. Unramified  maps \cite {KKO}, however, permit the target to degenerate. Before defining unramified maps, we recall the definition of the degenerations of the targets which arise.

Let $X$ be a nonsingular projective variety of dimension $r$, which for us will be an abelian variety. The configuration space of $n$ distinct labeled points in $X$ can be compactified by means of the Fulton-MacPherson space $\FM_{n}(X)$ constructed in \cite {FM}. The space $\FM_{n}(X)$ is obtained as an iterated blowup of the product $X^n$ along various diagonals in a specified order, encoding how the points of $X$ collide. There is a universal family \begin{equation}\label{uff} \mathcal W\to \FM_{n}(X)\,,\end{equation}
constructed as
an iterated blowup of $\FM_{n}(X)\times X,$ which carries a projection $$\rho_n: \mathcal W\to X\,.$$ {\it Fulton-MacPherson degenerations} $$\rho: W\to X$$ are obtained by taking $W$ to be a fiber of the universal family \eqref{uff}, for arbitrary $n$, and letting $\rho$ denote the restriction of $\rho_n$ to the fiber $W$.
An example of a Fulton-MacPherson degeneration is
\begin{equation}\label{1step}
W=\mathrm{Bl}_0(X)\cup_{E=D}\p^r,
\end{equation}
which is given by the degeneration to the normal cone of the point $0 \in X$. Here, $D$ is the divisor at infinity in $\mathbb P^{r}\cong \mathbb P(T_0X\oplus \mathbb C)$ and $E\cong \mathbb P(T_0X)$ is the exceptional divisor of the blowup of $X$. In general, $W$ is an iterated degeneration to the normal cones of regular points. 

\subsection{Unramified maps}
Let $(\rho:W\rightarrow X,x_1,\hdots, x_m)$ denote a marked Fulton-MacPherson degeneration of an abelian variety $X$. 
We require that the markings $x_1, \ldots, x_m$ are pairwise distinct and are contained in the nonsingular locus of $W$.

For a  principally polarized abelian variety $(X,\theta)$, we are
interested in  the {\em minimal curve class} 
$$\beta_{\min}=\frac{\theta^{r-1}}{(r-1)!}\in H_2(X,\mathbb{Z})\, .$$ We give the definition of unramified maps from a nodal curve $C$ to $W$ in the class $\beta_{\min}$ with prescribed multiplicity profiles at the markings $x_i$.

\begin{definition}
Let $(\rho:W\rightarrow X,x_1,\ldots,x_m)$ be a marked Fulton-MacPherson degeneration of $X$. Let $C$ be a curve with (at worst) nodal singularities. A map $f:C \rightarrow W$ has {\em curve class} $\beta_{\min}\in H_2(X,\mathbb{Z})$ if 
\begin{equation}
(\rho \circ f)_*[C]=\beta_{\min}\, . \qedhere
\end{equation}
\end{definition}

\begin{definition}[\hspace{-0.5pt}\cite{KKO}, \cite{Nesterov}]\label{ununun}
A map $f\colon C \rightarrow W$ is {\em unramified} if
\begin{itemize}
\item [(i)] $f$ is a logarithmic morphism \footnote{We use the canonical logarithmic structures coming from the singularities of $C$ and $W$, respectively.} with nowhere vanishing logarithmic derivative $$df^{\log} \colon T^{\log}_{C} \rightarrow f^*T^{\log}_{W}\, ,$$
\item [(ii)] the group of automorphisms $$\{(\gamma_1,\gamma_2)\in \Aut(C)\times \Aut(W,x_1, \hdots, x_m) \mid f\circ \gamma_1=\gamma_2\circ f \}$$ is finite.\footnote{For a general curve class $\beta$, a more intricate stability condition has to be imposed: see \cite[Definition 3.1.1 (4)]{KKO}.} \qedhere
\end{itemize}
 
\end{definition}

Fix ordered partitions $(\mu^1, \ldots, \mu^m)$, and write $$\mu^i=(\mu^i_1 ,\hdots, \mu^i_{\ell(\mu^i)})\,$$ for their parts. Let $\{p_{ij}\}$ be distinct  nonsingular points on $C$, for $1\leq i\leq m$, $1\leq j\leq \ell(\mu^i).$ 
\begin{definition}
The map $f:(C, \{p_{ij}\})\to (W, \{x_i\})$ has {\em multiplicity $\mu^i$ at the marked points $x_i \in W$} provided
$$
f^{-1}(x_i)=\sum^{\ell(\mu^i)}_{j=1}\mu^i_j \,p_{ij}\,,
$$
for $1\leq i\leq m$. We continue to require conditions (i) and (ii) of Definition \ref{ununun}, modified so  the logarithmic structure and the automorphisms of the domain also take into account the markings $\{p_{ij}\}$. 
\end{definition}

Let $\pi_r: \mathcal X_r\to \A_r$ denote the universal principally polarized abelian variety. Write
\[
\M^{\ct,\mathrm{un}}_{g_0}(\pi_r,\mu^1,\hdots,\mu^m)^\bullet
\]
for the moduli space of unramified maps parameterizing: \begin{itemize} 
\item marked Fulton-MacPherson degenerations 
$$\rho:W \to X,\, x_1, \ldots, x_m$$
of arbitrary principally polarized abelian varieties $(X, \theta)\in \A_r$,
\item unramified maps $$f:(C, \{p_{ij}\}) \to (W, \{x_i\})$$ having curve class $\beta_{\min}\in H_2(X, \mathbb Z)$ and multiplicity $\mu^i$ at the marked points $x_i\in W$, $1\leq i\leq m$. 
\end{itemize} 
The domain curves $C$ are assumed to be (at worst) nodal, of compact type, possibly disconnected, and of genus $g(C)=g_0$. The maps are considered up to automorphisms of Fulton-MacPherson degenerations, which include translations of abelian varieties and affine automorphisms of projective spaces. The markings of the domain are not explicitly indicated in the notation, for ease of reading. 
The analogously defined moduli space with connected domain curves $C$ will be denoted $\M^{\ct,\mathrm{un}}_{g_0}(\pi_r,\mu^1,\hdots,\mu^m)^\circ$. 
 
Because of the minimality of the curve class, we will only need multiplicities of the form 
\[ 
\mu^i=(1,\ldots, 1)\, .
\]
For other curve classes $\beta$, other multiplicities may also arise. 
\subsection{Definition of $\psi$-classes}
Let $\mathcal{FM}_{m}(\pi_r)$ be the moduli space of Fulton-MacPherson degenerations of abelian varieties with $m$ markings{\footnote{The markings are pairwise distinct and contained in the nonsingular locus of the Fulton-MacPherson degeneration. The marked degenerations are allowed to have an infinite group of automorphisms.}}. There exists a natural forgetful morphism   
\begin{equation}
\label{fmor}
\M^{\ct,\mathrm{un}}_{g_0}(\pi_r,\mu^1,\hdots,\mu^m)^\bullet \rightarrow \mathcal{FM}_{m}(\pi_r)\,. 
\end{equation}
For a subset $I \subset\{1,\hdots, m\}$, consider the divisor
\[ 
\mathcal{D}_I \subset  \mathcal{FM}_{m}(\pi_r)\, ,
\]
whose generic element parameterizes a Fulton-MacPherson degeneration of the form (\ref{1step}) where the marked points labeled by $I$ are contained on the bubble $\p^r$. The descendant class associated to the $i^{th}$ marking $x_i$ is defined by
\[ 
\Psi_i=\sum_{i\in I} \mathcal{D}_I\, . 
\]
Pulling back under the forgetful morphism \eqref{fmor}, we obtain the class $$\Psi_i\in \CH^1(\M^{\ct,\mathrm{un}}_{g_0}(\pi_r,\mu^1,\hdots,\mu^m)^\bullet)\, .$$
Up to certain normalization terms, these $\Psi$-classes can also be defined via vector bundles associated with the cotangent spaces at the markings, see \cite[Section 3.2]{DN} for more details. The same definition applies to $\M^{\ct,\mathrm{un}}_{g_0}(\pi_r,\mu^1,\hdots,\mu^m)^\circ$.  
\subsection{Star-shaped graphs}\label{section:star_shaped}
 \label{Star} A star-shaped graph $\mathsf{S}$ is a rooted graph whose non-root vertices are connected to the root by one edge.  We allow the star-shaped graphs to have  genus labels on  vertices and partition labels on edges, as depicted in Figure \ref{tree1}. In the figure, the non-root vertices are gray. The automorphism group  $\Aut(\mathsf{S})$ of a graph $\mathsf{S}$ is defined to be the automorphism group of the set of pairs $\{(g_1, \mu^1), \dots, (g_m, \mu^m) \}$. 

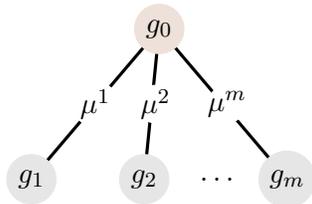
\begin{figure}[!ht]
\centering
\begin{tikzpicture}

    \node (center) at (0,0) {};
    \node (g1) at (-1.7,-2) {};
    \node (g2) at (-0.2,-2) {};
    \node (dots) at (0.8,-2) {\dots};
    \node (gk) at (1.7,-2) {};

    \draw[very thick] (center) -- (g1);
    \filldraw[fill=white, draw=white] (-0.85,-1) circle (.3cm);
    \node at (-0.85,-1) {$\mu^1$};

    \draw[very thick] (center) -- (g2);
    \filldraw[fill=white, draw=white] (-0.05,-1) circle (.25cm);
    \node at (-0.05,-1) {$\mu^2$};

    \draw[very thick] (center) -- (gk);
    \filldraw[fill=white, draw=white] (0.9,-1) circle (.25cm);
    \node at (0.9,-1) {$\mu^m$};

    \node[circle, fill=BrickRed!20, inner sep=2.5pt] at (0,0) {$g_0$};
    \node[circle, fill=gray!20, inner sep=2.5pt] at (-1.7,-2) {$g_1$};
    \node[circle, fill=gray!20, inner sep=2.5pt] at (-0.2,-2) {$g_2$};
    \node[circle, fill=gray!20, inner sep=2.5pt] at (1.7,-2) {$g_m$};

\end{tikzpicture}
\caption{Star-shaped graph with partition labels on edges.}
\label{tree1}
\end{figure}

\subsection{Blowup description for $r=2$} 
The wall-crossing formula in Theorem \ref{thm:wallcr} is a sum of contributions indexed by star-shaped graphs. For $r=2$, only two types of moduli spaces appear in the wall-crossing terms: 
\begin{equation} \label{c5ttg}
\M^{\ct,\mathrm{un}}_{2}(\pi_2,\underbrace{(1),\hdots, (1)}_{k})^\circ \ \ \ \text{and} \ \  \
\M^{\ct,\mathrm{un}}_{1}(\pi_2,(1,1),\underbrace{(1),\hdots, (1)}_{k})^\bullet\, .
\end{equation}
Indeed, for non-product abelian surfaces, the curves in the minimal class $\beta_{\min}$ are smooth genus 2 curves, while for product abelian surfaces $E_1 \times E_2$, the minimal curves are nodal genus 2 curves of compact type. 
The moduli  spaces \eqref{c5ttg}  can be described as iterated blowups of 
\[ 
\M^{\ct}_{2,k} \ \ \ \text{and} \ \ \ \mathcal{M}_{1,2+k}^\bullet=\bigcup_{k_1 +k_2=k} \M^{\mathrm{ct}}_{1,p_1+k_1}\times \M^{\mathrm{ct}}_{1,p_2+k_2}\, ,
\]
respectively, where $p_i$ are distinguished markings corresponding to the multiplicity $(1,1)$ point.  

Let us start with a description of the blowup centers for the first type of moduli spaces of unramified maps.  Given an integer $1 \leq \ell\leq k$,
we define
\[ 
Z_\ell \subset \M^\mathrm{ct}_{2,k}
\] to be
the locus with generic element parameterizing  a nodal genus $2$ curve with a rational bridge that connects two genus $1$ components and  contains $\ell$ marked points. For $\ell=1$, such a curve is depicted on the left  side of Figure \ref{fig:stable_map}. The locus $Z_\ell$  is of pure codimension 2 with possibly several non-disjoint nonsingular irreducible components (corresponding to all possible ways that the points can be distributed among the genus 1 components and the rational bridge).

We blow up $\M_{2,k}^{\ct}$ along the $Z_\ell$ in the following order:
\[
\widetilde{\M}_{2,k}=\mathrm{Bl}_{\tilde{Z}_1}\mathrm{Bl}_{\tilde{Z}_2} \hdots \mathrm{Bl}_{Z_k}\M^{\ct}_{2,k}\, .
\]
More precisely, we start by blowing up $Z_k$ and then proceed inductively: 
at the $i^{th}$ stage, we blow up the proper transform $\tilde{Z}_{k-i}$ of $Z_{k-i}$
for $i=0,\dots,k-1$. After the blowups at stage $i-1$, the proper transform $\tilde{Z}_{k-i}$ is nonsingular, since all of the smooth irreducible components
become disjoint, as explained in \cite{aitordenisjeremy}. In particular, the space
$\widetilde{\M}_{2,k}$ is nonsingular.

Similarly, given an integer $1 \leq \ell \leq k $, let 
\[ 
V_\ell \subset \mathcal{M}_{1,2+k}^\bullet
\]
be the locus with generic element parameterizing a pair of genus 1 curves with the distinguished markings $p_1$ and $p_2$ and $\ell$ other
markings on rational tails{\footnote{In other words, there are $\ell+2$ marked points including $p_1$ and $p_2$ on rational tails of the 
two elliptic curves with at least $2$ on each.}}.
As above, we define the iterated blowup along loci $V_\ell$, 
\[
\widetilde{\M}^\bullet_{1,2+k}=\mathrm{Bl}_{\tilde{V}_1}\mathrm{Bl}_{\tilde{V}_2} \hdots \mathrm{Bl}_{V_k}\M^\bullet_{1,2+k}\,.
\]
The moduli space $\widetilde{\M}^\bullet_{1,2+k}$ is also nonsingular (for the same reason).

\begin{thm}[\hspace{-0.5pt}\cite{aitordenisjeremy}] \label{thm:blowup}  There exist natural isomorphisms 
\[
\M^{\ct,\mathrm{un}}_{2}(\pi_2,\underbrace{(1),\hdots, (1)}_{k})^\circ \cong \widetilde{\M}_{2,k} \quad \text{and} \quad \M^{\ct,\mathrm{un}}_{1}(\pi_2,(1,1),\underbrace{(1),\hdots, (1)}_{k})^\bullet\cong \widetilde{\M}^\bullet_{1,2+k},
\]
such that the reduced virtual fundamental classes coincide with the standard fundamental classes. Moreover, each stage of the iterated blowup admits a moduli-theoretic interpretation in terms of maps to marked Fulton-MacPherson degenerations of abelian surfaces. 
\end{thm}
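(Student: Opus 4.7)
The plan is to exhibit explicit morphisms in both directions and verify they are inverse isomorphisms, processed stage by stage along the iterated blowup. The guiding principle is that the Abel-Jacobi embedding
\[
C \hookrightarrow \Jac(C)
\]
of a smooth genus $2$ curve is already an unramified map to its Jacobian, and this extends naturally to any point of $\M_{2,k}^{\ct}$ whose Jacobian is indecomposable. Over the locus parameterizing compact-type curves with a rational bridge $R$ between two elliptic components (equivalently, those with Jacobian $E_1 \times E_2$), the bridge is contracted under Abel-Jacobi to the origin of $E_1 \times E_2$, and any markings lying on $R$ collide there. This is precisely the failure of unramifiedness that the iterated blowup is designed to resolve by bubbling the target.

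Concretely, one first blows up $Z_k$, the locus where all $k$ markings lie on the bridge, since this is the most degenerate situation. The exceptional divisor is identified with the moduli of Fulton-MacPherson degenerations having a single $\p^2$ bubble attached at the image of the node, onto which the rational bridge and its $k$ markings map as a line carrying $k$ distinct marked points; the resulting configuration space supplies exactly the normal directions expected for the blowup. Proceeding by decreasing $\ell$, at the $i$th stage the new exceptional divisor over $\tilde Z_{k-i}$ encodes an additional or differently placed bubble, accommodating the case where only $k-i$ markings lie on the bridge. The blowup order is chosen so that the preceding stages have separated the irreducible components of $\tilde{Z}_{k-i}$, making it smooth and yielding a smooth total space $\widetilde{\M}_{2,k}$. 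To invert the resulting morphism $\widetilde{\M}_{2,k} \to \M^{\ct,\mathrm{un}}_{2}(\pi_2,(1),\ldots,(1))^\circ$, one contracts the Fulton-MacPherson degeneration $W \to X$ underlying an unramified map, recovers the stabilized marked curve in $\M_{2,k}^{\ct}$, and reads off the bubble data as a point on the exceptional divisors of the blowup.

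For the virtual class comparison, $\widetilde{\M}_{2,k}$ is nonsingular of the correct dimension, which coincides with the reduced virtual dimension of $\M^{\ct,\mathrm{un}}_{2}(\pi_2,(1),\ldots,(1))^\circ$; consequently, the reduced perfect obstruction theory has trivial virtual bundle and the reduced virtual fundamental class equals the fundamental class. The disconnected case is parallel, with $V_\ell$ in place of $Z_\ell$ and the distinguished markings $p_1, p_2$ encoding the node created by the $(1,1)$ multiplicity profile at the collision point of the two elliptic factors.

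The principal obstacle lies in the stage-by-stage moduli-theoretic verification: one must check that each proper transform $\tilde{Z}_{k-i}$ is nonsingular after the preceding blowups (so the next blowup is well-behaved), and that the iterated blowup exhausts the full moduli space of unramified maps, with no further bubbling configurations unaccounted for. This amounts to a careful combinatorial analysis of how nested $\p^2$ bubbles over the origin of a product abelian surface resolve colliding markings, together with a compatibility check that the automorphism groups of the unramified map stack match those inherited by the iterated blowup.
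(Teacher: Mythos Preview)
The paper does not prove this theorem; it is quoted from \cite{aitordenisjeremy}, and the paper only supplies an informal geometric explanation in the subsection that follows (together with Figure~\ref{fig:stable_map}). So there is no full argument here to compare against---only the heuristic picture.

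Your outline follows the right overall strategy (stage-by-stage identification of the iterated blowup with successive bubbling of the target), but it contains a concrete geometric error. You write that the rational bridge maps ``as a line carrying $k$ distinct marked points'' in the $\p^2$ bubble. This is wrong: the bridge connects \emph{two} elliptic components at two distinct nodes, and those nodes are sent to two distinct points on the line at infinity $D \subset \p^2$ (the tangent directions of $E_1$ and $E_2$ at the origin of $E_1\times E_2$). A line in $\p^2$ meets $D$ in a single point, so the bridge cannot embed as a line; it embeds as a \emph{conic}, exactly as the paper's discussion states. This matters for the dimension count you invoke: the moduli of smooth conics through two fixed points of $D$, modulo $\mathbb{G}_a^2 \rtimes \mathbb{G}_m$, is a point, and the residual $\p^1$ of freedom (the exceptional fibre of the blowup along $Z_1$) comes from the choice of where the marked point on the bridge lands on the conic. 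With a line, neither the intersection pattern with $D$ nor the $\p^1$ fibre is accounted for correctly.

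Beyond this, your proposal is candid that the real work---smoothness of each $\tilde Z_{k-i}$ after the preceding blowups, exhaustion of all bubbling configurations, and matching of automorphism groups and obstruction theories---is deferred. That is where the content of \cite{aitordenisjeremy} lies, and neither the present paper nor your sketch supplies it.
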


\subsection{Geometric interpretation of the blowup}

To explain how these blowups arise through the moduli problem of unramified maps, let us consider $\M^{\ct,\mathrm{un}}_{2}(\pi_2,(1))^\circ$. By associating 
to the marked point $x_1 \in W$ the fiber  $f^{-1}(x_1) \in C$, we obtain an identification of spaces
\[ 
\M^{\ct,\mathrm{un}}_{2}(\pi_2,(1))^\circ \cong \M^{\ct,\mathrm{un}}_{2,1}(\pi_2)^\circ,
\]
where the latter is the moduli space of unramified maps with marked points on the domain curve. If the domain $C$ is a nonsingular genus $2$ curve, then $C$ embeds into a non-product abelian surface $X=\textsf{Jac} (C)$ via the Abel-Jacobi map. If  $C$ is a nodal genus 2 curve with the marked point on a rational bridge, then the Abel-Jacobi map contracts the rational bridge to a point in $E_1\times E_2$. The resulting map is not unramified because the differential vanishes at all points on the rational bridge. Hence, the target $E_1\times E_2$ must  sprout a $\p^2$-bubble, over which the rational bridge embeds as a quadric. The geometry is illustrated in Figure \ref{fig:stable_map}. 

\begin{figure}[!ht]
   \centering{
    \fontsize{7pt}{9pt}\selectfont
   \def\svgwidth{90mm}
\begingroup%
  \makeatletter%
  \providecommand\color[2][]{%
    \errmessage{(Inkscape) Color is used for the text in Inkscape, but the package 'color.sty' is not loaded}%
    \renewcommand\color[2][]{}%
  }%
  \providecommand\transparent[1]{%
    \errmessage{(Inkscape) Transparency is used (non-zero) for the text in Inkscape, but the package 'transparent.sty' is not loaded}%
    \renewcommand\transparent[1]{}%
  }%
  \providecommand\rotatebox[2]{#2}%
  \newcommand*\fsize{\dimexpr\f@size pt\relax}%
  \newcommand*\lineheight[1]{\fontsize{\fsize}{#1\fsize}\selectfont}%
  \ifx\svgwidth\undefined%
    \setlength{\unitlength}{435.19038643bp}%
    \ifx\svgscale\undefined%
      \relax%
    \else%
      \setlength{\unitlength}{\unitlength * \real{\svgscale}}%
    \fi%
  \else%
    \setlength{\unitlength}{\svgwidth}%
  \fi%
  \global\let\svgwidth\undefined%
  \global\let\svgscale\undefined%
  \makeatother%
  \begin{picture}(1,0.63614617)%
    \lineheight{1}%
    \setlength\tabcolsep{0pt}%
    \put(0,0){\includegraphics[width=\unitlength,page=1]{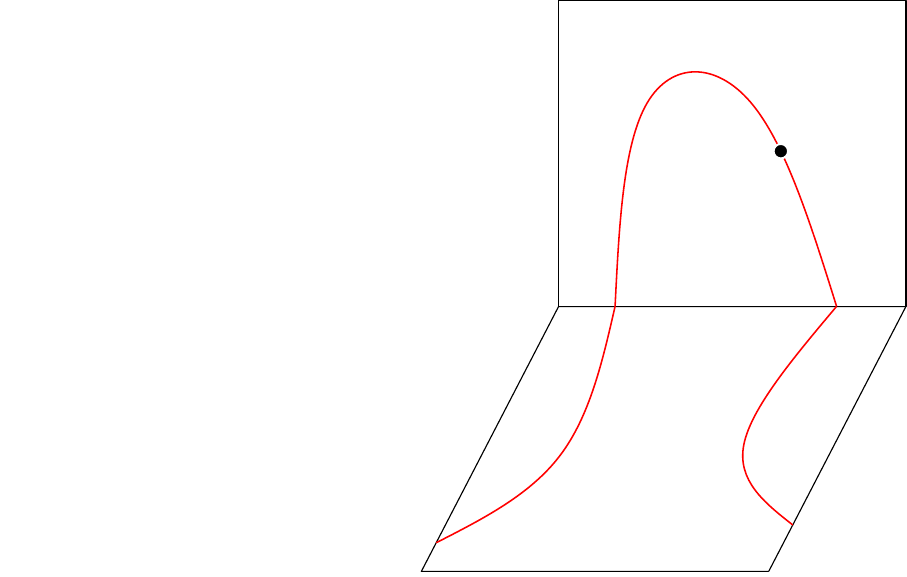}}%
    \put(0.68340039,0.02885531){\makebox(0,0)[t]{\lineheight{1.25}\smash{\begin{tabular}[t]{c}$Bl_0X$\end{tabular}}}}%
    \put(0.83744216,0.5998843){\makebox(0,0)[t]{\lineheight{1.25}\smash{\begin{tabular}[t]{c}$\mathbb{P}^2$\end{tabular}}}}%
    \put(0.72993544,0.42772795){\makebox(0,0)[t]{\lineheight{1.25}\smash{\begin{tabular}[t]{c}$g=0$\end{tabular}}}}%
    \put(0.60528787,0.19505227){\makebox(0,0)[t]{\lineheight{1.25}\smash{\begin{tabular}[t]{c}$g=1$\end{tabular}}}}%
    \put(0.8030621,0.19671426){\makebox(0,0)[t]{\lineheight{1.25}\smash{\begin{tabular}[t]{c}$g=1$\end{tabular}}}}%
    \put(0,0){\includegraphics[width=\unitlength,page=2]{A2A4/stable_map.pdf}}%
  \end{picture}%
\endgroup%

\caption{An example of an unramified map from a nodal genus $2$ curve to a Fulton-MacPherson degeneration of an abelian surface.}
\label{fig:stable_map}
}
\end{figure}

The intersection points of the quadric with the hyperplane at infinity are determined, and the $\p^2$-bubble is considered up to translations and dilations $\mathbb{G}^2_a\rtimes \mathbb{G}_m$. Overall, the moduli of such quadrics up to these automorphisms of $\p^2$ is a point. However, there is an extra degree of freedom associated with choosing a point on the quadric to which the marked point on the rational bridge is mapped -- there is a $\p^1$-moduli of such choices. As a result, we must 
blow up  $\M^{\ct}_{2,1}$ along the locus $Z_1$. With more marked points, there can be more Fulton-MacPherson expansions, which are realized via the iterated blowups of $Z_\ell$.

\subsection{Blowup description of classes} Using Theorem \ref{thm:blowup}, we can describe all the classes appearing in the wall-crossing formula of 
Theorem \ref{thm:wallcr} in terms of tautological and exceptional classes. 
More precisely, the classes that require a separate description are the cotangent line classes $\Psi_i$ and the hyperplane classes $H$. The Chern roots $\alpha_j$ can be identified with the pullbacks of the Chern roots of the Hodge bundles from $\M^{\ct}_{2}$ and $\M_{1,2}^\bullet$.

Let us start with  $\widetilde{\M}_{2,k}$. For a non-empty subset $I\subset \{1,\hdots,k\}$, let $E_I \subset \widetilde{\M}_{2,k}$ denote the exceptional divisor{\footnote{The divisors $E_I$ may be reducible.}}
corresponding to the locus
\[ 
Z_I \subset Z_{|I|},
\]
with generic element parameterizing a nodal genus 2 curve with a rational bridge that connects two genus 1 components and  contains marked points labeled by the set $I$. 
Let 
\[ 
E_i =\sum_{i\in I} E_{I}.
\]  
\begin{prop}[\hspace{-0.5pt}\cite{aitordenisjeremy}] With respect to the isomorphism 
\[\M^{\ct,\mathrm{un}}_{2}(\pi_2,\smash{\underbrace{(1),\hdots, (1)}_{k}})^\circ \cong \widetilde{\M}_{2,k}\]
of Theorem \ref{thm:blowup}, for $1 \leq i\leq k$, we have:
\vspace{-10pt}
\begin{align*}
\Psi_i&=\psi_i-\epsilon_i^*\psi_1+E_i\, , \\
\ev_i^*H&=\epsilon_i^*\psi_1- E_i\, ,\\
\ev_i^*\alpha_j&=\alpha_j\, ,
\end{align*}
where $\epsilon_i \colon \M^{\ct}_{2,k} \longrightarrow \M^{\ct}_{2,1}$ is the morphism which forgets all but the $i^{th}$ marking. The $\psi$ classes are pullbacks of their counterparts on $\M_{2,k}^{\ct}$.
\end{prop}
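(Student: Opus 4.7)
The plan is to verify the three formulas in order: (i) the identity for $\alpha_j$, which reduces to functoriality of the Hodge bundle under the Torelli map, (ii) the identity for $\ev_i^*H$, which is a direct geometric computation on the blowup $\widetilde{\M}_{2,k}$, and (iii) the identity for $\Psi_i$, which will follow from (ii) together with a logarithmic cotangent isomorphism.

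For $\ev_i^*\alpha_j=\alpha_j$, observe that $\ev_i$ takes values in $\p(\mathbb{E}_2^\vee)$, a projective bundle over $\A_2$, and the $\alpha_j$ on the target are pulled back from the Chern roots of $\mathbb{E}_2$ on $\A_2$. Under the isomorphism $\M^{\ct,\mathrm{un}}_2(\pi_2,(1),\hdots,(1))^\circ\cong \widetilde{\M}_{2,k}$ of Theorem \ref{thm:blowup}, the composition $\widetilde{\M}_{2,k}\to \p(\mathbb{E}_2^\vee)\to \A_2$ factors through the blowup map $\widetilde{\M}_{2,k}\to \M_{2,k}^{\ct}$, the forgetful map $\M_{2,k}^{\ct}\to \M_2^{\ct}$, and the Torelli map $\M_2^{\ct}\to \A_2$. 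Hence $\ev_i^*\alpha_j$ is the Chern root of the pullback of $\mathbb{E}_2$ to $\widetilde{\M}_{2,k}$, which is by definition the right-hand side.

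For $\ev_i^*H=\epsilon_i^*\psi_1-E_i$, I would first work on the open locus $\M_{2,k}\subset\widetilde{\M}_{2,k}$ of smooth genus $2$ curves with $k$ distinct marked points. There the unramified map is the Abel-Jacobi embedding $C\hookrightarrow \mathsf{Jac}(C)$, and $\ev_i$ records the projective tangent direction $[d\,\mathrm{AJ}(T_{p_i}C)]\in \p(\mathbb{E}_2^\vee)$. The tautological line subbundle pulls back to $T_{p_i}C$, so $\ev_i^*H=\psi_i$, and since no rational tails carrying $p_i$ appear on this open stratum, also $\epsilon_i^*\psi_1=\psi_i$. The two sides thus agree generically. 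To extend to the global identity, I would restrict to each exceptional divisor $E_I$ with $i\in I$, using the moduli-theoretic interpretation of the corresponding blowup stage from Theorem \ref{thm:blowup}: on $E_I$, the target bubbles a $\p^2$ on which the rational bridge of the domain curve is embedded as a quadric, and $\ev_i$ specializes to the projective tangent direction in the $\p^2$-bubble. A local computation in the $\p^2$-bubble (replacing the subbundle $T_{p_i}C$ by the corresponding direction on the bubble) shows that the coefficient of each $E_I$ containing $i$ is exactly $-1$, yielding the claimed formula after summing over $I\ni i$.

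For $\Psi_i=\psi_i-\epsilon_i^*\psi_1+E_i$, I would use the logarithmic cotangent isomorphism: by the unramified condition in Definition \ref{ununun}, the log differential $df^{\log}:T_C^{\log}\to f^*T_W^{\log}$ is nowhere vanishing, and at the marking $p_i$ it induces an isomorphism of line bundles on $\widetilde{\M}_{2,k}$. Dualizing at $p_i$, the log cotangent contribution from the target decomposes as the geometric cotangent $T_{x_i}^*W$ (which pulls back to $\ev_i^*H$ after dividing by the translation action, up to the tautological $\mathcal{O}(1)$ identification) plus the normal directions to the Fulton-MacPherson boundary divisors $\mathcal{D}_I$ through $x_i$, which by definition sum to $\Psi_i$. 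This gives the divisor equation
\[
\psi_i=\Psi_i+\ev_i^*H,
\]
and substituting the formula for $\ev_i^*H$ from step (ii) yields the stated identity for $\Psi_i$.

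The main obstacle is the careful bookkeeping needed in step (ii): the blowup $\widetilde{\M}_{2,k}$ is obtained by an iterated sequence of blowups along proper transforms of the loci $Z_\ell$, so the tautological line $\ev_i^*\mathcal{O}(-1)$ must be tracked stage by stage. The moduli-theoretic interpretation of each stage provided by Theorem \ref{thm:blowup} is the key input that reduces this bookkeeping to local computations in a $\p^2$-bubble, and the divisor equation $\psi_i=\Psi_i+\ev_i^*H$ is the conceptual reason for the matching of the two identities.
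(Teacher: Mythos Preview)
The paper does not supply its own proof of this proposition; it is quoted from \cite{aitordenisjeremy}, so there is no argument in the present paper to compare against.

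Your approach is reasonable. The identity for $\alpha_j$ is correct and immediate from functoriality, and the relation $\psi_i=\Psi_i+\ev_i^*H$ you derive from the log cotangent isomorphism is the right mechanism linking (ii) and (iii): once (ii) is established, (iii) follows.

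There is, however, a genuine gap in step (ii). You verify $\ev_i^*H=\epsilon_i^*\psi_1$ only on the locus $\M_{2,k}$ of smooth curves and then compute the coefficient of $E_I$ for $i\in I$. But the complement of $\M_{2,k}$ in $\widetilde{\M}_{2,k}$ contains more divisors than just these: it also contains the proper transforms of the boundary divisors of $\M_{2,k}^{\ct}$ (rational-tail divisors, the genus $1$ plus genus $1$ divisor without bridge, etc.) and the exceptional divisors $E_I$ with $i\notin I$. Equality of two divisor classes on an open set determines their difference only up to an arbitrary combination of \emph{all} boundary components, so you still owe the verification that the coefficients of all of these remaining divisors vanish. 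These checks are of the same flavor as the ones you sketched---for instance, on a rational-tail divisor carrying $p_i$, the tangent direction recorded by $\ev_i$ is that of the attaching node on the positive-genus part, which is exactly what $\epsilon_i^*\psi_1$ measures---but they are not automatic and must be carried out. A cleaner route is to argue that $\ev_i$ factors through a lift $\widetilde{\M}_{2,k}\to\widetilde{\M}_{2,1}$ of $\epsilon_i$, reducing everything to the case $k=1$, where the boundary is small and the verification is short.
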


We now proceed with $\widetilde{\M}^\bullet_{1,2+k}$. Let $E$ be the total exceptional divisor of $\widetilde{\M}^\bullet_{1,2+k} \rightarrow \M^\bullet_{1,2+k}$, so $E$ is the union of all exceptional divisors of the iterated blowups. 

\begin{prop}[\hspace{-0.5pt}\cite{aitordenisjeremy}] With respect to the isomorphism
\[\M^{\ct,\mathrm{un}}_{1}(\pi_2,(1,1), \smash{\underbrace{(1),\hdots, (1)}_{k}})^\bullet\cong \widetilde{\M}^\bullet_{1,2+k}\]
of Theorem \ref{thm:blowup},  we have:
\vspace{-10pt}
\begin{align*}
\Psi_1&=\psi_{p_1}+\psi_{p_2}-E,  \\
 \Psi_{1+i}&=\psi_i\, ,  \ \ \ 1\leq i \leq k ,   \\
\ev_i^*H&=0\, , \ \ \  1\leq i \leq k+1,   \\
\ev_i^*\alpha_j=\alpha_j&=0\,, \ \ \    1\leq i \leq k+1, 
\end{align*}
where  the $\psi$ classes are pullbacks of their counterparts on $\M^\bullet_{1,2+k}$.
\end{prop}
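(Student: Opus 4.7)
The plan is to follow the same strategy used for the connected case in the preceding proposition, reading off each class directly from the moduli-theoretic interpretation of the iterated blowup $\widetilde{\M}^\bullet_{1,2+k}$ provided by Theorem \ref{thm:blowup}. At each stage $\mathrm{Bl}_{\tilde V_\ell}$, a Fulton-MacPherson expansion of the target is introduced, and I would track how the divisorial descriptions $\Psi_i = \sum_{i \in I} \mathcal{D}_I$ of the cotangent classes, together with the evaluation maps $\ev_i$, behave under these blowups.

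I would handle the simple markings first. For $\Psi_{1+i}$ with $1 \leq i \leq k$, the point $x_{1+i}$ is a simple (multiplicity $(1)$) marking on the target, and the unramified condition pins the cotangent at $x_{1+i}$ to the cotangent at its unique preimage on $C$ via the logarithmic derivative. Since the blowup centers $V_\ell$ are disjoint from the locus governing this identification, the class descends cleanly to $\psi_i$ on $\widetilde{\M}^\bullet_{1,2+k}$. The vanishings $\ev_i^* H = 0$ and $\ev_i^* \alpha_j = 0$ follow from the factorization of the evaluation through the product locus $\mathcal{A}_1 \times \mathcal{A}_1 \hookrightarrow \mathcal{A}_2$: each genus $1$ domain component must map unramifiedly to an elliptic factor of the product degeneration, so the logarithmic derivative at the preimage of the special point is canonically determined by the splitting of $\mathbb E_2$, forcing the evaluation to land in the fiber of $\mathbb P(\mathbb E_2^\vee)$ along which both $H$ and the pulled-back Chern roots vanish.

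The delicate identification is $\Psi_1$. Two domain points $p_1, p_2$, one on each genus $1$ component, both map to the multiplicity $(1,1)$ marking $x_1$. On the open locus of $\M^\bullet_{1,2+k}$ where no further Fulton-MacPherson expansion at $x_1$ is required, a local computation identifies $\Psi_1$ with $\psi_{p_1} + \psi_{p_2}$: the defining divisor $\mathcal{D}_{\{1\}}$ smooths the $\mathbb P^2$-bubble rooted at $x_1$, and this smoothing is controlled by the simultaneous deformation of both domain nodes. Each iterated blowup along $\tilde V_\ell$ introduces an additional Fulton-MacPherson expansion at $x_1$, and by the standard behavior of $\psi$-classes under a blowup along a center passing through the marking, each stage contributes $-[E_\ell]$, yielding the overall correction $-E$.

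The hardest part will be the last step: verifying that the exceptional correction in $\Psi_1$ has sign $-1$ and multiplicity one uniformly across every blowup stage, and that no further corrections arise from the interaction between the two genus $1$ components at $x_1$. This requires a recursive analysis of each $V_\ell$ combined with an examination of the local coordinates of the Fulton-MacPherson space near the relevant boundary stratum; the parallel computation in the connected case serves as a template, but the disconnected combinatorics and the multiplicity $(1,1)$ profile must be reconciled at each blowup stage.
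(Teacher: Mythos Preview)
The paper does not contain a proof of this proposition: it is quoted from the reference \cite{aitordenisjeremy} (as is the preceding proposition for the connected case), and no argument is given in the present paper beyond the informal geometric discussion in the subsection ``Geometric interpretation of the blowup.'' There is therefore no proof in this paper to compare your proposal against.

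That said, your sketch is a plausible outline of the sort of argument one expects \cite{aitordenisjeremy} to carry out, mirroring the connected case. A few cautions: your explanation of the vanishings $\ev_i^* H = 0$ and $\alpha_j = 0$ is vague and not obviously correct as written---the claim that the evaluation ``lands in a fiber along which both $H$ and the pulled-back Chern roots vanish'' needs to be made precise, since $H$ is the hyperplane class on $\p(\mathbb E_2^\vee)$ and does not obviously vanish on any fiber. The actual mechanism presumably involves the splitting $\mathbb E_2 \cong \mathbb E_1 \boxplus \mathbb E_1$ over the product locus and a careful identification of where the logarithmic derivative lands, together with relations in the Chow ring of $\M_{1,n}^{\ct}$; you should not expect this to fall out of a single sentence. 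Likewise, the identification $\Psi_1 = \psi_{p_1} + \psi_{p_2} - E$ requires tracking the divisorial definition $\Psi_1 = \sum_{1 \in I} \mathcal D_I$ through the blowup description, and your heuristic about ``simultaneous deformation of both domain nodes'' is suggestive but not a proof. If you want a genuine argument, you will need to consult \cite{aitordenisjeremy} directly.
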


The main computational challenge of the wall-crossing method for $r=2$ is to compute the pushforwards of powers of exceptional classes. Below, we will provide expressions for all relevant powers of exceptional classes for  $g=4$ and $g=5$.

\subsection{Calculations for $g=4$} \label{wc4}
 
The graphs involved in the wall-crossing formula of Theorem \ref{thm:wallcr} for $g=4$ are depicted in Figure \ref{trees4}. 
Exceptional classes appear only for the first two graphs, which correspond to blowups of $\M^{\ct}_{2,1}$ and $\M^{\ct}_{2,2}$ respectively. The last two graphs have too few markings for exceptional loci to occur: there must be at least $4$ markings on the root vertex for a locus $V_\ell \subset \M^\bullet_{1,2+k}$ to be non-empty. The pushforwards of powers of exceptional divisors  can be computed directly through the description of the centers of blowups and their normal bundles.

 \begin{figure}[!ht] 
 	\centering 
 	\begin{tikzpicture}		
 		
 		\node (1) at (-3,0) {};
 		\node (2) at (-3,-2) {};

 		\draw[very thick] (-3,0)--(-3,-2);

 		\filldraw[ fill=white,draw=white] (-3,-1) circle (.25cm);
 		\node at (-3,-1) {$(1)$};
        
        \node[circle, fill=BrickRed!20, inner sep=2.5pt] at (-3,0) {$2$};
        \node[circle, fill=gray!20, inner sep=2.5pt] at (-3,-2) {$2$};

 		\node (1) at (0,0) {};
 		\node (2) at (-1,-2) {}; 
 		\node (3) at (1,-2) {}; 
 		
 		\draw[very thick] (0,0)--(-1,-2);
 		\draw[very thick] (0,0)--(1,-2);
 		
 		\filldraw[ fill=white,draw=white] (-0.5,-1) circle (.25cm);
 		\node at (-0.5,-1) {$(1)$};
 
 		\filldraw[ fill=white,draw=white] (0.5,-1) circle (.25cm);
 		\node at (0.5,-1) {$(1)$};

        \node[circle, fill=BrickRed!20, inner sep=2.5pt] at (0,0) {$2$};
        \node[circle, fill=gray!20, inner sep=2.5pt] at (-1,-2) {$1$};
        \node[circle, fill=gray!20, inner sep=2.5pt] at (1,-2) {$1$};

        \node (1) at (3,0) {};
 		\node (2) at (3,-2) {}; 
 		
 		\draw[very thick] (3,0)--(3,-2);
 		
 		\filldraw[ fill=white,draw=white] (3,-1) circle (.25cm);
 		\node at (3,-1) {$(1^2)$};

        \node[circle, fill=BrickRed!20, inner sep=2.5pt] at (3,0) {$1$};
        \node[circle, fill=gray!20, inner sep=2.5pt] at (3,-2) {$2$};
        
        \node (1) at (6,0) {};
 		\node (2) at (5,-2) {}; 
 		\node (3) at (7,-2) {}; 
 		
 		\draw[very thick] (6,0)--(5,-2);
 		\draw[very thick] (6,0)--(7,-2);
 		
 		\filldraw[ fill=white,draw=white] (5.5,-1) circle (.3cm);
 		\node at (5.5,-1) {$(1^2)$};
 		
 		\filldraw[ fill=white,draw=white] (6.5,-1) circle (.25cm);
 		\node at (6.5,-1) {$(1)$};   

        \node[circle, fill=BrickRed!20, inner sep=2.5pt] at (6,0) {$1$};
        \node[circle, fill=gray!20, inner sep=2.5pt] at (7,-2) {$1$};
        \node[circle, fill=gray!20, inner sep=2.5pt] at (5,-2) {$1$};

 	\end{tikzpicture}\caption{Graphs for $r=2$ and $g=4$, where $(1^2)$ denotes the partition $(1,1)$.} \label{trees4} 
\end{figure}	 

The first graph corresponds to the blowup $\tau \colon \widetilde{\M}_{2,1} \rightarrow \M^{\ct}_{2,1}$ along $Z_1$. The normal bundle of $Z_1\subset \M^{\ct}_{2,1}$ is trivial, therefore we have
\[ 
\tau_*(E^2_1)=-Z_1\, , \quad \tau_*(E_1^\ell)=0 \ \ \ \text{for} \ \ \ \ell\neq2\, .
\]
Here, $E_1$ is the unique exceptional divisor.

The second graph corresponds to the iterated blowup 
$\tau \colon \widetilde{\M}_{2,2} \rightarrow \M^{\ct}_{2,2}$ along $Z_2$ and $Z_1$.
Inspecting equation \eqref{ifun}, we see that the $I$-functions $I_{1,(1)}(z)$ appearing in Theorem \ref{thm:wallcr} have $z$-degree equal to $1$ in this case. Hence at most the first powers of $E_1$ and $E_2$ contribute, and
\[
\tau_*(E_i)=0\, , \ \ \ \tau_*(E_1 \cdot E_2)=\tau_*(E_{\{1,2\}}^2)=-Z_{\{1,2\}}\, ,
\]
where $Z_{\{1,2\}}\subset \M^{\ct}_{2,2}$ is the locus $Z_I$ for $I=\{1,2\}$. 

The above cases cover all of the exceptional contributions to the wall-crossing formula for $g=4$. The remainder of the calculation lies in the  tautological rings
$$\R^*(\M^{\ct}_{2,k})\ \ \ \text{and}\ \ \  \R^*(\M^{\bullet}_{1,2+k})\, $$
and can be computed using
 \textsf{admcycles}.
 We find that 
the pushforward of
$[\M^{\ct}_{4}(\pi_2)]^{\textnormal{red}}$
along
$$\M^{\ct}_{4}(\pi_2)\rightarrow \M_4^{\ct}$$
equals $\Tor^*(\taut([\A_2\times \A_2]))=42\lambda_1\lambda_3\in \R^*(\M_4^{\ct})$.

\subsection{Calculations for $g=5$}

The graphs involved in the wall-crossing formula of Theorem \ref{thm:wallcr} for $g=5$ are depicted in Figure \ref{trees5}. 
 Although the  complexity of the graphs does not  increase much, the number of exceptional classes involved in the calculation jumps. 
 The exceptional classes appear only for the first three graphs  (the graphs without $(1,1)$-edges). For the sixth  graph corresponding to $\widetilde{\M}^\bullet_{1,2+2}$, the $z$-degree of $I$-function $I_{1,(1,1)}(z)$ is $0$, so no substitution by $\Psi_1$ is required.  

\begin{figure}[!ht]
	\centering \vspace{0.1cm}
	\begin{tikzpicture}		
		
	\node (1) at (-3,0) {};
 		\node (2) at (-3,-2) {};

 		\draw[very thick] (-3,0)--(-3,-2);

 		\filldraw[ fill=white,draw=white] (-3,-1) circle (.25cm);
 		\node at (-3,-1) {$(1)$};
        
        \node[circle, fill=BrickRed!20, inner sep=2.5pt] at (-3,0) {$2$};
        \node[circle, fill=gray!20, inner sep=2.5pt] at (-3,-2) {$3$};
 		
 		\node (1) at (0,0) {};
 		\node (2) at (-1,-2) {}; 
 		\node (3) at (1,-2) {}; 
 		
 		\draw[very thick] (0,0)--(-1,-2);
 		\draw[very thick] (0,0)--(1,-2);

 		\filldraw[ fill=white,draw=white] (-0.5,-1) circle (.25cm);
 		\node at (-0.5,-1) {$(1)$};
 
 		\filldraw[ fill=white,draw=white] (0.5,-1) circle (.25cm);
 		\node at (0.5,-1) {$(1)$};

        \node[circle, fill=BrickRed!20, inner sep=2.5pt] at (0,0) {$2$};
        \node[circle, fill=gray!20, inner sep=2.5pt] at (-1,-2) {$2$};
        \node[circle, fill=gray!20, inner sep=2.5pt] at (1,-2) {$1$};

        \node (1) at (3+1,0) {};
		\node (2) at (3+1,-2) {}; 
		\node (3) at (2+0.5,-2) {}; 
		\node (4) at (4+1.5,-2) {}; 
		
		\draw[very thick] (3+1,0)--(3+1,-2);
		\draw[very thick] (3+1,0)--(2+0.5,-2);
		\draw[very thick] (3+1,0)--(4+1.5,-2);
		
		\filldraw[thick, fill = white] (3+1,0) circle (.25cm) node at (1) {$0$};
		\filldraw[thick, fill=white] (3+1,-2) circle (.25cm) node at (2) {$1$};
		\filldraw[thick, fill=white] (2+0.5,-2) circle (.25cm) node at (3) {$1$};
		\filldraw[thick, fill=white] (4+1.5,-2) circle (.25cm) node at (4) {$1$};
		
		\filldraw[ fill=white,draw=white] (3+1,-1) circle (.25cm);
		\node at (3+1,-1) {$(1)$};
		
		\filldraw[ fill=white,draw=white] (2.5+0.75,-1) circle (.25cm);
		\node at (2.5+0.75,-1) {$(1)$};
		
		\filldraw[ fill=white,draw=white] (3.5+1.25,-1) circle (.25cm);
		\node at (3.5+1.25,-1) {$(1)$};

        \node[circle, fill=BrickRed!20, inner sep=2.5pt] at (3+1,0) {$2$};
        \node[circle, fill=gray!20, inner sep=2.5pt] at (3+1,-2) {$1$};
        \node[circle, fill=gray!20, inner sep=2.5pt] at (2+0.5,-2) {$1$};
        \node[circle, fill=gray!20, inner sep=2.5pt] at (4+1.5,-2) {$1$};
		
	\end{tikzpicture}
	
	\vspace{0.5cm} 
	\hspace{-0.3cm} 
	
	\begin{tikzpicture}

	\node (1) at (-3,0) {};
 		\node (2) at (-3,-2) {}; 
     
 		\draw[very thick] (-3,0)--(-3,-2);
 		
 		\filldraw[ fill=white,draw=white] (-3,-1) circle (.25cm);
 		\node at (-3,-1) {$(1^2)$};
        
        \node[circle, fill=BrickRed!20, inner sep=2.5pt] at (-3,0) {$1$};
        \node[circle, fill=gray!20, inner sep=2.5pt] at (-3,-2) {$3$};
 		
 		\node (1) at (0,0) {};
 		\node (2) at (-1,-2) {}; 
 		\node (3) at (1,-2) {}; 
 		
 		\draw[very thick] (0,0)--(-1,-2);
 		\draw[very thick] (0,0)--(1,-2);
 		
 		\filldraw[ fill=white,draw=white] (-0.5,-1) circle (.3cm);
 		\node at (-0.5,-1) {$(1^2)$};
 		
 		\filldraw[ fill=white,draw=white] (0.5,-1) circle (.25cm);
 		\node at (0.5,-1) {$(1)$};

        \node[circle, fill=BrickRed!20, inner sep=2.5pt] at (0,0) {$1$};
        \node[circle, fill=gray!20, inner sep=2.5pt] at (-1,-2) {$2$};
        \node[circle, fill=gray!20, inner sep=2.5pt] at (1,-2) {$1$};

     \node (1) at (3+1,0) {};
		\node (2) at (3+1,-2) {}; 
		\node (3) at (2+0.5,-2) {}; 
		\node (4) at (4+1.5,-2) {}; 
		
		\draw[very thick] (3+1,0)--(3+1,-2);
		\draw[very thick] (3+1,0)--(2+0.5,-2);
		\draw[very thick] (3+1,0)--(4+1.5,-2);
		
		\filldraw[thick, fill = white] (3+1,0) circle (.25cm) node at (1) {$0$};
		\filldraw[thick, fill=white] (3+1,-2) circle (.25cm) node at (2) {$1$};
		\filldraw[thick, fill=white] (2+0.5,-2) circle (.25cm) node at (3) {$1$};
		\filldraw[thick, fill=white] (4+1.5,-2) circle (.25cm) node at (4) {$1$};
		
		\filldraw[ fill=white,draw=white] (3+1,-1) circle (.25cm);
		\node at (3+1,-1) {$(1)$};
		
		\filldraw[ fill=white,draw=white] (2.5+0.75,-1) circle (.3cm);
		\node at (2.5+0.75,-1) {$(1^2)$};
		
		\filldraw[ fill=white,draw=white] (3.5+1.25,-1) circle (.25cm);
		\node at (3.5+1.25,-1) {$(1)$};

        \node[circle, fill=BrickRed!20, inner sep=2.5pt] at (3+1,0) {$1$};
        \node[circle, fill=gray!20, inner sep=2.5pt] at (3+1,-2) {$1$};
        \node[circle, fill=gray!20, inner sep=2.5pt] at (2+0.5,-2) {$1$};
        \node[circle, fill=gray!20, inner sep=2.5pt] at (4+1.5,-2) {$1$};
				
	\end{tikzpicture}\caption{Graphs for $r=2$ and $g=5$, where $(1^2)$ denotes the partition $(1,1)$.} \label{trees5} 
\end{figure}

The first graph corresponds to $\widetilde{\M}_{2,1}$, and the same analysis applies as in the case of $g=4$. For the second graph, we analyze the moduli space  $\widetilde{\M}_{2,2}$. Since the degrees in $z$ of the corresponding $I$-functions in Theorem \ref{thm:wallcr} are $3$ and $1$ respectively, we must only compute pushforwards of monomials whose degree in $E_1$ is at most 3 and whose degree in $E_2$ is at most 1. An analysis of the normal bundles of the proper transforms as well as the intersections of the irreducible components of $Z_1 \cup Z_2=Z_{\{1,2\}} \cup Z_{\{1\}}\cup Z_{\{2\}}$
shows:
\[
\begin{aligned}
\tau_*(E_i) &= 0\,,&\qquad \tau_*(E_1\cdot E_2) &= -Z_{\{1,2\}} \,,\\
\tau_*(E_1^2) &= -Z_{\{1\}}-Z_{\{1,2\}} \,,&\qquad \tau_*(E_1^2\!\cdot E_2) &= -\psi_{q_1|Z_{\{1\}}} + (\psi_{q_1}+\psi_{q_2})_{|Z_{\{1,2\}}}\,, \\
\tau_*(E_1^3) &= -\psi_{q_1|Z_{\{1\}}} + (\psi_{q_1}+\psi_{q_2})_{|Z_{\{1,2\}}} \,,&\qquad \tau_*(E_1^3\!\cdot E_2) &= 0\,,
\end{aligned}
\]
where $\psi_{q_1}$ is the $\psi$-class on $Z_{\{1\}}$ associated to the node between the rational bridge and the genus $1$ curve containing the second marking and $\psi_{q_1}$ and $\psi_{q_2}$ are the $\psi$-classes on $Z_{\{1,2\}}$ associated to the two nodes between the rational bridge and the genus $1$ curves. 

For $\widetilde{\M}_{2,3}$, at most the first powers of each exceptional class $E_i$ appear:
\begin{align*}
\tau_*(E_i)&=0\, ,\\
\tau_*(E_{i}\cdot E_{j})&= - Z_{\{i,j\}}-Z_{\{1,2,3\}}\, , \\
\tau_*(E_1 \cdot E_2 \cdot E_3)&=-\psi_{q_1 |Z_{\{1,2\}}}-\psi_{q_1 |Z_{\{1,3\}}}-\psi_{q_1 |Z_{\{2,3\}}}+ (\psi_{q_1}+\psi_{q_2})_{|Z_{\{1,2,3\}}},
\end{align*}
where $\psi$-classes $\psi_{q_i}$ are defined in the same way as in the case of two markings. 
Although the powers are not high, the number of exceptional components increases considerably for three markings: $Z_1\cup Z_2\cup Z_3$ has 10 irreducible components. In general, the blowup centers
\[
Z_1\cup \hdots \cup  Z_k
\]
have 
$(3^k-2^k-1)/2+1$
components, giving rise to the same number of exceptional components on the blowup. The explosion of exceptional components is the main source of computational complexity for the wall-crossing method. 

After an \textsf{admcycles}
calculation,  
we find that 
the pushforward of
$[\M^{\ct}_{5}(\pi_2)]^{\textnormal{red}}$
along
$$\M^{\ct}_{5}(\pi_2)\rightarrow \M_5^{\ct}$$
equals $\Tor^*(\taut([\A_2\times \A_3]))=22\lambda_2\lambda_4\in \R^*(\M_5^{\ct})$.

\subsection{Higher genus}
\label{wch}
The calculation of the
contributions obtained by pushing forward exceptional divisors on iterated blowups
of $\widetilde{\M}_{2,k}$
and $\widetilde{\M}^\bullet_{1,2+k}$ is computationally challenging. The natural setting for these calculations is
the logarithmic Chow ring: the algebra of piecewise polynomials on the moduli space of
tropical curves. The required logarithmic intersection calculus has been implemented in \textsf{logtaut} in 
\textsf{admcycles}. 
Using the wall-crossing formulas, we have verified the
homomorphism property for $([\J_g],[\A_2\times \A_{g-2}])$ for $2\leq g \leq 8$ using the code \cite{codeFeusi}.

The algorithm scales linearly with the number of star-shaped graphs and the number of
components of the blowup centers. As such, the total run time grows super-exponentially in $g$,
and the computation is feasible only for small values of $g$. 

\section{Tautological projections on \texorpdfstring{$\X_g^s \rightarrow \A_g$}{Xgs to Ag}}\label{tautcomp}

\subsection{Overview} We start by  defining the tautological ring $\R^*(X^s)$ associated to a self-product of a fixed principally polarized abelian variety $(X,\theta)$ together with a corresponding tautological projection operator 
$$\taut^s_X:\CH^*(X^s)\rightarrow \R^*(X^s)\, .$$
The construction naturally extends to the fiber product{\footnote{In Section \ref{gouf}, the universal family and the 
fiber product were denoted by $\pi_g$ and
$\pi_g^s$. We drop the subscript $g$ here for notational convenience.}} 
$$\pi^s:\X_g^s=\underbrace{\X_g\times_{\A_g} \cdots \times_{\A_g} \X_g}_{s}
\rightarrow \A_g$$
of the universal family of abelian varieties $\pi:\X_g\rightarrow \A_g$.
We then derive closed-form expressions for the tautological projections of natural product cycles on $\X_g^s$.  

\subsection{The tautological ring of products of abelian varieties} 
\label{vee4r}
Let $(X, \theta)$ be a principally polarized abelian variety of dimension $g$. 
There is a canonical{\footnote{Starting with an arbitrary lift $\widetilde \theta$ of the polarization to Chow, the symmetric lift is given by $$\theta=\frac{1}{2}(\widetilde \theta+(-1)^*\widetilde \theta)\in \CH^1(X)\,.$$ The symmetric lift $\theta$ is independent of the choice of $\widetilde \theta$. We denote both the principal polarization and the symmetric Chow class lift by $\theta$.}}  symmetric rational Chow class $\theta\in \mathsf {CH}^1(X)$ of the principal polarization $\theta\in \mathsf H^2(X,\mathbb{Z})$ to a divisor class of $X$.
Let $X^s$ be the $s$-fold product for $s\geq 1$. Let $$\theta_i\in \mathsf{CH}^1(X^s)\,, \quad 1\leq i\leq s\,,$$ be the pullbacks of the theta classes on each of the $s$ factors. 
Let $\mathcal{P}_{ij}$ be the Poincar\'e line bundle for each pair of factors $(i,j)$, trivialized along both zero sections:
$$
\mathcal P_{ij} = \pi_{ij}^*(\mathcal P)\, , \quad \pi_{ij} : X^s \longrightarrow X \times X \cong X\times X^\vee\, , \quad 1\leq i, j\leq s\,, \quad i\neq j\,.
$$ We define the classes $$\eta_{ij} = c_{1}(\mathcal P_{ij})\in \mathsf {CH}^1(X^s)\,, \quad 1\leq i, j\leq s\, , \quad i\neq j\, .$$  Our conventions imply $\eta_{ij}=\eta_{ji}$, so we will often take $i<j$. For convenience, let $\eta_{ii}=\theta_{i}$.
\begin{definition}
The \emph{tautological ring} of $X^s$ is the subring $\R^*(X^s) \subset \CH^*(X^s)$ generated by the classes $\theta_i$ for $1\leq i\leq s$ and the classes $\eta_{ij}$ for $1\leq i<j\leq s.$  \end{definition}

The ring $\R^*(X^s)$ is explicitly described in Proposition \ref{p32} below. The result is likely known to the experts, see \cite [Section 6]{GrushevskyHulekTommasiStable}, but we give a detailed presentation here.

We study  $\R^*(X^s)\subset \CH^*(X^s)$ by considering the images  of tautological classes under the cycle class map \begin{equation}\label{cyclemap}\CH^*(X^s) \rightarrow \mathsf H^{2*}(X^s, \mathbb Q)\, .\end{equation} Let $V\cong \mathbb Q^{2g}$ be the standard symplectic vector space.
We identify $\mathsf H^1(X, \mathbb Q)\cong V$, and, therefore obtain an identification
$$
\mathsf H^*(X^s)\cong \wedge^*(V\otimes \mathbb Q^s)\, .
$$
In symplectic coordinates  $\{x_1, y_1, \ldots , x_g , y_g\}$ on $X$, we can write 
\begin{align}\label{tddd}
\theta &= \sum_{i=1}^g dx_i \wedge dy_i \in \wedge^2 V \cong \mathsf H^2(X, \mathbb Q)\, ,\\
\eta &= c_1(\mathcal P) = \sum_{i=1}^g dx_i^{(1)} \wedge dy_i^{(2)} - dy_i^{(1)} \wedge dx_i^{(2)} \in \wedge^2(V \otimes \mathbb Q^2)\cong \mathsf H^2(X \times X, \mathbb Q)\, .\nonumber
\end{align} The superscripts in the formula for $\eta$ indicate pullbacks from the factors. 
These elements are invariant under the action of the symplectic group $\operatorname{Sp}(V)$ on $\wedge^2V$ and $\wedge^2 (V \otimes \mathbb Q^2)$ respectively. 

More generally, let $$\inv=\left(\wedge^*(V \otimes \mathbb Q^s)\right)^{\operatorname{Sp}(V)}$$ be the $\operatorname{Sp}(V)$-invariant subring of $\wedge^*(V \otimes \mathbb Q^s)$. On tautological classes, the cycle map \eqref{cyclemap} then factors through $$\R^*(X^s) \rightarrow \inv\,.$$ 

The structure of the ring of invariants $\inv$ can be made explicit. We fix a basis $e_1, \ldots, e_s$ of $\mathbb Q^s$. We consider the induced basis $e_I$ of the symmetric algebra $\operatorname{Sym}^{\bullet}(\mathbb Q^s)$ indexed by non-decreasing strings $I$ of integers chosen from the set $\{1, \ldots, s\}$. For each $k\geq 1$, there is a canonical embedding $$\iota_k :\operatorname{Sym}^{2k}(\mathbb Q^s)\hookrightarrow\operatorname{Sym}^{k}(\operatorname{Sym}^2\mathbb Q^s)$$ constructed as follows. For each non-decreasing string $I$ of length $2k$, we set 
$$
\iota_k(e_I) = \sum_{(J_1, \ldots , J_k)}e_{J_1} \ldots e_{J_k}\, ,
$$
where the sum is taken over all possible partitions $(J_1, \ldots, J_k)$ of $I$ with $k$ parts, each consisting of two elements of the string $I$.{\footnote{The sum has $\frac{1}{k!}\binom{2k}{2,\ldots,2}=(2k-1)!!$ terms.}} The following result is proven using classical invariant theory. 

\begin{thm}[\hspace{-0.5pt}\protect{\cite[Theorem 3.4]{Thompson07}}]\label{t30}
  The ring of invariants has the following presentation $$\inv\cong \frac{\operatorname{Sym}^\bullet(\operatorname{Sym}^2 \mathbb Q^s)}{\langle\operatorname{im}(\iota_{g+1})\rangle}\, .$$ 
\end{thm}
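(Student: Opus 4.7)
The plan is to invoke the classical template of invariant theory, adapted to the super (exterior) setting. The first step is to construct a natural ring map $\Phi\colon \operatorname{Sym}^\bullet(\operatorname{Sym}^2\mathbb{Q}^s) \to \inv$ sending each generator $e_ie_j \in \operatorname{Sym}^2\mathbb{Q}^s$ to a suitable scalar multiple of the quadratic invariant $\omega_{ij} \in \wedge^2(V \otimes \mathbb{Q}^s)^{\operatorname{Sp}(V)}$, obtained by contracting two copies of $V \otimes \mathbb{Q}^s$ using the symplectic form on $V$. Anticommutation of odd wedge factors forces $\omega_{ij} = \omega_{ji}$, so the assignment descends to $\operatorname{Sym}^2\mathbb{Q}^s$. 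Since each $\omega_{ij}$ has even exterior degree, the $\omega_{ij}$ pairwise commute, so $\Phi$ extends to a ring map on the full symmetric algebra (with the convention that $\operatorname{Sym}^2\mathbb{Q}^s$ lies in cohomological degree $2$). The identification $(\wedge^2(V \otimes \mathbb{Q}^s))^{\operatorname{Sp}(V)} \cong \operatorname{Sym}^2\mathbb{Q}^s$ from the preceding discussion shows $\Phi$ is already an isomorphism in cohomological degree $2$.

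Surjectivity of $\Phi$ is the super-exterior form of Weyl's First Fundamental Theorem for $\operatorname{Sp}(V)$: viewing $\wedge^*(V \otimes \mathbb{Q}^s)$ as the super-symmetric algebra on the purely odd vector space $W = V \otimes \mathbb{Q}^s$, the $\operatorname{Sp}(V)$-invariants form a commutative algebra generated by the quadratic contractions $\omega_{ij}$. Next, the inclusion $\langle \operatorname{im}(\iota_{g+1}) \rangle \subset \ker \Phi$ amounts to a Wick-type identity: under $\Phi$, the sum over matchings defining $\iota_{g+1}(e_I)$ becomes a Pfaffian-style contraction of $2(g+1)$ vectors drawn from the $2g$-dimensional representation $V$, which must vanish for dimensional reasons since the relevant Schur functor $S_\lambda V$ carries no $\operatorname{Sp}(V)$-invariants once a column of $\lambda$ exceeds $2g = \dim V$.

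The reverse inclusion $\ker \Phi \subset \langle \operatorname{im}(\iota_{g+1}) \rangle$, i.e., the Second Fundamental Theorem in this setting, is the heart of the result and the main obstacle. I would attack it via Howe duality for the reductive dual pair $(\operatorname{Sp}(V), \mathrm{O}(s))$ acting on $\wedge^*(V \otimes \mathbb{Q}^s)$: this duality produces a multiplicity-free decomposition into joint irreducibles, and the $\operatorname{Sp}(V)$-invariants form the sum of $\mathrm{O}(s)$-isotypic pieces corresponding to the trivial $\operatorname{Sp}(V)$-type. The resulting graded $\mathrm{O}(s)$-representation admits an explicit character via the Weyl formula. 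Comparing this character degree by degree with the Hilbert series of $\operatorname{Sym}^\bullet(\operatorname{Sym}^2\mathbb{Q}^s)/\langle \operatorname{im}(\iota_{g+1})\rangle$ --- where a straightening law on monomials in the $e_{ij}$, truncated in column-length $g$, supplies an explicit basis --- forces equality of dimensions in every degree. Combined with the surjection $\Phi$, this upgrades the surjection to the claimed ring isomorphism. The non-formal content lies entirely in this character (or Hilbert-series) match; the FFT and the vanishing of $\operatorname{im}(\iota_{g+1})$ become essentially formal once the super/Howe framework is in place.
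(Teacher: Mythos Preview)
The paper does not supply its own proof of this theorem: it is stated with attribution to \cite[Theorem 3.4]{Thompson07} and used as a black box, with only the subsequent sentence identifying the isomorphism on degree-$2$ generators. There is therefore no in-paper argument to compare your outline against.

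That said, your outline follows the standard invariant-theoretic template (First Fundamental Theorem for generation, Second Fundamental Theorem for relations via a Howe-duality dimension count), and this is indeed the shape of the argument in the cited reference. One correction worth flagging: the Howe dual of $\operatorname{Sp}(V)$ on the fermionic Fock space $\wedge^*(V\otimes\mathbb{Q}^s)$ is not $\mathrm{O}(s)$. The commutant of $\operatorname{Sp}(V)$ inside $\mathfrak{so}\bigl((V\otimes W)\oplus(V\otimes W)^*\bigr)$ is spanned by wedge-multiplication by the $\binom{s+1}{2}$ quadratic invariants $\omega_{ij}$, their contraction adjoints, and the copy of $\mathfrak{gl}(s)$ acting on $W$; these close up to a Lie algebra of dimension $2s^2+s$, which is $\mathfrak{sp}(2s)$, not $\mathfrak{so}(s)$. (In the see-saw for the spinor/Clifford representation, symplectic pairs with symplectic.) With the dual pair corrected to $(\operatorname{Sp}(2g),\mathfrak{sp}(2s))$, your plan --- decompose the invariant ring as a lowest-weight module for the dual algebra and match its character against the straightened basis of the quotient --- goes through and is essentially Thompson's argument.
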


The isomorphism in the theorem is induced by $$\wedge^2(V\otimes \mathbb Q^s)\to \text{Sym}^2\mathbb Q^s, \quad (v\otimes e_i)\wedge (w\otimes e_j)\mapsto \omega (v, w) e_{ij}\,,$$ where $\omega$ is the symplectic form on $V$. Using \eqref{tddd}, it follows that the cycle map \eqref{cyclemap} is given on generators by \begin{equation}\label{ongen}\eta_{ij}\to 2e_{ij}, \quad 1\leq i<j\leq s, \quad \theta_i\to e_{ii}, \quad 1\leq i\leq s\,.\end{equation} In particular, $\R^*(X^s)$ surjects onto $\inv$. In fact, more is true. 

\begin{prop}\label{p32}
    The cycle map gives an isomorphism
    $$
    \mathsf{R}^*(X^s) \cong \inv\,.
    $$
    Therefore, the tautological ring $\R^*(X^s)$ is independent (as a $\mathbb{Q}$-algebra) of the choice of the abelian variety $(X, \theta)$.
\end{prop}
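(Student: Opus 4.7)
The plan is to verify that the right-hand surjection in
$$\mathrm{Sym}^\bullet(\mathrm{Sym}^2\mathbb{Q}^s)\twoheadrightarrow \R^*(X^s)\twoheadrightarrow \inv$$
is injective; the first arrow sends $e_{ii}\mapsto \theta_i$ and $e_{ij}\mapsto \tfrac{1}{2}\eta_{ij}$ for $i<j$, and by Theorem \ref{t30} the composition has kernel generated by $\mathrm{im}(\iota_{g+1})$. It therefore suffices to show that the relation $\iota_{g+1}(e_I)=0$ already holds in $\R^*(X^s)\subset \CH^*(X^s)$ for every non-decreasing string $I$ of length $2(g+1)$ in $\{1,\ldots,s\}$.

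The strategy is to produce all such relations simultaneously from the single identity $\theta^{g+1}=0\in \CH^*(X)$. For $\vec n\in \mathbb{Z}^s$ consider the homomorphism $\phi_{\vec n}\colon X^s\to X$, $(x_1,\ldots,x_s)\mapsto \sum_i n_i x_i$. Using the symmetric lift of $\theta$ together with the basic identities $[n]^*\theta=n^2\theta$ and $m^*\theta=\theta_1+\theta_2+\eta_{12}$ for the addition map $m\colon X\times X\to X$ (instances of the theorem of the cube), one checks
$$\phi_{\vec n}^*\theta = \sum_{i=1}^{s} n_i^2\,\theta_i + \sum_{1\le i<j\le s} n_i n_j\,\eta_{ij}\,.$$
Since $\dim X=g$, we have $(\phi_{\vec n}^*\theta)^{g+1}=\phi_{\vec n}^*(\theta^{g+1})=0$ in $\CH^{g+1}(X^s)$ for every $\vec n$.

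Viewing the right-hand side as a polynomial in formal variables $\vec t=(t_1,\ldots,t_s)$,
$$\Theta(\vec t):=\sum_{i=1}^{s} t_i^2\,\theta_i + \sum_{1\le i<j\le s} t_i t_j\,\eta_{ij}\,,$$
the vanishing of $\Theta(\vec n)^{g+1}$ for all $\vec n\in \mathbb{Z}^s$ forces $\Theta(\vec t)^{g+1}=0$ as a polynomial identity with coefficients in $\R^{g+1}(X^s)$. Extracting the coefficient of $t_1^{a_1}\cdots t_s^{a_s}$ with $\sum_i a_i=2(g+1)$ and letting $I$ be the associated non-decreasing string, I will match this coefficient, up to a positive rational constant, with the image of $\iota_{g+1}(e_I)$ in $\R^*(X^s)$ under the map $\mathrm{Sym}^\bullet(\mathrm{Sym}^2\mathbb{Q}^s)\to \R^*(X^s)$. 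This is a perfect-matching computation: expanding $\Theta(\vec t)^{g+1}$ by choosing one summand in each of the $g+1$ factors produces, for each $t^a$, a sum indexed by partitions of the multiset $I$ into $g+1$ unordered pairs, which is exactly the combinatorial content of $\iota_{g+1}$.

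Once these relations are established, the surjection $\R^*(X^s)\twoheadrightarrow \inv$ becomes an isomorphism; the independence from $(X,\theta)$ is then immediate from Thompson's intrinsic presentation of $\inv$. The main obstacle is pure bookkeeping: tracking the factors of $2$ introduced by the convention $\eta_{ij}\leftrightarrow 2e_{ij}$ versus $\theta_i\leftrightarrow e_{ii}$, together with the factors $(g+1)!$ and $2^{g+1}$ that arise when passing between ordered and unordered perfect matchings. These constants must be tracked carefully enough to guarantee that the coefficient in question is nonzero, so that each individual relation $\iota_{g+1}(e_I)=0$ genuinely follows from the single identity $\Theta(\vec t)^{g+1}=0$.
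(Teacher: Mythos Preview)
Your proposal is correct and follows essentially the same approach as the paper's proof: both pull back the identity $\theta^{g+1}=0$ along the addition-with-weights map $X^s\to X$, expand as a polynomial in the weights, and match coefficients against the generators $\iota_{g+1}(e_I)$ of the relation ideal from Thompson's theorem. The paper works directly with formal variables $a_i$ and records the explicit constant $\tfrac{(2g+2)!}{|\Aut(I)|}\cdot\tfrac{1}{(2g+1)!!}$, whereas you first specialize to integer $\vec n$ and then pass to a polynomial identity; this is the same argument with slightly more care about when the morphism is actually defined.
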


\begin{proof}
    We only need to show that the cycle map is injective. If $a_1, \ldots , a_s$ are formal variables, and $I$ is a non-decreasing string of length $2g+2$ chosen from the set $\{1, 2, \ldots, s\}$, then the coefficient of $a_{I}=\prod_{i\in I} a_i$ in the expression
    $$
    \left(\sum_{1\leq i, j \leq s} a_i a_j e_{ij}\right)^{g+1}
    $$
    is precisely $\frac{(2g+2)!}{|\text{Aut}(I)|}\cdot \frac{1}{(2g+1)!!}\cdot \iota_{g+1}(e_I)$. Therefore, using \eqref{ongen}, all the relations in the image of $\R^*(X^s)$ under the cycle class map come from expanding the basic relation
    \begin{equation}\label{eq:basicrel}
        \left(\sum_{i=1}^{s} a_i^2 \theta_i + \sum_{1\leq i< j\leq s}a_i a_j \eta_{ij}\right)^{g+1}=0\, 
    \end{equation}
    as a polynomial in the $a_i$. The relation \eqref{eq:basicrel}  is the pullback of the identity $\theta^{g+1}=0 \in \H^*(X)$ under the morphism
    $$
    \varphi : X^s \rightarrow X : (x_1, \ldots, x_s) \mapsto a_1x_1 + \ldots + a_sx_s\, 
    $$ 
    and, therefore, also holds in the Chow ring of $X^s$. The symmetric property of $\theta\in \CH^1(X)$ is used here to calculate $\varphi^*\theta$ in  Chow:  \begin{equation}\label{varphi}\varphi^*\theta=\sum_{i=1}^{s} a_i^2 \theta_i + \sum_{1\leq i< j\leq s}a_i a_j \eta_{ij}\in \CH^1(X^s)\,.\qedhere\end{equation}\end{proof}
    
\begin{prop}\label{prop:univ family Gorenstein}
    The ring $\R^*(X^s)$ is Gorenstein, with respect to the integration map, with socle in degree $gs$ generated by
    $$
    \frac{\theta_1^g}{g!}\ldots \frac{\theta_s^g}{g!} \, .
    $$
\end{prop}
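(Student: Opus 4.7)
\textbf{Proof plan for Proposition \ref{prop:univ family Gorenstein}.}
The plan is to transfer the problem, via Proposition \ref{p32}, from the Chow ring to the ring of symplectic invariants $\inv = (\wedge^*(V \otimes \mathbb{Q}^s))^{\operatorname{Sp}(V)}$ and to show that the ambient Poincar\'e duality on $\wedge^*(V \otimes \mathbb{Q}^s)$ descends to Gorenstein duality on the invariants.

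First I would identify the socle. The top class $\prod_{i=1}^s \theta_i^g/g!$ maps, under the cycle map $\CH^*(X^s) \to \H^{2*}(X^s,\mathbb{Q})$, to the class of a single point in $X^s$, hence to a generator of the one-dimensional top exterior power $\wedge^{2gs}(V \otimes \mathbb{Q}^s)$. Since $\operatorname{Sp}(V) \subset \operatorname{SL}(V)$ acts trivially on $\wedge^{2g}V$, it acts trivially on $\wedge^{2gs}(V \otimes \mathbb{Q}^s)$, so this top piece is automatically invariant. Combined with the isomorphism $\R^*(X^s) \cong \inv$ of Proposition \ref{p32}, this shows $\R^{gs}(X^s) \cong \mathbb{Q}$, generated by $\prod_i \theta_i^g/g!$, and that the integration pairing on $\R^*(X^s)$ corresponds exactly to the standard top-pairing on $\wedge^*(V \otimes \mathbb{Q}^s)$ restricted to invariants.

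Next I would establish the perfectness of the induced pairing. The wedge pairing
\[
\wedge^k(V \otimes \mathbb{Q}^s) \times \wedge^{2gs - k}(V \otimes \mathbb{Q}^s) \longrightarrow \wedge^{2gs}(V \otimes \mathbb{Q}^s) \cong \mathbb{Q}
\]
is perfect and $\operatorname{GL}(V)$-equivariant, hence $\operatorname{Sp}(V)$-equivariant. Since $\operatorname{Sp}(V)$ is reductive, the inclusion $\inv \hookrightarrow \wedge^*(V \otimes \mathbb{Q}^s)$ admits an $\operatorname{Sp}(V)$-equivariant Reynolds projector whose kernel is the sum of non-trivial isotypic components. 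The wedge pairing vanishes between the trivial isotypic component and any non-trivial isotypic component (since the pairing of isotypic components of non-dual types is zero and the top is trivial). Therefore the restriction of the pairing to $\inv \times \inv$ is non-degenerate. Transporting this back through Proposition \ref{p32} gives the perfect pairing
\[
\R^k(X^s) \times \R^{gs-k}(X^s) \longrightarrow \R^{gs}(X^s) \cong \mathbb{Q},
\]
which together with the socle identification completes the proof.

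The only delicate point is the compatibility of the Chow-theoretic integration pairing with the cohomological wedge pairing under the isomorphism of Proposition \ref{p32}; this is ensured because the cycle class map is a ring homomorphism sending the class of a point in $\CH^{gs}(X^s)$ to the fundamental class in $\H^{2gs}(X^s,\mathbb{Q})$, so the only substantive step is the invariant-theoretic descent of Poincar\'e duality above.
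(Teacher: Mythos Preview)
Your proposal is correct and follows essentially the same approach as the paper: both reduce via Proposition~\ref{p32} to the ring of invariants $\inv$, identify the one-dimensional top piece as $\operatorname{Sp}(V)$-invariant, and deduce perfectness of the restricted pairing from the decomposition of $\wedge^*(V\otimes\mathbb{Q}^s)$ into $\operatorname{Sp}(V)$-isotypic components together with the perfectness of the ambient Poincar\'e pairing. Your write-up is slightly more explicit about the orthogonality of the trivial and non-trivial isotypic components, but the underlying argument is the same.
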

\begin{proof}
    By Proposition \ref{p32}, it suffices to show that $\inv$ is Gorenstein. Since $\Sp(V)$ is reductive, we can decompose the cohomology $\mathsf H^*(X^s, \mathbb Q)$ into sums of irreducible representations, with $\inv$ occurring as the invariant summand. The top degree part $\mathsf H^{2gs}(X^s)$ has rank $1$ and is spanned by the $\Sp(V)$-invariant class
    $$
    \frac{\theta_1^g}{g!}\ldots \frac{\theta_s^g}{g!} \, .
    $$ 
    Since the intersection pairing on $\mathsf H^*(X^s,\mathbb{Q})$ is perfect, the restriction to the invariant summand must also be perfect. Hence, $\inv$ is Gorenstein. 
\end{proof}
    
The integration map $\R^{gs} (X^s)\rightarrow \mathbb Q$ has a useful interpretation as follows. Let $\mathsf{Sym}^2(s)$ be the set of unordered pairs $I=\{i, j\}$ with $1\leq i, j\leq s$, where repetitions $i=j$ are allowed. For any assignment $$\mathsf {a} : \mathsf{Sym}^{2}(s) \rightarrow \mathbb Z_{\geq 0}\,,$$ we obtain a tautological monomial in $\R^*(X^s)$:
$$
\frac{\eta^{\mathsf a}}{{\mathsf a!}} = \prod_{I\in \mathsf{Sym}^2(s)} \frac{{\eta}^{\mathsf a(I)}}{\mathsf a(I)!}\,,
$$
where, for $I=\{i, j\}\in \mathsf{Sym}^2(s)$, we let $\eta_{I}=\eta_{ij}$ (with the convention $\eta_{ii}=\theta_i$). The monomial $\eta^\mathsf{a}$ has codimension $\sum_{I\in \mathsf{Sym}^2(s)} \mathsf a(I).$ 

\begin{lem} \label{matrix}
    Let $M$ be an $s \times s$ symmetric matrix with formal coefficients $m_{ij}$, $1 \leq i\leq j\leq s$. Then, for any function $\mathsf a : \mathsf{Sym}^2(s) \to \Z_{\geq 0},$ we have
    \begin{equation*}
        \int_{X^s}\frac{\eta^{\mathsf a}}{\mathsf a!} = [m^{\mathsf a}] \det(M)^g\, ,
    \end{equation*}
    where $[m^{\mathsf a}]$ extracts the coefficient\footnote{Here, for $I=\{i, j\}$, we set $m_I=m_{ij}$.} of 
    $\prod_{I\in \mathsf{Sym}^2(s)} m_I^{\mathsf a(I)}$.
\end{lem}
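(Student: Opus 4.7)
The plan is to package all the identities (indexed by $\mathsf{a}$) into a single generating-function statement and then prove it by pulling back along a group morphism of $X^s$. Setting $\alpha := \sum_{I \in \mathsf{Sym}^2(s)} m_I \,\eta_I = \sum_i m_{ii}\theta_i + \sum_{i<j} m_{ij}\eta_{ij} \in \R^1(X^s)[m_{ij}]$, multinomial expansion yields
\[
\int_{X^s} \frac{\alpha^{gs}}{(gs)!} \;=\; \sum_{|\mathsf{a}|=gs} m^{\mathsf a} \int_{X^s} \frac{\eta^{\mathsf a}}{\mathsf a!}.
\]
Since $\det(M)^g$ is homogeneous of degree $gs$ in the entries $m_{ij}$, matching coefficients of $m^{\mathsf a}$ reduces the lemma to the single polynomial identity $\int_{X^s}\alpha^{gs}/(gs)! = \det(M)^g$; the coefficients with $|\mathsf{a}| \ne gs$ vanish on both sides for dimensional reasons.

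To prove the reformulated identity, I would realize $\alpha$ as a pullback. For $A = (A_{ki}) \in M_s(\mathbb{Z})$, let $\Phi_A \colon X^s \to X^s$ be the group morphism with components $\varphi_k(x_1,\ldots,x_s) = \sum_i A_{ki}\, x_i$. Applying the formula $\varphi_k^*\theta = \sum_i A_{ki}^2\, \theta_i + \sum_{i<j} A_{ki}A_{kj}\, \eta_{ij}$ recalled in the proof of Proposition \ref{p32} to each row of $A$ and summing over $k$ gives
\[
\Phi_A^*\!\left(\sum_k \theta_k\right) \;=\; \sum_i (A^T A)_{ii}\,\theta_i + \sum_{i<j}(A^T A)_{ij}\,\eta_{ij},
\]
which is precisely $\alpha$ when $M = A^T A$. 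Because the classes $\theta_k$ on the target $X^s$ come from distinct factors and each satisfies $\theta_k^{g+1}=0$, the multinomial expansion collapses to $(\theta_1 + \cdots + \theta_s)^{gs}/(gs)! = \prod_k \theta_k^g/g!$, with integral $1$ over $X^s$. For $\det A \ne 0$, $\Phi_A$ is an isogeny of degree $|\det A|^{2g} = \det(M)^g$ (its action on $\H^1(X^s,\mathbb{Q}) \cong \H^1(X,\mathbb{Q}) \otimes \mathbb{Q}^s$ is $\mathrm{id} \otimes A$), so the projection formula yields $\int_{X^s}\alpha^{gs}/(gs)! = \det(M)^g$ whenever $M = A^T A$ with $A \in M_s(\mathbb{Z})$ nondegenerate.

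The last step is to extend from this subset to arbitrary symmetric $M$ by polynomial density. The map $A \mapsto A^T A$ from $M_s(\mathbb{C})$ to the space of complex symmetric matrices is surjective (every nondegenerate complex symmetric bilinear form is equivalent to the standard one), and $M_s(\mathbb{Z})$ is Zariski dense in $M_s(\mathbb{C})$, so its image is Zariski dense in the symmetric matrices, whence any two polynomials in $(m_{ij})$ agreeing on this image agree identically. The hard part of the argument will be the careful verification of the pullback formula in the second step — symmetry of the Chow-theoretic $\theta$ class is essential, since otherwise only the cohomological identity would be recovered — while the rest amounts to dimension counting, the projection formula, and standard density.
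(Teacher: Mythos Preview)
Your proof is correct and arrives at the same generating-function reformulation $\int_{X^s}\alpha^{gs}/(gs)! = \det(M)^g$ as the paper, but establishes it by a genuinely different mechanism. The paper interprets $\alpha$ as $c_1(\mathcal{L}_M)$ for the line bundle $\mathcal{L}_M = \bigotimes_i \theta_i^{m_{ii}} \otimes \bigotimes_{i<j} \mathcal{P}_{ij}^{m_{ij}}$ and invokes Hirzebruch--Riemann--Roch together with the identity $\chi(\mathcal{L}_M)^2 = \deg(\varphi_{\mathcal{L}_M})$ for the associated polarization morphism $\varphi_{\mathcal{L}_M}\colon X^s\to \widehat{X^s}\cong X^s$, whose degree is $(\det M)^{2g}$; this works directly for all integer symmetric $M$, modulo a sign resolved by inspecting the coefficient of $(m_{11}\cdots m_{ss})^g$. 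Your route instead realizes $\alpha$ as $\Phi_A^*\bigl(\sum_k\theta_k\bigr)$ with $M=A^TA$ and then uses only the projection formula for the isogeny $\Phi_A$ and the elementary evaluation $\int_{X^s}\prod_k\theta_k^g/g!=1$. The trade-off: you avoid Riemann--Roch and the abelian-variety input $\chi(\mathcal{L})^2=\deg\varphi_{\mathcal{L}}$, and no sign ambiguity arises since $\deg\Phi_A=(\det A)^{2g}=(\det M)^g$ automatically; on the other hand you need the extra Zariski-density argument for the image of $A\mapsto A^TA$, which the paper's direct parametrization by $M$ sidesteps.
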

\begin{proof}
    Assume $M$ has integer coefficients, and let $\mathcal L_M\to X^s$ denote the line bundle
    $$
    \mathcal L_M = \prod_{i=1}^s\theta_i^{m_{ii}}\otimes \prod_{1 \leq i < j\leq s}\mathcal P_{ij}^{m_{ij}}\,.
    $$
    The associated morphism $$\varphi_{{\mathcal L}_M} : X^s \rightarrow \widehat{X^s} \cong X^s\,$$ takes the form $$(x_1, \ldots, x_s) \mapsto \left(\sum_{j=1}^{s} m_{1j}x_j, \ldots, \sum_{j=1}^{s} m_{sj}x_j\right)\,.$$ The degree of $\varphi_{\mathcal{L}_M}$ is easily calculated to be $(\det M)^{2g}.$ Therefore, by \cite [Corollary 3.6.2]{BL}, $$\chi(\mathcal L_M)^2 = 
    \deg(\varphi_{\mathcal{L}_M}) = \det(M)^{2g}\,.$$ By Hirzebruch-Riemann-Roch, we obtain
    $$
    \int_{X^s}\frac{(\sum_{i<j} m_{ij}\eta_{ij} + \sum_{i}m_{ii}\theta_i)^{gs}}{(gs)!} = \chi(\mathcal L_M) = \pm\det(M)^g\, ,
    $$
    which we view as a polynomial identity in the variables $m_{ij}$. The sign is easily seen to be positive by comparing the coefficients of $(m_{11} \cdots m_{ss})^g$ on both sides. 
\end{proof}

\subsection{Tautological projection} \label{tproj}
Proposition \ref{prop:univ family Gorenstein} shows that the pairing $$\CH^*(X^s)\times \mathsf {R}^{gs-*}(X^s)\rightarrow \CH^{gs}(X^s)\rightarrow \mathbb Q\,$$ is nondegenerate when restricted to tautological classes on the leftmost factor. The identification $\left(\mathsf {R}^{gs-*}(X^s)\right)^{\vee} \cong \mathsf R^*(X^s)$ gives rise to a canonical projection operator
$$
\taut_X^s: \CH^*(X^s)\rightarrow \mathsf R^*(X^s)\,.
$$
The basic property of the tautological projection is
$$
\int_{X^s} \alpha \cdot \beta = \int_{X^s} \taut_X^s(\alpha)\cdot \beta\,,
$$
for all $\alpha\in \CH^*(X^s)$ and $\beta\in \R^*(X^s).$
\begin{prop}
    The operator $\taut_X^s$ is a $\mathbb{Q}$-algebra homomorphism if and only if every class in $\CH^*(X^s)$ is tautological in cohomology. In particular, $\taut_X^s$ is a homomorphism for a general abelian variety $(X, \theta)\in \A_g$,
    but not for every abelian variety $(X,\theta)$.
\end{prop}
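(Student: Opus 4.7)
The plan is to prove the equivalence in two steps and then establish the ``in particular'' clause using the Mumford--Tate group of a very general abelian variety for one direction and the self-product $E\times E$ of an elliptic curve for the other.

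For the forward direction, I would observe that the assumption places the cycle class map $\mathrm{cl}\colon \CH^*(X^s)\to H^{2*}(X^s,\mathbb{Q})$ inside the subring $\inv$. Proposition~\ref{p32} identifies the restriction $\mathrm{cl}|_{\R^*(X^s)}\colon \R^*(X^s)\xrightarrow{\sim}\inv$ as a ring isomorphism. Composing with its inverse gives a ring homomorphism $\phi\colon \CH^*(X^s)\to \R^*(X^s)$ which, because integration on $X^s$ factors through cohomology, satisfies $\int_{X^s}\phi(\alpha)\cdot\gamma=\int_{X^s}\alpha\cdot\gamma$ for all $\gamma\in\R^*(X^s)$. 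The Gorenstein property (Proposition~\ref{prop:univ family Gorenstein}) then forces $\phi=\taut_X^s$, which is therefore a ring homomorphism.

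For the reverse direction, assume $\taut_X^s$ is a ring homomorphism and fix $\alpha\in\CH^*(X^s)$. For any $\beta\in\CH^*(X^s)$ and any $\gamma\in\R^*(X^s)$, I would chain the homomorphism property with the characterizing identity of $\taut_X^s$ applied to both $\alpha\beta$ and $\beta$ to obtain
\[
\int_{X^s}\alpha\beta\gamma=\int_{X^s}\taut_X^s(\alpha\beta)\cdot\gamma=\int_{X^s}\taut_X^s(\alpha)\cdot\taut_X^s(\beta)\cdot\gamma=\int_{X^s}\taut_X^s(\alpha)\cdot\beta\cdot\gamma,
\]
the last equality using $\taut_X^s(\alpha)\cdot\gamma\in\R^*(X^s)$. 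Taking $\gamma=1$ shows that $\alpha-\taut_X^s(\alpha)$ is numerically trivial on the abelian variety $X^s$. Lieberman's theorem on the coincidence of numerical and homological equivalence for abelian varieties then yields $[\alpha]=[\taut_X^s(\alpha)]\in\inv$, so $\alpha$ is tautological in cohomology.

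For the ``in particular'' clause, I would argue as follows. For a very general $(X,\theta)\in\mathcal{A}_g$ the Mumford--Tate group is the full symplectic similitude group, so every Hodge class on $X^s$ is $\Sp(V)$-invariant and hence lies in $\inv$; since algebraic classes are Hodge, every Chow class is tautological in cohomology and the forward direction applies. To exhibit a specific $X$ for which $\taut_X^s$ fails to be a homomorphism, I would take $s=1$ and $X=E\times E$ with product principal polarization $\theta=F_1+F_2$, where $F_1=[E\times\{0\}]$ and $F_2=[\{0\}\times E]$. A short computation using $F_i\cdot\theta=F_1\cdot F_2=1$ yields $\taut_X^1(F_i)=\tfrac12\theta$, so $\taut_X^1(F_1)\cdot\taut_X^1(F_2)=\tfrac14\theta^2=\tfrac12[\mathrm{pt}]$, whereas $\taut_X^1(F_1\cdot F_2)=\taut_X^1([\mathrm{pt}])=[\mathrm{pt}]$, contradicting the homomorphism property. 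The main obstacle will be the reverse implication: the passage from the numerical triviality of $\alpha-\taut_X^s(\alpha)$ to its homological triviality is not formal and relies on Lieberman's theorem, which is the genuine input specific to abelian varieties.
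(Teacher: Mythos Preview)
Your proof is correct and follows essentially the same approach as the paper: both directions hinge on the identity $\int_{X^s}\alpha\cdot\beta=\int_{X^s}\taut_X^s(\alpha)\cdot\beta$ together with Lieberman's theorem that numerical and homological equivalence coincide on abelian varieties, and the ``general $X$'' clause is handled via the invariant Hodge classes (you phrase it through the Mumford--Tate group, the paper cites Hazama's result that the Hodge ring of $X^s$ is generated in degree~1 together with the computation of $\mathrm{NS}(X^s)_{\mathbb Q}$ for general $X$). Two minor remarks: your ``forward'' and ``reverse'' labels are swapped relative to the usual reading of the iff, and your explicit counterexample $X=E\times E$ with $s=1$ is a genuine addition---the paper states ``not for every $(X,\theta)$'' but does not write down an example.
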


\begin{proof}
    Assume first that the homomorphism property holds. Let $\alpha, \beta\in \CH^*(X^s)$ be classes of complementary degrees. Since $\alpha\cdot \beta$ is a top degree class, we have
    $$
    \int_{X^s} \alpha\cdot \beta = \int_{X^s} \taut_X^s(\alpha\cdot \beta) = \int_{X^s} \taut_X^s(\alpha)\cdot \taut_X^s(\beta) = \int_{X^s} \taut_X^s(\alpha)\cdot \beta\,,
    $$
    where in the first and last equality, the definition of the tautological projection is used. Since $\beta$ is arbitrary, the cycle $\alpha - \taut_X^s(\alpha)$ must be numerically trivial, hence homologically trivial by \cite[Theorem 4]{Lib}. Equivalently, the image of $\alpha$ in cohomology must be tautological. 
    
    Conversely, assume that every Chow class on $X^s$ is tautological in cohomology. For every class  $\alpha \in \CH^*(X^s)$, we can write
    $$
    \alpha = \alpha_0 + \alpha_1\, ,
    $$
    where $\alpha_0$ is tautological, and $\alpha_1$ is homologically trivial. Having such a decomposition forces $\alpha_0=\taut_X^s(\alpha)$. Indeed, we can easily verify the defining property
    $$
    \int_{X^s} \alpha \cdot \gamma = \int_{X^s} (\alpha_0+\alpha_1)\cdot \gamma = \int_{X^s} \alpha_0\cdot \gamma
    $$
    for all $\gamma\in \R^*(X^s).$ Therefore, for all $\alpha, \beta\in \CH^*(X^s)$, we have
    $$
    \alpha\cdot \beta = (\alpha_0+\alpha_1)\cdot (\beta_0+\beta_1)= \alpha_0\cdot \beta_0 + (\alpha_0\cdot \beta_1+\alpha_1\cdot \beta_0+ \alpha_1\cdot \beta_1)\, .
    $$
    Since the last $3$ terms are homologically trivial,  we obtain
    $$
    \taut_X^s(\alpha\cdot \beta) = \alpha_0\cdot \beta_0=\taut_X^s(\alpha)\cdot \taut_X^s(\beta)\, .
    $$ 
    
    Let $(X,\theta)\in \A_g$ be a general abelian variety.  The algebra of Hodge classes $\mathsf{Hdg}(X^s)$ on $X^s$ is generated in degree $1$, see \cite[Proposition 1.6] {Haz} which applies thanks to \cite [Proposition 17.4.2]{BL}. 
    Moreover, for a general $X$, the rational N\'eron-Severi group of $X^s$ is generated by $\theta_i$, for $1\leq i\leq s$, and $\eta_{ij}$, for $1\leq i<j\leq s.$ \footnote{The N\'eron-Severi group is the Rosati invariant part of $\text{Hom}(X^s, \widehat X^s)_{\mathbb Q} = \text{Hom}(X^s, X^s)_{\mathbb Q} = \mathsf{Mat}_s(\mathbb Q)$, which is the space of symmetric matrices, for $X$ general. The standard basis of symmetric matrices corresponds to the classes $\theta_i$ and $\eta_{ij}$.}
    Since the image of $\CH^*(X^s)$ lies in $\mathsf{Hdg}(X^s),$ and the latter is tautological,  every rational Chow class on $X^s$ is tautological in cohomology. 
\end{proof}

\subsection{Properties of tautological rings}

By definition, tautological rings are  preserved under pullbacks via  projections forgetting a factor: 
\begin{equation}\label{ddfs}
\text{pr}:X^{n+1}\to X^n\, , \ \ \ \
\text{pr}^*:\mathsf{R}^*(X^n) \rightarrow \mathsf{R}^*(X^{n+1})\, .
\end{equation}
By \cite[Theorem 9.10]{BMP}, tautological rings are also preserved under the corresponding pushforwards, 
\begin{equation} \label{ddfr}
\text{pr}_*:\mathsf{R}^*(X^{n+1}) \rightarrow \mathsf{R}^*(X^{n})\, .
\end{equation}
In fact, much more general versions of the properties \eqref{ddfs} and \eqref{ddfr} hold.

Let $\mathsf A=(a_{ij})$ be a matrix of size $t\times s$ with integer coefficients. The matrix $\mathsf{A}$ induces a natural morphism \begin{equation}\label{fa}f_{\mathsf A}: X^s \to X^t, \quad (x_1, \ldots, x_s) \to \left(\sum_{j=1}^{s} a_{1j} x_j, \ldots, \sum_{j=1}^{s} a_{tj} x_j\right)\,.\end{equation}
The diagonal inclusion $X \rightarrow X \times X$ is obtained from the
case $\mathsf{A}=\begin{pmatrix} 1\\1 \end{pmatrix}\, .$

\begin{prop}\label{pushfa} The tautological rings are preserved under pushforward, $$(f_{\mathsf A})_*: 
\mathsf{R}^*(X^{s}) \rightarrow \mathsf{R}^*(X^{t})\, .$$ 
\end{prop}
\proof 

For all classes $\alpha \in \CH^*(X^s)$, we have
$$
(f_{\mathsf A})_*\alpha = \widehat{\text{pr}}_* \left({\text{pr}}^*\alpha \cdot \left[\Gamma_{\mathsf A}\right]\right)\, ,
$$
where $\Gamma_{\mathsf A}\subset X^{s+t}$ is the graph of $f_{\mathsf A}$, and $\text{pr},$ $\widehat{\text{pr}}$ are the projections to $X^s$ and $X^t$ respectively. By the pushforward property \eqref{ddfr}, if  $\left[\Gamma_{\mathsf A}\right] \in \R^* 
(X^{s+t})$, then 
$(f_{\mathsf A})_*\alpha \in \R^*(X^t)$.
We may further assume $t=1$ by using the following equation:
$$
\left[\Gamma_{\mathsf A}\right]=\widehat{\text{pr}}_1^* \left[\Gamma_{1}\right]\cdots \widehat{\text{pr}}_t^*\left[\Gamma_t\right]\,,
$$
where $\Gamma_i\subset X^{s+1}$ is the graph of the morphism
$$
(x_1, \ldots, x_s)\to \sum_{j=1}^{s} a_{ij} x_j\,.
$$
Here, for $1\leq i\leq t$, the projection $\widehat{\text{pr}}_i:X^{s+t}\to X^{s+1}$ records the first $s$ coordinates as well as the coordinate in position $s+i$.  

For $t=1$, we write $(a_1, \ldots, a_s)$ for the entries of $\mathsf A$, and we will prove $\left[\Gamma_{\mathsf A}\right]\in \R^g(X^{s+1})$. Setting 
$$
\varphi:X^{s+1}\to X, \quad (x_1, \ldots, x_s, x_{s+1})\to x_{s+1}-a_1 x_1-\ldots -a_s x_s\,,
$$
we have
$$
\left[\Gamma_{\mathsf A}\right] = \varphi^*([0])=\varphi^* \left(\frac{\theta^g}{g!}\right)\,.
$$
Finally, to show  $\varphi^*\theta\in \CH^1(X^{s+1})$ is tautological, 
we use the identity
$$
\varphi^*\theta = \theta_{s+1} + \sum_{i=1}^{s} a_i^2\theta_i  + \sum_{1\leq i<j\leq s} a_i a_j \eta_{ij} - \sum_{i=1}^{s} a_i \eta_{i, s+1}\,,
$$
which is a particular case of \eqref{varphi}. \qedhere

\begin{prop}\label{pullfa} The tautological rings are preserved under pullback, $$(f_{\mathsf A})^*: 
\mathsf{R}^*(X^{t}) \rightarrow \mathsf{R}^*(X^{s})\, .$$ 
\end{prop}

\begin{proof} By the definition of $\R^*(X^t)$, we can reduce
to the case where $t=1$ or $t=2$. The $t=1$ case is a consequence of
\eqref{varphi}. For $t=2$, we write $$\mathsf{A}=\begin{pmatrix} a_1 & \ldots & a_s \\ b_1 & \ldots & b_s \end{pmatrix}\, ,$$
and we use \eqref{varphi} and the equation
$$(f_{\mathsf{A}})^*(\eta_{12}) = \sum_{1\leq i<j\leq s} (a_i b_j+a_jb_i) \,\eta_{ij} + 2\sum_{i=1}^{s}a_i b_i \,\theta_i\,.$$ The latter is easily verified by induction on $s$ and the See-saw Theorem. 
\end{proof}

\subsection{The tautological ring of $\X_g^s$}\label{s5454} We now lift the constructions
of Section \ref{vee4r} for a fixed abelian variety $(X,\theta)$ to the moduli space $\A_g$. Let $\pi:\mathcal X_g\to \mathcal A_g$ be the universal abelian variety, and let $\theta\in \mathsf {CH}^1(\X_g)$ be the symmetric theta class normalized by requiring the restriction of $\theta$ to the zero section of $\pi$ to vanish. Over the $s$-fold fibered product $\pi^s: \mathcal X_g^s\to \mathcal A_g$, we have divisor classes
$$
\big\{\, \theta_i\in \CH^1(\X_g^s)\, \big\}_{1\leq i \leq s}\ \ \  \text{and} \ \ \
\big\{\, \eta_{ij} \in \CH^1(\X_g^s)
\, \big\}_{1\leq i\neq j\leq s}\, ,
$$
defined exactly as in Section \ref{vee4r} by pullback from the factors. As before, we have $\eta_{ij}=\eta_{ji}$, and we set $\eta_{ii}=\theta_i$. 
\begin{definition}\label{deftaut}
    The \emph{tautological ring} of $\X_g^s$ is the subalgebra 
    $$\R^*(\mathcal X_g^s)\subset \CH^*(\X_g^s)$$ generated over $\mathsf R^*(\A_g)$ by the classes $\theta_i,$ for $1\leq i\leq s$, and $\eta_{ij}$ for all $1\leq i< j\leq s.$ The subalgebra 
    $$    \R^*_{\mathsf{vert}}(\mathcal X_g^s) \subset \R^*(\mathcal X_g^s)$$ generated 
    over $\mathbb{Q}$ by the classes $\theta_i$, for $1\leq i\leq s$, and $\eta_{ij}$
    for all $1\leq i< j\leq s$ 
    is the \emph{vertical} tautological ring.
\end{definition}

The proofs of Propositions \ref{pushfa} and \ref{pullfa} carry over to families of abelian varieties, hence the same pushforward and pullback properties  also hold for the tautological rings $\R^*(\mathcal X_g^s)$ and $\R^*_{\mathsf{vert}}(\X_g^s)$.
\vskip.1in
Let $\overline {\X}_g$ be a nonsingular toroidal compactification of $\X_g$ lying over a nonsingular toroidal compactification $\overline{\A}_g$ of $\A_g$. Using the vanishing $\lambda_g=0$ on $\overline{\X}_g^s\smallsetminus \X_g^s$ established in \cite{CMOP}, we can define the $\lambda_g$-pairing\footnote{We may assume $\overline{\X}_g^s$ is nonsingular \cite [Chapter VI]{CL}, so that the intersection product is well-defined.} on $\CH^*(\X_g^s)$ exactly as in \eqref{par2}:
\begin{equation}\label{eq:Lgpairing_sfold}
    \CH^*(\X_g^s) \times \CH^{gs + \binom{g}{2}-*}(\X_g^s) \rightarrow \mathbb Q\, , \quad (\alpha, \beta)=\int_{\overline \X_g^s}\bar \alpha\cdot \bar \beta\cdot \lambda_g\,.
\end{equation} 
\vskip.05in

When $s=1$, we have the isomorphism
$$
\R^*_{\textrm{vert}}(\X_g)=\mathbb Q[\theta]/(\theta^{g+1})\, .
$$
The vanishing $\theta^{g+1}=0\in \CH^{g+1}(\X_g)$ holds under our normalization conventions as explained in \cite [Theorem 2.1]{GZ}, see also \cite{DM, Hai}.
For $s=2,$ the vertical tautological ring was studied and shown to be Gorenstein in \cite [Theorem 3.3]{GZ}. The tautological ring for families of abelian varieties was also considered in \cite[Section 9.4] {BMP}, \cite[Section 6]{GrushevskyHulekTommasiStable} and \cite{MoonenChow}. The tautological ring $\R^*(\X^s_g)$ coincides with the definition of \cite{BMP} for the universal family.

\begin{thm}\label{gorgor}
    The following isomorphisms determine the tautological rings of $\X_g^s$ for all $s$:
    \begin{itemize}
    \item[\emph{(i)}] $\R^*_{\mathsf{vert}}(\X_g^s) \cong \R^*(X^s)$ for every fixed principally polarized abelian variety $(X,\theta)$. Therefore,
    $$
    \R^*_{\mathsf{vert}}(\X_g^s)
    \stackrel{\sim}{=} \inv\, .
    $$

    \item[\emph{(ii)}] The natural map $\R^*(\A_g) \otimes \R^*_{\mathsf{vert}}(\X_{g}^s) \longrightarrow \R^*(\X_{g}^s)$ is an isomorphism{\footnote{All tensor products here are $\otimes_{\mathbb{Q}}$.}}. Therefore, $$\R^*(\X_g^s)\cong \mathsf L_{g-1}\otimes \inv\, ,$$ where $\mathsf L_{g-1}$ is the Chow ring of the Lagrangian Grassmannian of $(g-1)$-dimensional Lagrangian subspaces of $\mathbb C^{2g-2}.$   
    \end{itemize} 
    Moreover, the pairing \eqref{eq:Lgpairing_sfold} is perfect when restricted to $\R^*(\X_g^s)$.
\end{thm}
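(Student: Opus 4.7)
The plan is to establish (i) by comparing relations on the family $\X_g^s$ with the known presentation of $\R^*(X^s)\cong \inv$, then to deduce (ii) via a projection-formula argument, and finally to obtain perfectness of the $\lambda_g$-pairing as an orthogonal decomposition of pairings.

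For (i), restriction over any closed point sends $\R^*_{\mathsf{vert}}(\X_g^s)$ onto $\R^*(X^s)\cong \inv$, since the generators $\theta_i,\eta_{ij}$ of the source restrict to the corresponding generators of the target. By Proposition \ref{p32}, every relation in $\inv$ is a polynomial coefficient of the basic identity $\bigl(\sum_i a_i^2\theta_i+\sum_{i<j}a_ia_j\eta_{ij}\bigr)^{g+1}=0$ in formal variables $a_i$. This identity lifts to the family: for each $\underline a\in\mathbb Z^s$, the summation morphism
\begin{equation*}
\varphi_{\underline a}\colon \X_g^s\to\X_g,\qquad (x_1,\dots,x_s)\mapsto \sum_i a_i x_i,
\end{equation*}
pulls back the symmetric normalized $\theta\in \CH^1(\X_g)$ to $\sum a_i^2\theta_i+\sum_{i<j}a_ia_j\eta_{ij}$, and the family-wide vanishing $\theta^{g+1}=0$ in $\CH^*(\X_g)$ from \cite{DM, Hai, GZ} produces the required relations on $\X_g^s$. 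This gives a surjection $\inv\twoheadrightarrow \R^*_{\mathsf{vert}}(\X_g^s)$ whose composition with the fiber restriction is the identity on $\inv$; both arrows must therefore be isomorphisms.

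For (ii), the natural map $\R^*(\A_g)\otimes \R^*_{\mathsf{vert}}(\X_g^s)\to\R^*(\X_g^s)$, $\alpha\otimes b\mapsto (\pi^s)^*\alpha\cdot b$, is surjective by Definition \ref{deftaut}. For injectivity, fix a homogeneous basis $\{b_i\}$ of $\R^*_{\mathsf{vert}}(\X_g^s)\cong \inv$ with dual basis $\{b_i^\vee\}$ guaranteed by the perfect pairing of Proposition \ref{prop:univ family Gorenstein}, satisfying $\int_{X^s}b_i\cdot b_j^\vee=\delta_{ij}$. A vertical class of codimension $<gs$ pushes forward to zero for dimensional reasons, while a vertical class of codimension $>gs$ vanishes in $\R^*_{\mathsf{vert}}\cong \inv$ by the Gorenstein socle condition. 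Consequently $\pi^s_*(b_i\cdot b_j^\vee)=\delta_{ij}$ as an element of $\R^0(\A_g)=\mathbb Q$: the only nonvanishing case $\deg b_i=\deg b_j$ yields a top-vertical class whose fiberwise integral $\delta_{ij}$ is constant over the connected base $\A_g$. Given a relation $\sum_i (\pi^s)^*\alpha_i\cdot b_i=0$ with $\alpha_i\in \R^*(\A_g)$, multiplication by $b_j^\vee$ and application of $\pi^s_*$ via the projection formula yields $\alpha_j=0$ for every $j$.

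For perfectness of the $\lambda_g$-pairing, expand any $\alpha,\beta\in \R^*(\X_g^s)$ of complementary codimension as $\alpha=\sum_i (\pi^s)^*\alpha_i\cdot b_i$ and $\beta=\sum_j(\pi^s)^*\beta_j\cdot b_j^\vee$ using (ii). Since $\alpha_i,\beta_j$ and $\lambda_g$ are pulled back from $\A_g$, and since $\lambda_g$ vanishes on the boundary of any nonsingular toroidal compactification $\overline{\A}_g\supset \A_g$ by \cite{CMOP} (so that the integral is independent of the lift chosen for the vertical factors), the projection formula combined with $\pi^s_*(b_i\cdot b_j^\vee)=\delta_{ij}$ gives
\begin{equation*}
\langle \alpha,\beta\rangle \;=\; \sum_i\int_{\overline{\A}_g} \overline{\alpha_i}\cdot \overline{\beta_i}\cdot \lambda_g.
\end{equation*}
The $\lambda_g$-pairing on $\R^*(\X_g^s)$ thereby decomposes as an orthogonal direct sum of copies of the $\lambda_g$-pairing on $\R^*(\A_g)$, indexed by the basis of $\R^*_{\mathsf{vert}}(\X_g^s)$; each summand is perfect by Theorem \ref{thm:vdG_tautological}(iii), so the total pairing is perfect. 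The main obstacle is securing the scalar identity $\pi^s_*(b_i\cdot b_j^\vee)=\delta_{ij}$, which requires simultaneously invoking the dimensional vanishing of push-forwards in codimension $<gs$ and the Gorenstein socle-vanishing of $\inv$ in codimension $>gs$, with careful bookkeeping of degrees under the projection formula.
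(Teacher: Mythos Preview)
Your proof is correct, and part (i) is argued essentially the same way as in the paper: relations in $\inv$ come from $\theta^{g+1}=0$, which lifts to $\X_g$, and restriction to a fiber supplies the inverse. The approaches diverge at (ii). The paper observes that the source $\R^*(\A_g)\otimes\R^*(X^s)$ is Gorenstein with socle $\lambda_1\cdots\lambda_{g-1}\otimes\frac{\theta_1^g}{g!}\cdots\frac{\theta_s^g}{g!}$, computes its $\lambda_g$-evaluation to be $\gamma_g\neq 0$, and then invokes the general fact that a graded surjection from a Gorenstein ring which is nonzero on the socle must be injective; this yields (ii), (i), and perfectness of the pairing in one stroke. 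Your argument instead chooses a vertical basis with Poincar\'e dual, shows $\pi^s_*(b_i\cdot b_j^\vee)=\delta_{ij}$ by combining dimensional vanishing below the socle with the Gorenstein vanishing of $\inv$ above it, and uses this to build an explicit left inverse via the projection formula. Your route buys an explicit orthogonal decomposition of the $\lambda_g$-pairing as a sum of copies of the $\lambda_g$-pairing on $\R^*(\A_g)$, which is a useful formula in its own right; the paper's route is shorter and handles (i), (ii), and perfectness uniformly without having to first establish (i) separately. One small point worth tightening in your write-up: the final integral is over $\overline{\X}_g^s$, so the identity $\pi^s_*(b_i\cdot b_j^\vee)=\delta_{ij}$ must really be applied to $\bar\pi^s_*(\bar b_i\cdot\bar b_j^\vee)$, which equals $\delta_{ij}$ only modulo boundary-supported classes; you correctly note that $\lambda_g$ annihilates these, but the step deserves to be stated as such rather than folded into ``independence of lifts.''
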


\begin{proof}
    There is a natural surjection \begin{equation}\label{surjjj}\mathsf R^*(X^s)\to \mathsf {R}^*_{\textnormal{vert}}(\X_g^s)\,,\end{equation} sending the classes $\theta_i$ and $\eta_{ij}$ on the left to the analogous classes on the right. The surjection \eqref{surjjj} is well-defined because all relations between $\theta_i$ and $\eta_{ij}$  on $X^s$ also hold on the universal family $\X_g^s$. Indeed, by the proof of Proposition \ref{p32}, the ideal of relations in $\mathsf R^*(X^s)$ is generated by pullbacks of the identity $$\theta^{g+1}=0\in \CH^*(X)$$ under multiplication and addition maps. However, the parallel identity 
    $$\theta^{g+1}=0\in \CH^*(\X_g)$$ also holds for 
    the normalized theta divisor on the universal abelian variety $\pi:\X_g\to \A_g$.  
    
    Using \eqref{surjjj}, we obtain a natural map \begin{equation}\label{mortau}\tau:\mathsf R^*(\A_g)\otimes \mathsf R^*(X^s)\to \mathsf R^*(\A_g)\otimes \mathsf R_{\textnormal{vert}}^*(\X^s_g)\to\R^*(\X_g^s)\,,\end{equation} which is surjective as well. The tensor product $\mathsf R^*(\A_g)\otimes \mathsf R^*(X^s)$ is Gorenstein with socle in degree $\binom{g}{2}+gs$, generated by the class \begin{equation}\label{cll}\lambda_1\cdots \lambda_{g-1} \otimes \frac{\theta_1^g}{g!} \cdots \frac{\theta_s^g}{g!}\,.\end{equation} The image of the socle generator under $\tau$ in $\mathsf R^*(\X_g^s)$ is non-zero: 
    the $\lambda_g$-evaluation \eqref{eq:Lgpairing_sfold} of the $\tau$-image \eqref{cll} is  $$\int_{\overline{\X}_g^s}\lambda_1\cdots \lambda_g \cdot \frac{\bar \theta_1^g}{g!} \cdots \frac{\bar \theta_s^g}{g!}=\int_{\bar \A_g} \lambda_1\cdots \lambda_g=\gamma_g\neq 0\,.$$ The constant $\gamma_g$ is given by \eqref{constantintro}, and the bars indicate any extensions of the theta classes over the boundary. 
    
    On general grounds, {\em if $A$ is Gorenstein with socle in degree $d$, and if $\tau: A\to B$ is a surjective graded morphism which is non-zero in degree $d$, then $\tau$ must be an isomorphism}.\footnote{If $a\in \text{Ker } \tau$ and $a\neq 0$, we can find $a^* \in A$ such that $a a^*$ equals the socle generator for $A$. Then $$\tau(aa^*)=\tau(a)\tau(a^*)=0\, $$ which contradicts the non-vanishing of 
    $\tau$ in degree $d$.} Item (ii) then follows by applying the general statement
    to the morphism  \eqref{mortau}: $$\tau:\mathsf R^*(\A_g)\otimes \mathsf R^*(X^s)\to \mathsf R^*(\X_g^s)\,.$$ For item (i), we apply the same statement to the morphism \eqref{surjjj}. Indeed, the non-vanishing  of the $\tau$-image of \eqref{cll} also proves that \eqref{surjjj} is nonzero in degree $gs$.
\end{proof}

By the same reasoning as in Section \ref{tproj}, Theorem \ref{gorgor} yields a projection operator
$$
\taut^s : \CH^*(\X_{g}^s) \to \R^*(\X_g^s)\, .
$$
In addition, we have the projection operators
$$
\taut: \CH^*(\A_g)\to \R^*(\A_g)\, \quad \text{and} \quad \taut^s_X: \CH^*(X^s)\to \R^*(X^s)\, ,
$$
where $(X,\theta)$ is a fixed abelian variety. 

To understand the relationship between the above three projection operators, consider first the following general situation. Let $f: X \rightarrow S$ be a principally polarized abelian scheme of dimension $g$ over an irreducible base $S$, corresponding to a proper morphism $\iota:S\to \A_g.$ We have the constructions:
\begin{itemize}
    \item [(i)] $X^{s}=X\times_{S}  \cdots \times_{S}X$ is the $s$-fold fibered product, and $f^s:X^s\to S$ is the canonical morphism, 
    \item [(ii)] $j: X^s \longrightarrow \X_g^s$ is the map induced by $\iota$,
    \item [(iii)] $X_{\textnormal{gen}}$ is the principally polarized abelian variety over the generic point of $S$.
\end{itemize}
The geometry is expressed by the diagram:
\begin{center}
\begin{tikzcd}
X^s \arrow[r, "j"] \arrow[d, "f^s"'] 
  & \mathcal X_g^s \arrow[r] \arrow[d, "\pi^s"'] 
  & \overline{\mathcal X}_g^s \arrow[d, "\bar{\pi}^s"] \\
S \arrow[r, "\iota"'] 
  & \mathcal A_g \arrow[r] 
  & \ \overline{\mathcal A}_g\,.
\end{tikzcd}
\end{center}

\begin{lem}\label{lem:integraction_projections}
    Let $\alpha \in \CH^m( X^s)$. Assume, for every $\beta\in \mathsf R^k_{\textnormal{vert}}(\X_g^s)$ with $m+k\neq gs$,  the vanishing \begin{equation}\label{cond}f^s_*(\alpha \cdot j^*\beta) =0\in \CH^*(S)\,
    \end{equation} 
    holds. Then, we have
    $$
    \taut^s (j_*\alpha) = \taut_{X_{\textnormal{gen}}}^s(\alpha_{\textnormal{gen}})\cdot \left(\pi^s\right)^*\taut(\iota_*[S])\in \R^*(\X_g^s)\, , $$ where $\alpha_{\textnormal{gen}}\in \CH^*(X_{\textnormal{gen}}^s)$ is the restriction of $\alpha$ to the generic fiber. 
\end{lem}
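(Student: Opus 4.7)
The plan is to verify the claimed equality of classes in $\R^*(\X_g^s)$ by checking that both sides produce the same $\lambda_g$-pairings against every test class, invoking the non-degeneracy of this pairing from Theorem \ref{gorgor}. Using the tensor decomposition $\R^*(\X_g^s) \cong \R^*(\A_g)\otimes \R^*_{\mathsf{vert}}(\X_g^s)$ of Theorem \ref{gorgor}(ii), it suffices to test against classes of the form $(\pi^s)^*\delta\cdot\beta$ with $\delta\in \R^*(\A_g)$ and $\beta\in \R^*_{\mathsf{vert}}(\X_g^s)$.

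For the left-hand side, I would apply the projection formula together with $\pi^s\circ j = \iota\circ f^s$ and $\lambda_g = (\bar\pi^s)^*\lambda_g$ to rewrite
$$
\langle j_*\alpha,\, (\pi^s)^*\delta\cdot\beta \rangle = \int_{\overline{\A}_g}\bar\delta\cdot \lambda_g\cdot \overline{\iota_* f^s_*(\alpha\cdot j^*\beta)}.
$$
The hypothesis \eqref{cond} forces $f^s_*(\alpha\cdot j^*\beta)$ to vanish unless $\deg\beta=gs-m$, in which case it lies in $\CH^0(S)=\mathbb{Q}\cdot[S]$ with coefficient equal to the generic-fibre integral
$$
c(\beta):=\int_{X_{\mathsf{gen}}^s}\alpha_{\mathsf{gen}}\cdot\beta|_{X_{\mathsf{gen}}^s}=\int_{X_{\mathsf{gen}}^s}\taut_{X_{\mathsf{gen}}}^s(\alpha_{\mathsf{gen}})\cdot \beta|_{X_{\mathsf{gen}}^s},
$$
by the defining property of $\taut_{X_{\mathsf{gen}}}^s$. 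Thus the pairing collapses to $c(\beta)\cdot \langle \taut(\iota_*[S]),\delta\rangle$ via the definition of $\taut$ on $\A_g$. A parallel pushforward computation shows that the right-hand side pairs against $(\pi^s)^*\delta\cdot\beta$ to give the same expression: the vertical product $\taut_{X_{\mathsf{gen}}}^s(\alpha_{\mathsf{gen}})\cdot\beta$ lies in $\R^{m+\deg\beta}_{\mathsf{vert}}(\X_g^s)$ and pushes forward along $\bar\pi^s$ to $c(\beta)\cdot[\A_g]$ when $m+\deg\beta=gs$, and to zero otherwise --- above that degree by vanishing of $\R^{>gs}_{\mathsf{vert}}=0$ (Theorem \ref{gorgor}(i)), and below it by dimension considerations of $\pi^s_*$. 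Matching the two expressions and invoking Gorenstein duality finishes the argument.

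The main technical point to handle carefully is the compatibility of Chow lifts across the toroidal compactification $\overline{\X}_g^s$, specifically ensuring that the identities $\overline{\pi^s_*(j_*\alpha\cdot\beta)}\cdot \lambda_g = \bar\pi^s_*\bigl(\overline{j_*\alpha}\cdot \bar\beta\cdot \lambda_g\bigr)$ and its analogue for $\taut_{X_{\mathsf{gen}}}^s(\alpha_{\mathsf{gen}})\cdot \beta$ hold after passing to the compactification. Both reduce to the projection formula for the proper map $\bar\pi^s$ combined with the CMOP vanishing of $\lambda_g$ on $\overline{\X_g^s}\setminus \X_g^s$, which also ensures that none of the pairings depend on the auxiliary choice of lift.
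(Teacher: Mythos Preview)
The proposal is correct and follows essentially the same approach as the paper: test the identity against classes $(\pi^s)^*\delta\cdot\beta$ using the tensor decomposition of Theorem~\ref{gorgor}(ii), push forward along $\bar\pi^s$, use the hypothesis \eqref{cond} and the structure of $\R^*_{\mathsf{vert}}$ to reduce to the complementary degree $m+\deg\beta=gs$, and identify the resulting equality with the defining property of $\taut$ on $\A_g$, handling boundary issues via the vanishing of $\lambda_g$ on $\overline{\X}_g^s\setminus\X_g^s$. The only cosmetic difference is that for the right-hand side vanishing in non-complementary degree you invoke dimension reasons and $\R^{>gs}_{\mathsf{vert}}=0$, whereas the paper phrases this via Theorem~\ref{gorgor}(i); both are valid.
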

\noindent Here, to make sense of the right hand side, we use the canonical identification $\R^*(X_{\textnormal{gen}}^s)\cong \R_{\textnormal{vert}}^*(\X_g^s)$ established in Theorem \ref{gorgor}.
  
\begin{proof}
    The result follows from the definitions via standard arguments, but we provide the details for completeness. It suffices to verify that for all classes $\delta\in \R^*(\X_g^s)$, we have 
    $$
    \int_{\overline \X_g^s} \overline{j_*\alpha} \cdot \bar{\delta} \cdot (\bar \pi^s)^*\lambda_g=\int_{\overline \X_g^s} {\left(\bar{\taut_{X_{\textnormal{gen}}}^s(\alpha_{\textnormal{gen}})}\cdot \left(\bar\pi^s\right)^*\taut(\iota_*[S]) \right)\cdot \bar {\delta} \cdot (\bar \pi^s)^*\lambda_g}\,.
    $$
    The bars indicate extensions to the compactification $\bar \X_g^s$. The tautological classes pulled back from $\A_g$ admit canonical extensions to $\overline{\A}_g$ by \cite{FC}, which we assume throughout (and do not indicate explicitly in the notation). With the aid of Theorem \ref{gorgor}, we write
    $$
    \delta=\beta\cdot (\pi^s)^*\gamma, \quad \beta\in \R^k_{\textnormal{vert}}(\X_g^s), \quad \gamma\in \R^*(\A_g)\,.
    $$
    Therefore, we need to show
    $$
    \int_{\overline \X_g^s} \overline{j_*\alpha} \cdot \bar{\beta} \cdot (\bar \pi^s)^*(\gamma \cdot \lambda_g)=\int_{\overline \X_g^s}{\left(\bar{\taut_{X_{\textnormal{gen}}}^s(\alpha_{\textnormal{gen}})}\cdot \left(\bar \pi^s\right)^*\taut(\iota_*[S])\right)\cdot \bar {\beta} \cdot (\bar \pi^s)^*(\gamma \cdot \lambda_g})\,.
    $$
    We evaluate both sides by pushing forward under $\overline \pi^s:\bar \X_g^s\to \overline \A_g$. We need to establish
    \begin{equation}\label{formula}\int_{\bar \A_g}
    \overline \pi^s_*\left(\overline{j_*\alpha} \cdot \bar \beta\right) \cdot \gamma\cdot \lambda_g= \int_{\bar \A_g} \overline \pi^s_*\left(\bar{\taut_{X_{\textnormal{gen}}}^s(\alpha_{\textnormal{gen}})}\cdot\bar \beta \right)\cdot \mathsf{taut}(\iota_*[S])\cdot \gamma \cdot \lambda_g\,.
    \end{equation} 

    We first claim that both sides of \eqref{formula} vanish if $k+m\neq gs$. Indeed, for the left side, the restriction of the class  $\overline \pi^s_*\left(\overline{j_*\alpha} \cdot \bar \beta\right)$ to $\A_g$ equals $$\pi^s_*(j_*\alpha\cdot \beta)=\pi^s_*\,j_*(\alpha\cdot j^*\beta)=\iota_* \,f^s_*(\alpha\cdot j^*\beta)=0\,,$$ by the hypothesis of the Lemma. Therefore, the class $\overline \pi^s_*\left(\overline{j_*\alpha} \cdot \bar \beta\right)$ is supported on the boundary $\bar \A_g\smallsetminus \A_g$. Since $\lambda_g$ annihilates the boundary terms, the left side of \eqref{formula} vanishes. The same argument applies to the right side of \eqref{formula}, but for different reasons. The corresponding vanishing $$\pi^s_*(\taut_{X_{\textnormal{gen}}}(\alpha)\cdot \beta)=0$$ follows by Theorem \ref{gorgor} (i), see also  \eqref{ppush} below.

    We next consider the case $m+k=gs$. The restriction of the class $\overline \pi^s_*\left(\overline{j_*\alpha} \cdot \bar \beta\right)$ to $\A_g$ equals 
    $$\iota_* \,f^s_*(\alpha\cdot j^*\beta)=\mathsf{const}\cdot \iota_* ([S])\,,$$ where the last equality follows for dimension reasons. The constant in the above equation can be computed on the generic fiber of $f^s$.  Moreover, $\overline \pi^s_*\left(\bar{\taut_{X_{\textnormal{gen}}}^s(\alpha_{\textnormal{gen}})}\cdot\bar \beta\right)$ restricts on $\A_g$ to $\mathsf{const}\cdot [\A_g]$, for the same constant (as a consequence of the definition of the tautological projection $\mathsf{taut}_{X_{\textnormal{gen}}}$ on the generic fiber).

    Putting everything together, equation \eqref{formula} becomes $$\int_{\bar \A_g} \overline{\iota_*([S])} \cdot \gamma\cdot \lambda_g= \int_{\bar \A_g} \mathsf{taut}(\iota_*[S])\cdot \gamma \cdot \lambda_g\,, $$ 
    which holds by the definition of the projection operator $\taut$ on $\A_g$.
\end{proof}
    
\begin{rem}\label{lemmaistrue}
    The assumption in Lemma \ref{lem:integraction_projections} places a restriction on the class $\alpha$. However, Lemma 
    \ref{lem:integraction_projections}
    applies to classes satisfying a natural condition explained here.
    
    For any abelian scheme $p: Y\to S$ of fiber dimension $r$, a class $\alpha\in \CH^*(Y)$ has {\em weight} $w$ if the multiplication maps $n:Y\to Y$ act as $$n^*\alpha = n^{w}\alpha\,.$$ If $w\neq 2r$, then 
     \begin{equation}\label{ppush}p_*\left(\alpha\right)=0\in \CH^*(S)\,\end{equation} 
     as a consequence of the calculation
     $$p_*\left(\alpha\right)=\frac{1}{n^{2r}} p_*(n_*(n^*\alpha))=n^{w-2r} p_*n_*(\alpha) = n^{w-2r} p_*(\alpha)\, .$$ 
     By a fundamental result of Beauville \cite {Bea}, all classes $\alpha\in \CH^*(Y)$ admit a decomposition 
    \begin{equation}\label{bdec}
    \alpha = \alpha_1 + \ldots + \alpha_t\, ,
    \end{equation} where $\alpha_i$ has weight $i$. 
    
    We apply the weight decomposition to the abelian scheme $f^s:X^s\to S$ considered in Lemma \ref{lem:integraction_projections}.
    The classes $\theta_i$ and $\eta_{ij}$ on $\X_g^s$ have weight $2$, so every vertical tautological class $\beta\in \R^k_{\textnormal{vert}}(\X_g^s)$ has weight $2k$.
    If $\alpha\in \CH^m(X^s)$ has a decomposition \eqref{bdec} satisfying \begin{equation} \label{ncondit}\alpha_i=0 \text{ for all } i\leq 2m-1\, ,\end{equation}
    then the vanishing \eqref{cond} holds because of the vanishing \eqref{ppush}.

    In particular, the vanishing \eqref{cond} holds when $\alpha$ is the fundamental class of an abelian subscheme of $X^s$.

 To see an example of a class that does not satisfy \eqref{cond}, let 
 $$\mathcal C_{g,1} \rightarrow \mathcal M_{g,1}=S$$ 
 be the universal curve. Let $\X\rightarrow \M_{g,1}$ be the universal Jacobian, and let
 $\alpha \in \CH^{g-1}(\X)$ be the fundamental class of the universal Abel-Jacobi map $ \mathcal C_{g,1} \rightarrow \X$ defined
 via the marking. A straightforward calculation shows 
 $$f_*(\alpha \cdot \theta^2) = (2g-2)\psi + \kappa_1 \neq 0\, $$ 
 whenever $g \geq 2$. 
\end{rem}

\subsection{The homomorphism property for $\CH^*(\X_g^s)$}
The homomorphism property for the universal fiber product $\X_g^s$ is formulated by exactly following Definition \ref{d3333} for $\A_g$.
\begin{definition}
    Let  $\alpha$, $\beta \in \CH^*(\X_g^s)$. The pair $(\alpha, \beta)$ satisfies the \emph{homomorphism property} if
    \begin{equation} \label{eq:sfold_hom_prop}
    \taut^{s}(\alpha \cdot \beta) = \taut^s(\alpha) \cdot \taut^s(\beta) \in \R^*(\X_g^s)\,. \qedhere
    \end{equation}
\end{definition}
Just as for $\A_g$, the homomorphism property holds if either $\alpha$ or $\beta$ are tautological. Hence \eqref{eq:sfold_hom_prop} provides an obstruction to classes being tautological. 
If Conjecture \ref{conj:Hom} is true, then the homomorphism property \eqref{eq:sfold_hom_prop} holds, by the
projection formula, for any pair $(\alpha, \beta)$ with $\alpha \in \CH^*(\X_g^s)$ and $\beta$ pulled back from $\CH^*(\A_g)$.  
We will further investigate the homomorphism property on $\X_g^s$ in  Corollary \ref{ajppp} of Section \ref {KerCon}.

\subsection{Proof of Theorem \ref{thm:proj_product}}
\label{pr13}
We compute the tautological projection of the generalized product locus of Definition \ref{prlocus}:
$$    \PR_{g,s} = \underbrace{(\A_1 \times \X_{g-1}) \times_{\A_g} \cdots \times_{\A_g}  (\A_1 \times \X_{g-1})}_{s}
    \rightarrow \X_g^s\, .
    $$
The calculation relies on Lemma \ref{lem:integraction_projections}. More precisely, consider the family of abelian varieties $$f: \mathcal X_1\times \mathcal X_{g-1} \to \mathcal A_1\times \A_{g-1}$$ induced by the product morphism $\iota: S = \mathcal A_1 \times \A_{g-1} \to \A_g$. The cycle $$\alpha = [\PR_{g, s}]\in \CH^s((\X_1\times \X_{g-1})\times_{\A_1\times \A_{g-1}}\cdots \times _{\A_1\times \A_{g-1}}(\X_1\times \X_{g-1}))$$ satisfies the vanishing condition \eqref{cond} by Remark \ref{lemmaistrue}. As a result, we have
\begin{eqnarray*}
    \taut^s (   [\PR_{g, s}]  ) & = & \taut_{X_{\textnormal{gen}}}^s(\alpha_{\textnormal{gen}})\cdot \left(\pi^s\right)^*\taut(\iota_*[S]) \\
        & = & \taut_{X_{\textnormal{gen}}}^s(\alpha_{\textnormal{gen}})\cdot  \frac{g}{6|B_{2g}|} \lambda_{g-1}
\end{eqnarray*}   
in $\R^*(\X_g^s)$. Theorem \ref{thm:proj_product} is then a consequence of the following calculation.

\begin{prop}\label{prop:vertical_projection_product}
    Let $E$ and $Y$ be principally polarized abelian varieties of dimensions 1 and $g-1$ respectively. Let $X = E \times Y$ with the induced polarization. Then,
    $$
    \taut_{X}^s\left([(0 \times Y)^s]\right)=\frac{1}{\kappa_{g,s}} \det\begin{pmatrix}
        \theta_1 & \eta_{12}/2 & \ldots & \eta_{1s}/2\\
        \eta_{12}/2 & \theta_2 & \ldots & \eta_{2s}/2\\
        \vdots & \vdots & \ddots & \vdots \\
        \eta_{1s}/2& \eta_{2s}/2  & \ldots & \theta_s
        \end{pmatrix}\, \in \R^*(X^s)\, ,
    $$
    where $\kappa_{g,s} = \prod_{k=0}^{s-1}\left(g+\frac{k}{2}\right)$.
\end{prop}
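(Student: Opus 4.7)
The strategy is to use the Gorenstein pairing of Proposition \ref{prop:univ family Gorenstein} to characterize $\taut^s_X([(0\times Y)^s])\in R^s(X^s)$: it is the unique tautological class $T$ of codimension $s$ such that $\int_{X^s}T\cdot\beta=\int_{X^s}[(0\times Y)^s]\cdot\beta$ for every $\beta\in R^{(g-1)s}(X^s)$. First I would set up the closed inclusion $\iota:Y^s\hookrightarrow X^s$, $(y_1,\ldots,y_s)\mapsto((0,y_1),\ldots,(0,y_s))$. Since the polarization on $X=E\times Y$ is the product polarization and the normalized Poincar\'e class splits as $\eta_X=\eta_E+\eta_Y$, the $E$-components pull back through a constant map to $0\in E$ and vanish, giving $\iota^*\theta_i=\theta_i^Y$ and $\iota^*\eta_{ij}=\eta_{ij}^Y$. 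By the projection formula, for any polynomial $P$ in the generators,
\[
\int_{X^s}[(0\times Y)^s]\cdot P(\theta,\eta)=\int_{Y^s}P(\theta^Y,\eta^Y).
\]

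Next I would introduce the universal tautological class attached to a formal symmetric $s\times s$ matrix $M=(m_{ij})$, namely $\eta(M)=\sum_i m_{ii}\theta_i+\sum_{i<j}m_{ij}\eta_{ij}$. By Lemma \ref{matrix},
\[
\int_{X^s}\frac{\eta(M)^{gs}}{(gs)!}=\det(M)^g \quad\text{and}\quad \int_{Y^s}\frac{\eta(M)^{(g-1)s}}{((g-1)s)!}=\det(M)^{g-1}.
\]
The multinomial expansion of $\eta(M)^{(g-1)s}$ exhibits every tautological monomial of that degree with a non-vanishing coefficient, so the classes $\eta(M)^{(g-1)s}/((g-1)s)!$ span $R^{(g-1)s}(X^s)$ as $M$ ranges. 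It is therefore enough to verify
\[
\int_{X^s}\det(A)\cdot \frac{\eta(M)^{(g-1)s}}{((g-1)s)!}=\kappa_{g,s}\,\det(M)^{g-1},
\]
where $A$ is the symmetric matrix appearing in the statement.

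The decisive observation is that the tautological generators act as differential operators with respect to $M$: multiplication by $\theta_i$ or $\eta_{ij}$ on $e^{\eta(M)}$ coincides with applying $\partial_{m_{ii}}$ or $\partial_{m_{ij}}$. Defining the symmetric matrix $D$ of operators with $D_{ii}=\partial_{m_{ii}}$ and $D_{ij}=\tfrac12\partial_{m_{ij}}$ for $i\neq j$, we find that $\det(A)\cdot e^{\eta(M)}=\det(D)\,e^{\eta(M)}$ termwise. Since $\det(D)$ commutes past the integral,
\[
\int_{X^s}\det(A)\cdot e^{\eta(M)}=\det(D)\det(M)^g.
\]

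The proof then reduces to the \emph{Cayley identity for symmetric matrices}:
\[
\det(D)\det(M)^g=\prod_{j=0}^{s-1}\Bigl(g+\tfrac{j}{2}\Bigr)\det(M)^{g-1}=\kappa_{g,s}\,\det(M)^{g-1}.
\]
This identity is the main technical obstacle; it can be proved by induction on $s$, by expanding $\det(D)$ along its first row and using the adjugate identity for symmetric matrices, or extracted from the classical capelli-type theory (cf.\ Caracciolo--Sokal--Sportiello). Once established, combining the displayed identities and extracting the degree $(g-1)s$ part of $e^{\eta(M)}$ yields the equality $\int_{X^s}(\det A/\kappa_{g,s})\cdot\beta=\int_{X^s}[(0\times Y)^s]\cdot\beta$ for $\beta$ in a spanning family of $R^{(g-1)s}(X^s)$, and the proposition follows from the Gorenstein characterization of $\taut^s_X$.
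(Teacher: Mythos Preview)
Your proof is correct and follows essentially the same approach as the paper's: both reduce the pairing against $[(0\times Y)^s]$ to integrals over $Y^s$, invoke Lemma \ref{matrix}, realize the determinantal class as a matrix of differential operators in the $m_{ij}$, and then appeal to the Capelli/Cayley identity $\det(D)\det(M)^g=\kappa_{g,s}\det(M)^{g-1}$ to finish. The only cosmetic difference is that you package the computation through the exponential $e^{\eta(M)}$ and argue spanning of the $\eta(M)^{(g-1)s}$, whereas the paper works monomial-by-monomial with $\eta^{\mathsf a}/\mathsf a!$; the content is identical.
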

\begin{proof}
    Let $m_{ij}$ be formal variables such that $m_{ij}=m_{ji}$ for $1\leq i, j\leq s$. Consider the differential operators
    $$
    \partial_{ij} = 2^{\delta_{ij}-1}\frac{\partial}{\partial m_{ij}}\,,
    $$
    where $\delta$ denotes the Kronecker delta. For $\mathsf p : \mathsf{Sym}^2(s) \to \mathbb Z_{\geq 0}$ of codimension $s=\sum_{I\in \textsf{Sym}^2(s)} \mathsf p(I)$, let $\alpha (\mathsf p)$ denote the coefficient of
    $$
    \prod_{I\in \mathsf{Sym}^2(s)} \left(\frac{\partial}{\partial m_I}\right)^{\mathsf p(I)}
    $$
    in the expansion of $\det(\partial_{ij} ).$

    Fix a function $\mathsf a: \mathsf{Sym}^2(s)\to \mathbb Z_{\geq 0}$. From Lemma \ref{matrix}, we obtain
    $$
    \int_{X^s} \frac{\eta^{\mathsf a}}{{\mathsf a}!}\cdot [\PR_{g,s}] = \int_{Y^s} \frac{\eta^{\mathsf a}}{{\mathsf a}!}=[m^{\mathsf a}] \det(M)^{g-1}\,,
    $$ for the matrix $M=(m_{ij}).$
    We need the following Capelli identity \cite{Capelli} which allows us to differentiate powers of determinants of symmetric matrices:
    $$
    \det(\partial_{ij})\det(M)^g = \kappa_{g,s}\det(M)^{g-1}\, .
    $$
    Therefore, 
    \begin{align*}
    \int_{X^s}\frac{\eta^{\mathsf a}}{{\mathsf a}!}\cdot [\PR_{g,s}] &= \frac{1}{\kappa_{g,s}}\, [m^{\mathsf a}] \det(\partial_{i,j})\det(M)^g \\
    &=\frac{1}{\kappa_{g,s}} \sum_{\mathsf p}\frac{(\mathsf a+\mathsf p)!}{{\mathsf a}!}\, \alpha(\mathsf p)\, \,[m^{\mathsf a + \mathsf p}]\det(M)^{g}\\
    &= \frac{1}{\kappa_{g,s}} \,\sum_{\mathsf p}\alpha(\mathsf p)\int_{X^s} \frac{\eta^{\mathsf a+\mathsf p}}{{\mathsf a}!}\\
    &= \frac{1}{\kappa_{g,s}}\,\int_{X^s}  \frac{\eta^{\mathsf a}}{\mathsf a!}\cdot \left(\sum_{\mathsf p}\alpha(\mathsf {p}) \cdot \eta^{\mathsf p}\right)\,.
    \end{align*} 
    From the definition of $\alpha(\mathsf{p})$, we see $$\sum_{\mathsf p}\alpha(\mathsf p) \cdot \eta^{\mathsf p}=\det\begin{pmatrix}
        \theta_1 & \eta_{12}/2 & \ldots & \eta_{1s}/2\\
        \eta_{12}/2 & \theta_2 & \ldots & \eta_{2s}/2\\
        \vdots & \vdots & \ddots & \vdots \\
        \eta_{1s}/2& \eta_{2s}/2  & \ldots & \theta_s
        \end{pmatrix}\,.
    $$
    The Proposition then follows from the definition of tautological projection. 
\end{proof}

\begin{rem}
    Let $X = Z \times Y$ be a product of principally polarized abelian varieties  with $Z$ of dimension $r$ and $Y$ of dimension $g-r$. The argument used in the proof of Proposition \ref{prop:vertical_projection_product} also yields:
    $$
    \taut_X^s\left([(0 \times Y)^s]\right) =\frac{1}{\kappa_{g,s} \cdots \kappa_{g-r+1,s}}\det\begin{pmatrix}
    \theta_1 & \eta_{12}/2 & \ldots & \eta_{1s}/2\\
    \eta_{12}/2 & \theta_2 & \ldots & \eta_{2s}/2\\
    \vdots & \vdots & \ddots & \vdots \\
    \eta_{1s}/2& \eta_{2s}/2  & \ldots & \theta_s
    \end{pmatrix}^r \in \R^*(X^s)\, .
    $$
    Therefore, by Lemma \ref{lem:integraction_projections}, the tautological projection of the cycle $$\underbrace{(\A_r\times \X_{g-r})\times_{\A_g}\cdots \times_{\A_g} (\A_r\times \X_{g-r})}_{s}\to \X_g^s$$ is given by
    $$
    \frac{1}{\kappa_{g,s} \cdots \kappa_{g-r+1,s}}\det\begin{pmatrix}
    \theta_1 & \eta_{12}/2 & \ldots & \eta_{1s}/2\\
    \eta_{12}/2 & \theta_2 & \ldots & \eta_{2s}/2\\
    \vdots & \vdots & \ddots & \vdots \\
    \eta_{1s}/2& \eta_{2s}/2  & \ldots & \theta_s
    \end{pmatrix}^r\cdot \taut([\A_r\times \A_{g-r}])\, \in \R^*(\X^s_g)\, ,
    $$
    where $\taut ([\A_r\times \A_{g-r}])\in \R^*(\A_g)$ was computed in \cite [Theorem 1]{CMOP}. 
\end{rem}

\begin{rem}\label{r49}
    The class of the generalized product locus $[\PR_{g,s}]\in \CH^*(\X_g^s)$ will rarely be tautological. Indeed, we have $$\pi_*^s((\theta_1 \cdots \theta_s)^{s(g-1)}\cdot [\PR_{g,s}] ) = ((g-1)!)^s\,  [\A_1 \times \A_{g-1}]\,,$$ which is 
    non-tautological{\footnote{The class is expected to be non-tautological for all $g \geq 8$.}} for $g=6$, $g=12$, and every even $g\geq 16$ \cite {COP, Iribar}. The classes $[\PR_{5,1}]$ and $[\PR_{4, 2}]$ are also not tautological by Corollary \ref{cor:nont}. \end{rem}
    
\section{Generalized products and Abel-Jacobi maps}\label{ajmaps}

\subsection{Overview} We consider the pullbacks of the generalized product cycles under Abel-Jacobi maps. Using the methods of Sections \ref{exccalc} and \ref{wallcr}, we give two proofs that these pullbacks are tautological. We also discuss the relationship between the Abel-Jacobi pullbacks and the kernels of the $\lambda_g$-pairing on $\R^*(\M_{g, n}^{\ct})$. 
\subsection {Abel-Jacobi maps} \label{amap} Let $\mathsf{A} = (a_{ij}\in \mathbb{Z})$ be an $s \times n$ matrix
 for which  the sum of the entries of every row of $\mathsf{A}$ is $0$. The matrix $\mathsf{A}$ determines a proper morphism
\begin{equation}\label{eq:abel_jacobi_maps}
    \aj_{\mathsf{A}} : \Mct_{g,n} \to \X_g^s
    \end{equation}
    which is defined
    on $(C,p_1,\ldots,p_n)\in \M_{g,n}$ by
    $$\aj_{\mathsf{A}}(C, p_1, \ldots , p_n) \mapsto \left(\mathsf{Jac}(C),\mathcal O_C\left(\sum_{j=1}^n a_{1j}p_j \right), \ldots, \mathcal O_C\left(\sum_{j=1}^n a_{sj}p_j\right)\right)\,.$$
The unique extension of $\aj_{\mathsf{A}}$ over
$\M_{g,n}^{\ct}$ involves canonical twists (and lies at the beginning of the
theory of the double ramification cycle \cite{GZ2,Hai,JPPZ,P1}). The twists are required in order for the  corresponding line bundles to have degree zero on every component of the curve. 

Two cases are of particular geometric interest. For the $s\times (s+1)$ matrix 
\[
\mathsf{Z}=
\begin{pmatrix}
-1 & 1 & 0 & \cdots & 0 \\
-1 & 0 & 1 & \cdots & 0 \\
\vdots & \vdots & \vdots & \ddots & \vdots \\
-1 & 0 & 0 & \cdots & 1
\end{pmatrix}\,
\]
the induced morphism  
\begin{equation}\label{eq:abel_jacobi_classical}
    \aj_{\mathsf{Z}}: \M_{g, s+1}^{\ct} \to \X_g^s, \quad (C, p_0, p_1, \ldots , p_{s})\mapsto(\Jac(C), \mathcal O(p_1-p_0), \ldots, \mathcal O(p_{s} - p_{0}))
\end{equation}
is the $s$-fold product of the classical Abel-Jacobi map constructed via the marking $p_0$. A second fundamental case corresponds to the $s\times 2s$ matrix 
\[
\mathsf{B}=
\begin{pmatrix}
1 & -1 & 0 & 0 & \cdots & 0 & 0 \\
0 & 0 & 1 & -1 & \cdots & 0 & 0 \\
\vdots & \vdots & \vdots & \vdots & \ddots & \vdots & \vdots \\
0 & 0 & 0 & 0 & \cdots & 1 & -1
\end{pmatrix}\,,
\]
yielding the morphism
\begin{equation}\label{eq:AJ_even}
    \aj_{\mathsf{B}} : \M_{g, 2s}^{\ct} \to \X_g^s, \quad (C, p_1, p_2, \ldots , p_{2s-1}, p_{2s})\mapsto(\Jac(C), \mathcal O_C(p_1-p_2), \ldots, \mathcal O_C(p_{2s-1} - p_{2s}))\, .
\end{equation} 

The Abel-Jacobi map \eqref{eq:AJ_even} arises geometrically as follows. If $\Gamma$ is the stable graph with one vertex of genus $g$ and $s$ loops, then the composition
$$
\Mct_{g,2s} \stackrel{\xi_\Gamma}{\longrightarrow} \Mbar_{g+s} \stackrel{{\Tor\ }}{\longrightarrow}\Abar_{g+s}
$$
can be factorized as
$$
\Mct_{g,2s} \rightarrow \mathcal T\rightarrow \Abar_{g+s}\, ,
$$
where $\Abar_{g+s}$ is a toroidal compactification for which $\Tor$ extends and $\mathcal T$ is a torus bundle over $\X_{g}^s$. The composite $\Mct_{g,2s} \rightarrow\mathcal T \rightarrow\X_{g}^s$ is $\aj_{\mathsf B}$. A proof of the above factorization for $s=1$ is given in \cite[Lemma 5.1]{AitorEulerchar}, but the argument applies to  any $s$.

The pullback along an Abel-Jacobi map $\aj_{\mathsf A}$ preserves tautological classes \cite {GZ2, Hai}. For a vector $v = (v_1, \ldots , v_n) \in \mathbb Z^n$, let 
$$
\theta(v) = \frac{1}{2}\sum_{i=1}^n v_i^2\psi_i - \frac{1}{4}\sum_{h=0}^g \sum_{S}v_S^2 \,\delta_{h, S} \,\in \, \R^1(\Mct_{g,n})\, ,
$$
where the last sum is over all subsets $S\subset \{1, \ldots, n\}$. In the summand, $v_S = \sum_{i \in S} v_i$, and $$\delta_{h,\, S}\in \R^1(\M_{g,n}^{\ct})$$ is the class of the boundary divisor parameterizing curves with a genus $h$ component which carries exactly the markings of $S$. \footnote{Both $\delta_{h, S}$ and $\delta_{g-h, S^c}$ are allowed in the sum.} Then, we have 
\begin{align}\label{thetapull}
    \aj_{\mathsf{A}}^* \theta_i &= \theta(a_{i1}, \ldots , a_{in})\,, \\ \nonumber
    \aj_{\mathsf{A}}^* \eta_{i j} &= \theta(a_{i1} + a_{j1}, \ldots , a_{in} + a_{jn})-\theta(a_{i1}, \ldots , a_{in})-\theta(a_{j1}, \ldots , a_{jn})\,.
\end{align}

\subsection{The Abel-Jacobi pullback of \texorpdfstring{$\PR_{g,s}$}{PRgs}}\label{s6363} The Abel-Jacobi pullbacks of the generalized product cycles, $$\aj_{{\mathsf A}}^* \,[\PR_{g,s}]\in \CH^*(\M_{g, n}^{\ct})\, ,$$ can be computed via the methods of either Section \ref{exccalc} or Section \ref{wallcr}. These calculations show
$$
\aj_{{\mathsf A}}^*\, [\PR_{g, s}]\in \R^*(\Mct_{g,n})\,,
$$
see Theorem \ref {thm:aj^*PRis taut} below.  More generally, we can consider any class of the form $\mathsf{p}_*([\alpha \times \X_{g-1}^s])$, where $\alpha \in \R^*(\X_1^{s})$ and $$\mathsf{p}:(\X_1\times \X_{g-1})^s=\underbrace{(\X_1 \times \X_{g-1}) \times_{\A_g} \cdots \times_{\A_g}  (\X_1 \times \X_{g-1})}_{s}\rightarrow \X_g^s\,
$$
is the product map. The fibered product over $\A_g$ will be understood in the notation throughout Section \ref{ajmaps}. As a special case,
$$
[\PR_{g,s}]=\mathsf{p}_*([\alpha \times \X_{g-1}^s])\in \R^*(\X_g^s) \text {\ \ for }\alpha=\theta_1\cdots \theta_s\in \R^s(\X_1^s)\,.
$$ 

The Abel-Jacobi pullbacks can be expressed in terms of families Gromov-Witten classes for the universal elliptic curve $\pi : \mathcal E \to \mathcal M_{1,1}.$ To explain the formula, we require the following notation. 
\begin{itemize} 
\item For $n\geq 1$, let $\M_{g,n}^{\ct}(\pi ,1)$ be the space of stable maps to the universal family $\pi$ with compact type domain curves. Stability of the map implies stability of the $n$-pointed domain curve  because there are no nonconstant morphisms $\mathbb P^1\to E$ to any  elliptic curve $E$. 
\item Let $\tau$
denote the forgetful map  $$\tau:\M_{g,n}^{\ct}(\pi ,1)\to \M_{g, n}^{\ct}\, .$$ No stabilization is needed to define $\tau$. The dependence on $n$ is dropped in the notation.
\item There are $n$ evaluation maps 
$$
\operatorname{ev}_i : \M_{g,n}^{\ct}(\pi ,1) \longrightarrow \mathcal E\, ,
$$
for $i = 1, \ldots , n$, which can be combined as follows. For any $s\times n$ matrix ${\mathsf A}$ as above, we obtain a morphism 
$$
\operatorname{ev}_{\mathsf A} : \M_{g,n}^{\ct}(\pi ,1) \longrightarrow \underbrace{\mathcal E\times_{\M_{1,1}}\times {\cdots} \times_{\M_{1,1}}\mathcal E}_{s}\cong \X_1^s
$$
given by \begin{equation}\label{evaa}\ev_{\mathsf A} (f: C\to E, p_1, \ldots , p_n) = \left(E, \sum_{j=1}^{n} a_{1j}f(p_j), \ldots , \sum_{j=1}^{n} a_{sj}f(p_j)\right)\,.\end{equation}
\item For $n\geq 1$,  let  $\mathcal Z_n=\M_{g,n}^{\ct, 0}(\pi ,1)$ be the substack corresponding to maps  satisfying $f(p_1) = 0$. 
\item The moduli space $\M_{g,n}^{\ct}(\pi ,1)$ carries a virtual fundamental class of dimension $2g-1+n$ from Gromov-Witten theory. Using the diagram 
\[
\begin{tikzcd}
\M_{g,n}^{\ct,0}(\pi,1)
  \arrow[r]
  \arrow[d]
&
\M_{g,n}^{\ct}(\pi,1)
  \arrow[d, "\ev_1"]
\\
\M_{1,1}
  \arrow[r, "\mathsf{z}"']
&
\ \mathcal{E}\,,
\end{tikzcd}
\] 
we endow the space $\M_{g,n}^{\ct, 0}(\pi ,1)$ with a virtual fundamental class of dimension $2g-2+n$ given by the Gysin pullback \begin{equation} \label{pttv} \left[\M_{g,n}^{\ct, 0}(\pi ,1)\right]^\vir={\mathsf{z}}^! \left[\M_{g,n}^{\ct}(\pi ,1) \right]^{\vir},\end{equation} where $\mathsf{z}$ denotes the zero section of the family $\pi$. 
\end {itemize}

The classes \eqref{pttv} are compatible with the forgetful morphisms $q:\M_{g, n}^{\ct}\to \M_{g, n-1}^{\ct},$ for $n\geq 2$. More precisely a formal computation using the definitions and commutativity of Gysin pullbacks shows \begin{equation}\label{comp}q^!\left[\M_{g,n-1}^{\ct, 0}(\pi ,1)\right]^\vir=\left[\M_{g,n}^{\ct, 0}(\pi ,1)\right]^\vir\in \CH_{2g-2+n}(\M_{g,n}^{\ct, 0}(\pi ,1))\,. \end{equation}

Consider the Torelli map $\Tor_n : \Mct_{g,n} \longrightarrow \A_g$ for $n\geq 1$. We have the following diagram
\[
\begin{tikzcd}
\M_{g,n}^{\ct,0}(\pi,1)\ar[rd]\ar[dd]\ar[rr] & &
\A_1\times \A_{g-1}
  \arrow[dd]
\\
& \M_{g,1}^{\ct,0}(\pi,1)
  \arrow[ru]
  \arrow[dd]
& \\
\M_{g, n}^{\ct}\arrow[rd, "q"] \arrow[rr, "\quad \quad \quad \Tor_n"] & &
\A_g\\
&\M_{g, 1}^{\ct}
  \arrow[ru, "\Tor_1"']
&
\end{tikzcd}
\]
where $q: \M_{g, n}^{\ct} \to \M_{g, 1}^{\ct}$ denotes the forgetful morphism. The left front cell is easily seen to be Cartesian. The right front cell is also Cartesian by  \cite [Proposition 21]{COP}. Furthermore, $$\left[\M_{g,1}^{\ct}(\pi ,1)\right]^\vir=\Tor_1^!([\A_1\times \A_{g-1}])\in \CH_{2g-1}(\M^{\ct,0}_{g,1}(\pi, 1))\, ,$$
by \cite{GreerLian}.
A similar identity holds for all $n\geq 1$: \begin{equation}\label{compp}\Tor_n^!([\A_1\times \A_{g-1}])=q^! \Tor_1^!([\A_1\times \A_{g-1}])= q^!\left[\M_{g,1}^{\ct, 0}(\pi ,1)\right]^\vir = \left[\M_{g,n}^{\ct,0}(\pi ,1) \right]^{\vir} \,,\end{equation} where \eqref{comp} was used for the last equality. 

\begin{lem}\label{lem:generalizedNLGW}
    The Abel-Jacobi pullbacks are related to Gromov-Witten theory by: $$\aj_{\mathsf{A}}^*\left(\mathsf{p}_*([\alpha \times \X_{g-1}^s])\right)=
   \tau_* \left(\operatorname{ev}_{\mathsf{A}}^*(\alpha) \cap [\M^{\ct,0}_{g,n}(\pi ,1)]^{\vir}\right)\, \in \CH^{g-1+s}(\M_{g,n}^{\ct})\, .
    $$
\end{lem}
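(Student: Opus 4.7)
The plan is to interpret the Abel-Jacobi pullback as a base change of the Torelli pullback and then apply the identity \eqref{compp} to identify the resulting refined intersection with the virtual fundamental class. The starting point is that $\mathsf{p}$ is obtained from the closed immersion $\iota\colon \A_1 \times \A_{g-1} \hookrightarrow \A_g$ by base change along $\X_g^s \to \A_g$, so the fiber product
\[
\mathcal{Z}_n \;:=\; \M^{\ct}_{g,n} \times_{\X_g^s} (\X_1 \times \X_{g-1})^s \;\cong\; \M^{\ct}_{g,n} \times_{\A_g} (\A_1 \times \A_{g-1})\, .
\]
Extending \cite[Proposition 21]{COP} to the $n$-marked setting---using the basepoint $p_1$ to rigidify translations of the elliptic target---this fiber product is canonically isomorphic to $\M^{\ct,0}_{g,n}(\pi,1)$. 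Under this identification, the natural projection $\mathcal{Z}_n \to (\X_1 \times \X_{g-1})^s \to \X_1^s$ coincides with the evaluation morphism $\ev_{\mathsf{A}}$ of \eqref{evaa}, since the $\X_1^s$-component of $\mathcal{O}_C(\sum_j a_{ij}p_j)$ under the decomposition $\Jac(C)=E\times Y$ is $\sum_j a_{ij} f(p_j) \in E$, where $f\colon C\to E$ is the map to the elliptic factor.

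Next, I would apply the base change formula for the refined Gysin pullback \cite[Theorem 6.2]{Fulton}: since $\mathsf{p}$ is proper,
\[
\aj_{\mathsf{A}}^*\mathsf{p}_*\beta \;=\; \tau_*\bigl(\aj_{\mathsf{A}}^!\beta\bigr)
\]
for any $\beta \in \CH^*((\X_1\times \X_{g-1})^s)$. Taking $\beta = \alpha \times [\X_{g-1}^s] = \operatorname{pr}^*\alpha$ for the flat projection $\operatorname{pr}\colon (\X_1\times \X_{g-1})^s \to \X_1^s$, and combining with the compatibility of refined pullback with flat pullback, yields
\[
\aj_{\mathsf{A}}^*\bigl(\mathsf{p}_*(\operatorname{pr}^*\alpha)\bigr) \;=\; \tau_*\Bigl(\ev_{\mathsf{A}}^*(\alpha) \cdot \aj_{\mathsf{A}}^!\bigl[(\X_1\times \X_{g-1})^s\bigr]\Bigr)\, .
\]
Since the defining square for $\mathcal{Z}_n$ arises from the Torelli square by base change along $\X_g^s \to \A_g$, functoriality of refined Gysin pullbacks gives $\aj_{\mathsf{A}}^!\bigl[(\X_1\times \X_{g-1})^s\bigr] = \Tor_n^![\A_1\times \A_{g-1}]$, which by \eqref{compp} equals $[\M^{\ct,0}_{g,n}(\pi,1)]^{\vir}$.

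The main obstacle I anticipate is the first step: carefully extending the isomorphism \eqref{eq:isom_stablemaps} to the $n$-marked setting, and verifying that the induced projection to $\X_1^s$ agrees with $\ev_{\mathsf{A}}$. The subtlety comes from tracking how translations of the target elliptic curve are rigidified by $p_1$ and how this rigidification is compatible with the line bundle data used in defining $\aj_{\mathsf A}$. Once these moduli-theoretic identifications are in place, the remaining steps are formal consequences of the intersection theory of refined Gysin pullbacks on smooth Deligne-Mumford stacks.
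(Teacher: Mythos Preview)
Your proposal is correct and follows essentially the same approach as the paper: set up the fiber diagram, apply Fulton's base-change formula \cite[Theorem 6.2(a), Proposition 6.4]{Fulton} to write $\aj_{\mathsf A}^*\mathsf{p}_*(\operatorname{pr}^*\alpha)=\tau_*\bigl(\ev_{\mathsf A}^*\alpha\cap \aj_{\mathsf A}^![(\X_1\times\X_{g-1})^s]\bigr)$, identify $\aj_{\mathsf A}^![(\X_1\times\X_{g-1})^s]=\Tor_n^![\A_1\times\A_{g-1}]$ by functoriality, and invoke \eqref{compp}. The ``main obstacle'' you flag---the moduli-theoretic identification of the fiber product with $\M_{g,n}^{\ct,0}(\pi,1)$ and of $\operatorname{pr}\circ j$ with $\ev_{\mathsf A}$---is handled by the paper in the setup \emph{before} the lemma (via the Cartesian squares leading to \eqref{compp} and the definition \eqref{evaa}), so the proof itself is reduced to the purely formal chain of equalities you describe.
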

\begin{proof}
    The proof is a formal consequence of the compatibility between the basic operations in intersection theory. Consider the following fiber diagram:
    \begin{equation}\label{di1}
        \begin{tikzcd}
    {\M_{g,n}^{\ct, 0}(\pi ,1)} \arrow[dd, "\tau"] \arrow[rd] \arrow[rr, "j"] &  & (\X_1 \times \X_{g-1})^s \arrow[r, "\text{pr}"] \arrow[ld] \arrow[dd, "\mathsf p"] & \X_1^s \\
    & \A_1 \times \A_{g-1} \arrow[dd] & & \\
    {\M_{g,n}^{\ct}} \arrow[rd, "\Tor_n"] \arrow[rr, "\quad\quad\quad\quad\quad\aj_{\mathsf A}"] &  & \X_g^s \arrow[ld, "\pi^s"] &  \\
    & \A_g &  &             
    \end{tikzcd}
    \end{equation} Using \cite [Theorem 6.2(a), Proposition 6.4]{Fulton}, we find 
    \begin{align*}
    \aj_{\mathsf A}^*\left(\mathsf{p}_*([\alpha \times \X_{g-1}^s])\right)&=\tau_*\left((\text{pr}\circ j)^*\alpha \cap\aj_{\mathsf A}^!([\X_1^s\times \X_{g-1}^{s}])\right)\\ &=\tau_*\left((\text{pr}\circ j)^*\alpha \cap\Tor_n^!([\A_1\times \A_{g-1}])\right)\\&=
    \tau_*\left((\text{pr}\circ j)^*\alpha \cap [\M_{g,n}^{\ct, 0}(\pi ,1)]^\vir\right) \\& =\tau_*\left(\ev_{\mathsf A}^*\alpha \cap [\M_{g,n}^{\ct, 0}(\pi ,1)]^\vir\right)\, .
    \end{align*} Here, we have used \eqref{compp} and  $\text{pr}\circ j=\ev_{\mathsf A}$.
\end{proof}

\begin{thm}\label{thm:aj^*PRis taut}
    For every $\alpha \in \R^*(\X_1^s)$, the Abel-Jacobi pullback $\aj_{\mathsf A}^*\left( \mathsf{p}_*\left([\alpha \times \X_{g-1}^s]\right)\right)$ belongs to the tautological ring of $\Mct_{g,n}$. 
\end{thm}
Theorem \ref{thm:ajaj} is a special case corresponding to the morphism \eqref{eq:AJ_even} and $\alpha=\theta_1\cdots \theta_s\in \R^s(\X_1^s)$.\vskip.1in

\noindent {\it First Proof of Theorem \ref{thm:aj^*PRis taut}.} We use the analysis of the Torelli pullback in Section \ref{exccalc}. Consider the fiber diagram 
\begin{equation}\label{di2}
\begin{tikzcd}
\mathcal Z_n \ar[rd]\ar[dd, "\tau"  yshift=-1.5ex]\ar[rr] & &
\A_1\times \A_{g-1}
  \arrow[dd]
\\
& \mathcal Z
  \arrow[ru]
  \arrow[dd, "\tau" yshift=-1.5ex]
& \\
\M_{g, n}^{\ct}\arrow[rd, "q"] \arrow[rr, "\quad \quad \quad \Tor_n"] & &
\A_g\\
&\M_{g}^{\ct}
  \arrow[ru, "\Tor"']
&
\end{tikzcd}
\end{equation}
 where $q$ is the forgetful map. 
By Theorem \ref{thm:contributions}, the results of \cite {COP} show  $$\Tor^! [\A_1\times \A_{g-1}]=\sum_{\mathsf T}\frac{1}{|\Aut (\mathsf T)|}\left(\iota_{\mathsf T}\right)_* \mathsf{Cont}_{\mathsf T}\,,$$ where $\iota_{\mathsf T}: \M_{\mathsf{T}}^{\ct} \to \mathcal Z$ are the strata of the fibered product $\mathcal Z$ indexed by extremal rooted trees with a genus $1$ root, and $$\mathsf{Cont}_{\mathsf T}\in \R^*(\M_{\mathsf T}^{\ct})\,.$$ Of course, $\tau\circ \iota_{\mathsf T}=\xi_{\mathsf T}$ is the boundary map in $\M_g^{\ct}$.

The strata of the fibered product $\mathcal Z_n$ are indexed by 
extremal rooted trees $\bar {\mathsf T}$ with $n$ leaves and a genus $1$ root, whose stabilization after forgetting the leaves is a rooted tree $\mathsf T$ as before. Then \begin{align}\label{torn}\Tor_n^![\A_1\times \A_{g-1}]&=q^! \Tor^! [\A_1\times \A_{g-1}]= \sum_{\mathsf T} \frac{1}{|\Aut(\mathsf T)|} q^! \left(\iota_{\mathsf T}\right)_* \mathsf{Cont}_{\mathsf T}\\&= \sum_{\bar{\mathsf T}} \frac{1}{|\Aut(\bar{\mathsf T})|} \left(\iota_{\bar {\mathsf T}}\right)_* \mathsf{Cont}_{\bar{\mathsf T}}\,,\nonumber \end{align} for tautological classes $\mathsf{Cont}_{\bar{\mathsf T}}\in \R^*(\M^{\ct}_{\bar {\mathsf T}})$ obtained by pulling back $\mathsf{Cont}_{\mathsf T}\in \R^*(\M^{\ct}_{\mathsf T})$ under the tautological maps. Here, $\iota_{\bar {\mathsf T}}:\M_{\bar {\mathsf T}}\to \mathcal Z_n$ is the natural inclusion, so that $$\tau\circ \iota_{\bar {\mathsf T}}=\xi_{{\bar {\mathsf T}}}$$ is the boundary map in $\M_{g, n}^{\ct}.$ \footnote{By \eqref{compp}, we see that for $n\geq 1$, the left hand side of \eqref{torn} equals 
$[\mathcal Z_n]^{\vir},$ under the identification $\mathcal Z_n\cong \M_{g, n}^{\ct, 0} (\pi, 1).$}

Fix a graph $\overline {\mathsf T}$, and  let $m$ be the valence of the root (the number of edges and leaves). The root corresponds to a genus $1$ curve with $m$ markings. The marking set $\{1, 2,\ldots, n\}$ is canonically partitioned into subsets $N_1, \ldots, N_m$, where $N_j$ is the singleton set corresponding to a leaf attached to the root or $N_j$ is the set of markings which lie on components attached to the root elliptic curve at a given node. 
Let ${\mathsf A}_{\bar {\mathsf T}}$ be the 
$s\times m$ matrix with entries 
\begin{equation}\label{eq:modified_matrix1}
\left(\mathsf A_{\bar{\mathsf{T}}}\right)_{ij} = \sum_{k \in N_j}a_{ik}\, .
\end{equation} Since $\mathsf A$ has vanishing row sums, $\mathsf {A}_{\mathsf {\bar T}}$ also has vanishing row sums. We construct the associated Abel-Jacobi map $\M_{1, m}^{\ct} \to \X_1^s.$ Composing with the natural projection $\M^{\ct}_{\overline{\mathsf T}} \to \M_{1, m}^{\ct}$, we obtain a morphism $$\aj_{\overline{\mathsf T}}:\M^{\ct}_{\overline{\mathsf T}} \to \X_1^s.$$

Finally, consider the diagram obtained by putting together \eqref{di1} and \eqref{di2}: 
\[
\begin{tikzcd}
& {\mathcal Z}_n
  \arrow[rr, "j"]
  \arrow[dd, "\tau" yshift=-2.5ex]
  \arrow[rrr, bend left=18, "\ev_{\mathsf A}"]
  \arrow[ld]
  \arrow[rd]& &
{(\X_1\times \X_{g-1})^s}
  \arrow[ld, "\pi^s"]
  \arrow[dd, "\mathsf {p}"]
  \arrow[r, "\textnormal{pr}"']
  & 
  \X_1^s\\  
\mathcal Z \arrow[rr]\arrow[dd]&&
{\A_1\times \A_{g-1}}
  \arrow[dd]
&&\\
&\M_{g, n}^{\ct}
  \arrow[rr, "\quad \quad \quad \aj_{\mathsf A}"]
  \arrow[rd, "\Tor_n"]
  \arrow[ld, "q"]
&&
\X_g^s
\arrow[ld, "\pi^s"]
&\\
\M_{g}^{\ct} 
\arrow [rr, "\Tor"]
&&
{\A_g}
&
&
\end{tikzcd}
\]
We intersect \eqref{torn} with $\ev_{\mathsf A}^*\alpha = j^* \text{pr}^*\alpha$. Using Lemma \ref{lem:generalizedNLGW}, equation \eqref{compp}, and the projection formula, we obtain \begin{align*}\aj_{\mathsf A}^* \left(\mathsf{p}_*\left([\alpha \times \X_{g-1}^s]\right)\right) &=\tau_*(\ev_{\mathsf A}^*\alpha \cap \Tor_n^!([\A_1\times \A_{g-1}]))=\sum_{\bar{\mathsf T}} \frac{1}{|\Aut(\bar {\mathsf T})|}\tau_*\left(\iota_{\bar{\mathsf {T}}}\right)_* \left(\iota_{\bar{\mathsf {T}}}^* \ev_{\mathsf A}^*(\alpha) \cap  \mathsf{Cont}_{\bar{\mathsf T}}\right)\\&=\sum_{\bar{\mathsf T}} \frac{1}{|\Aut(\bar {\mathsf T})|} \left(\xi_{\bar{\mathsf {T}}}\right)_* \left( \aj_{\bar {\mathsf T}}^*(\alpha) \cap  \mathsf{Cont}_{\bar{\mathsf T}}\right)\, .
\end{align*} The latter expression is tautological on $\M_{g, n}^{\ct}$ since 
the Abel-Jacobi pullbacks preserve tautological classes
by \eqref{thetapull}.
\vskip.1in

\vskip.1in
\noindent {\it Second Proof of Theorem \ref{thm:aj^*PRis taut}.} The starting point of the second proof is Lemma \ref{lem:generalizedNLGW}. The wall-crossing formula established in \cite{Nesterov} is also valid for the class \begin{equation}\label{tauclass}\tau_* \left(\operatorname{ev}_{\mathsf A}^*(\alpha)\cap [\M_{g,n}^{\ct,0}(\pi,1)]^{\vir}\right)\,
\end{equation}
with insertions.
The resulting expression is a sum over star-shaped graphs as in Section \ref{section:star_shaped}, where the root has genus $g_0=1$, and the partitions $\mu^i = (1)$ for all $i$. The difference is that the graphs are also allowed to carry leaves on non-root vertices, corresponding to the marked points $p_1, \ldots , p_{n}$. We denote the set of such graphs by $\mathsf{Star}(g,n)$. For each star graph $\mathsf S$ with $m$ non-root vertices, let
\[ 
N_j \subset \{1, \dots, n\}\, , \quad j \in \{1, \hdots, m\},
\]
be  the (possibly empty) set of leaves on the $i$-th vertex. Define the $s \times m$ matrix ${\mathsf A}_{\mathsf{S}}$ by
\begin{equation}\label{eq:modified_matrix}
\left({\mathsf A}_{\mathsf{S}}\right)_{ij} = \sum_{k \in N_j}a_{ik}\, .
\end{equation} We have an isomorphism 
$$\mathcal M_{1}^{\operatorname{ct, un}}(\pi_1, (1), \ldots , (1)) \cong \Mct_{1,m}\,.$$
Under the above identification, the evaluation morphism from the root component, 
\[ 
\operatorname{ev}_{\mathsf{S}} \colon \mathcal M_{1}^{\operatorname{ct, un}}(\pi_1, (1), \ldots , (1)) \longrightarrow \E^{s} = \X_1^s,
\]
is easily seen to be the Abel-Jacobi map $\M_{1, m}^{\ct}\to \X_1^s$ associated to the matrix ${\mathsf{A}_\mathsf S}$. We have $\ev_{\mathsf S}=\xi_{\mathsf S}\circ \ev_{\mathsf A}.$

The analogue of Theorem \ref{thm:wallcr} here takes the form
\begin{multline*}
\operatorname{ev}_{\mathsf{A}}^*(\alpha)\cap [\M_{g,n}^{\ct,0}(\pi,1)]^{\vir} \\
= \sum_{\mathsf{S} \in \mathsf{Star}(g,n)}\frac{1}{|\operatorname{Aut}({\mathsf S})|}(\xi_{\mathsf S})_*\left( \mathrm{ev}_{\mathsf{S}}^*(\alpha)\cap [\mathcal M_{1}^{\operatorname{ct, un}}(\pi_1, (1), \ldots , (1))]^\vir \boxtimes \prod^m_{i=1} I_{g_i,N_i}(-\Psi_i)\right)\,,
\end{multline*}
where $[\mathcal M_{1}^{\operatorname{ct, un}}(\pi_1, (1), \ldots , (1))]^\vir$ equals the fundamental class $[\M_{1, m}^{\ct}]$, and $I_{g_i,N_i}(z)$ is the marked $I$-function, see \cite [Proposition 1]{DenisMax} for the exact expression in terms of tautological classes. As proven in Section \ref{amap}, the Abel-Jacobi pullbacks preserve tautological classes $$\operatorname{ev}_{\mathsf S}^*\alpha \in \R^*(\M_{1, m}^{\ct})\,.$$ Therefore, after pushing forward under $\tau:\M_{g,n}^{\ct, 0}(\pi ,1)\to \M^{\ct}_{g, n}$,  the class \eqref{tauclass} is tautological on $\M_{g, n}^{\ct}$. \qed
\vskip.1in

\subsection{The Gorenstein kernels and generalized products} \label{KerCon}
The $\lambda_g$-pairing on $\R^*(\M_{g, n}^{\ct})$ is 
$$
\R^*(\M_{g, n}^{\ct}) \times \R^{2g-3+n-*}(\M_{g, n}^{\ct}) \to  \mathbb Q\,,\, \quad (\alpha, \beta) \mapsto \int_{\Mbar_{g, n}}\overline{\alpha} \cdot \overline{\beta}\cdot \lambda_g\,.$$ 
Contrary to the situation on $\R^*(\A_g)$ and $\R^*(
\X_g^s)$, the $\lambda_g$-pairing is {\it not} in general perfect on $\R^*(\M_{g, n}^{\ct})$.
The first failure was found in $\R^5(\M_{2,8}^{\ct})$ by Petersen 
\cite{pet}, and  failures in
$\R^5(\M_{6}^{\ct})$
and 
$\R^5(\M^{\ct}_{5,2})$
were predicted by Pixton in \cite{PixtonPrincetonPhd}. By recent results of Canning-Larson-Schmitt \cite {CLS}, the $\lambda_g$-pairing fails to be perfect whenever
$2g+n \geq 12$ and $g\geq 2$.
The strategy of \cite{CLS}  is to establish the non-vanishing of the Gorenstein kernels in
\begin{equation}\label{eq:gorenstein_failures}
    \R^5(\Mct_{6}),\,\,\, \R^5(\Mct_{5,2}),\, \,\, \,\R^5(\Mct_{4,4}),\,\,\, \R^5(\Mct_{3,6}), \,\,\, \,\R^5(\Mct_{2,8})\, ,
\end{equation}
and then to propagate these  Gorenstein kernels to the other pairs $(g, n)$ using properties (i)-(v) stated in Section \ref{section:intro_GorensteinKernel}.
 
An explicit geometric class in the Gorenstein kernel was constructed in \cite {COP}: the non-zero class $$\Tor^*\Big([\A_1 \times \A_{5}]- \taut([\A_1 \times \A_5])\Big)\in \R^5(\M_6^{\ct})$$ generates $\mathsf{K}_6\subset \R^*(\M_6^{\ct})$. 

We would like to explain geometrically
the failure of the $\lambda_g$-pairing to be perfect for the remaining cases in \eqref {eq:gorenstein_failures} using generalized products $\PR_{g,s}$
and their Abel-Jacobi pullbacks. Consider the Abel-Jacobi map $\aj_{\mathsf B}$ defined by \eqref{eq:AJ_even}, and let $$\Delta_{g,s} = \aj_{\mathsf{B}}^*\Big([\PR_{g,s}]- \taut^s([\PR_{g,s}])\Big) \in \R^{g-1+s}(\Mct_{g,2s})\,.$$
Theorems \hyperref[thm:10']{10$'$}, \ref{thm:inkernel} and \ref{thm:k44} from Section \ref{gouf} are gathered into the following result.
\begin{thm}\label{prop:lies_in_kernel}
    The class $\Delta_{g,s}$ satisfies the following properties:
    \begin{itemize}
        \item[\emph{(i)}] $\Delta_{g,s}\in \mathsf{K}_{g,2s}$ for all $g$ and $s\in \{0,1\}$,
         \item[\emph{(ii)}] $\Delta_{g,s}\in \mathsf{K}_{g,2s}$ for all $g\leq 4$ and $s=2$,
        \item [\emph{(iii)}] $\Delta_{g,s}\neq 0$ for $(g,s) \in \{{(6,0)},(5,1),(4,2)\}\,.$
    \end{itemize}
\end{thm}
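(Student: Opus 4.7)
For part (i), my plan is to reduce to the defining property of $\taut^1$ on $\X_g$ using the projection formula. The $s=0$ case is the main result of \cite{COP}: the class $\Delta_{g,0} = \Tor^*\!\big([\A_1 \times \A_{g-1}] - \taut([\A_1 \times \A_{g-1}])\big)$ lies in $\K_{g,0}$. For $s=1$, fix $\delta \in \R^{g-1}(\M_{g,2}^{\ct})$. Since $\lambda_g$ on $\M_{g,2}^{\ct}$ is pulled back from $\A_g$ along $\pi \circ \aj_{\mathsf B} = \Tor \circ q$, we have $\aj_{\mathsf B}^*\lambda_g = \lambda_g$, and the projection formula gives
\[
\langle \Delta_{g,1}, \delta\rangle = \int_{\Mbar_{g,2}} \aj_{\mathsf B}^*\!\big([\PR_{g,1}] - \taut^1([\PR_{g,1}])\big)\cdot\delta\cdot\lambda_g = \int_{\overline{\X}_g} \big([\PR_{g,1}] - \taut^1([\PR_{g,1}])\big)\cdot (\aj_{\mathsf B})_*(\delta)\cdot\lambda_g.
\]
By the definition of $\taut^1$ as the orthogonal projection for the $\lambda_g$-pairing on $\R^*(\X_g)$, the right-hand side vanishes whenever $(\aj_{\mathsf B})_*(\delta) \in \R^*(\X_g)$.

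The hard step is therefore the auxiliary statement that $(\aj_{\mathsf B})_*$ preserves tautological classes. To establish it, I would decompose each tautological $\delta$ as a sum of boundary pushforwards $\xi_{\Gamma,*}(M_\Gamma)$, where $M_\Gamma$ is a monomial in $\psi$, $\kappa$, and $\lambda$ classes and $\Gamma$ ranges over stable trees; on each graph stratum the composition $\aj_{\mathsf B}\circ \xi_\Gamma$ can be analyzed using either the excess-intersection machinery of Section \ref{exccalc} or the wall-crossing analysis of Section \ref{wallcr}, both of which express the relevant Abel-Jacobi geometry in tautological terms and thus identify each pushed-forward summand as a tautological class on $\X_g$. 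Since $\lambda_g$ vanishes on the boundary of $\overline{\X}_g$, one can equivalently settle for the weaker assertion that $(\aj_{\mathsf B})_*(\delta)\cdot\lambda_g$ lies in $\R^*(\X_g)\cdot \lambda_g$, which is what is strictly needed in the integral above. This uniform tautological control across all boundary strata is the principal obstacle and the step that requires the most care.

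For part (ii) at $(g,s)=(6,0)$, the main result of \cite{COP} identifies $\Delta_{6,0}$ as the generator of the one-dimensional kernel $\K_{6,0}\subset\R^5(\M_6^{\ct})$, so in particular $\Delta_{6,0}\neq 0$. For $(g,s)=(5,1)$, the plan is to compute $\Delta_{5,1}$ in closed form. Theorem \ref{thm:proj_product} specializes to
\[
\taut^1([\PR_{5,1}]) \;=\; \frac{1}{6|B_{10}|}\,\theta_1\lambda_4,
\]
whose Abel-Jacobi pullback is immediate from \eqref{thetapull}; the pullback $\aj_{\mathsf B}^*[\PR_{5,1}]$ is obtained from the graph-sum expansion in the proof of Proposition \ref{prop:aj^*PRis taut}. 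Non-vanishing of the resulting class $\Delta_{5,1}\in\R^5(\M_{5,2}^{\ct})$ is then verified with \textsf{admcycles}. Since \cite{CLS} implies that $\K_{5,2}$ is one-dimensional, this nonzero class is necessarily a scalar multiple of its generator, which simultaneously proves Theorem~\hyperref[thm:11']{$11'$}; alternatively, one may appeal to the identification in \cite{aitordenisjeremy} of $\Delta_{5,1}$ with the torus-rank 1 class of Theorem \ref{trank1}.
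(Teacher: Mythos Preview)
Your approach to part (i) for $s=1$ has a genuine gap. The crucial claim that $(\aj_{\mathsf B})_*$ sends tautological classes on $\M_{g,2}^{\ct}$ to tautological classes on $\X_g$ is neither established nor, as far as anyone knows, true. Recall that $\R^*(\X_g)$ is the very small ring $\R^*(\A_g)\otimes \mathbb{Q}[\theta]/(\theta^{g+1})$; already $(\aj_{\mathsf B})_*(1)=[\AJ_g]$ landing in this ring would make Corollary~\ref{ajppp} trivial, yet the paper derives that corollary \emph{from} Theorem~\ref{prop:lies_in_kernel}, not the other way around. Your appeal to Sections~\ref{exccalc} and~\ref{wallcr} is misplaced: those sections compute \emph{pullbacks} of product cycles under the Torelli and Abel--Jacobi maps, not \emph{pushforwards} of tautological classes to $\X_g$. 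The weaker variant you propose --- that $(\aj_{\mathsf B})_*(\delta)\cdot\lambda_g$ is tautological times $\lambda_g$ --- is no easier and amounts to knowing the tautological projection of every such pushforward.

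The paper's proof is entirely different and much more concrete: it shows the stronger vanishing $\lambda_g\cdot\overline{\Delta}_{g,1}=0$ in $\R^{2g}(\Mbar_{g,2})$ by computing both terms directly. Via Lemma~\ref{lem:generalizedNLGW}, $\aj^*[\PR_{g,1}]$ becomes a Gromov--Witten class on $\Mct_{g,2}(\pi,1)$; the identity $\lambda_g\cap[\Mbar_{g,2}(\pi,1)]^{\vir}=\tfrac{1}{24}\lambda_{g-1}\cap[\Mbar_{g,2}(E,1)]^{\vir}$ from \cite{ILPT} reduces this to a fixed elliptic target, where \cite{OP23} gives $\tfrac{1}{12|B_{2g}|}(\psi_1+\psi_2)\lambda_g\lambda_{g-1}$. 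On the other side, $\aj^*\big(\tfrac{1}{6|B_{2g}|}\theta\lambda_{g-1}\big)\cdot\lambda_g$ is computed from \eqref{thetapull}, and the boundary terms die against $\lambda_g\lambda_{g-1}$ via Mumford's relations $\lambda_h^2=0$. The two expressions match, proving the vanishing.

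For part (ii), your plan agrees with the paper's: the $(6,0)$ case is \cite{COP}, and $(5,1)$ is verified by an \textsf{admcycles} computation.
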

By \cite[Theorem 7]{CLS}, $\mathsf{K}_{5,2}$ is  1-dimensional. Therefore,
Theorem \hyperref[thm:10']{10$'$} 
is a consequence of 
parts (i) and (iii) of 
Theorem \ref{prop:lies_in_kernel}
for 
$\Delta_{5,1}\in \mathsf{K}_{5,2}$. Corollary \ref{cor:nont} stating that the classes $$[\PR_{5, 1}]\in \CH^{5}(\X_5), \quad [\PR_{4, 2}]\in \CH^{5}(\X^2_4)$$ 
are {\em not} tautological is a consequence of the non-vanishings in part (iii) of Theorem \ref{prop:lies_in_kernel}.

Theorem \ref{prop:lies_in_kernel} covers the first three cases listed in \eqref{eq:gorenstein_failures}.
The only obstruction to covering all the cases of \eqref{eq:gorenstein_failures}  
is computational. At the moment, the remaining cases require too much time and memory to compute.



We have also checked the vanishings $$\Delta_{2, 2}=0\in \CH^{3}(\M^{\ct}_{2, 4})\, , \quad \Delta_{2, 3}=0\in \CH^{4}(\M^{\ct}_{2, 6})\, , \quad \Delta_{3, 2}=0\in \CH^{4}(\M^{\ct}_{3, 4})\, ,$$ which are  consistent with the vanishings of the Gorenstein kernels in these cases proven in \cite{CLS}. 

For every $s \times n$ matrix ${\mathsf A}$, we define $$\Delta_{g,\mathsf {A}} = \aj_{\mathsf A}^* \Big([\PR_{g,s}]-\taut^{s}([\PR_{g,s}])\Big)\in \R^{g-1+s}(\Mct_{g,n})\,.$$
Based on the  calculations of Theorem \ref{prop:lies_in_kernel}, we propose the following general statement about the Gorenstein kernels of $\R^*(\M_{g,n}^{\ct})$.
\begin{speculation}\label{spec-ker} The Gorenstein kernels 
	 $\mathsf{K}_{g,n} \subset \R^*(\M_{g,n}^{\ct})$
     satisfy:
	\begin{itemize}
		\item[$(\mathsf{P1})$] 
        $\Delta_{g,{\mathsf A}}\in \mathsf{K}_{g,n}$ for
         all $g$ and all $s\times n$ matrices ${\mathsf A}$.
        \item[$(\mathsf{P2})$] $\{\mathsf{K}_{g,n}\subset \R^*(\M_{g,n}^{\ct})\}_{g,n}$ is the {\em smallest} system of ideals which is closed under properties $\textnormal{(i)-(v)}$ of Section \ref{section:intro_GorensteinKernel} and for which $(\mathsf{P1})$ holds.
	\end{itemize}
\end{speculation}

Calculations suggest that Speculation \ref{spec-ker} is correct at least for $\mathsf{K}_{g\leq 7,0}$. Pixton suggests that the first interesting case of Speculation \ref{spec-ker} is for codimension 7 cycles in $\mathsf{K}_{8,0}$.

\subsection{Proof of Theorem \ref{prop:lies_in_kernel}} \label{s65}

The verifications of Thereom \ref{prop:lies_in_kernel} (ii) and (iii) are through $\mathsf{admcycles}$, and the corresponding code can be found in \cite{codeFeusi}. In these cases, we know the 3-spin relations are complete \cite[Theorem 7, proof of Proposition 33]{CLS}. With the current computational tools, the proof that $\Delta_{4,2} \neq 0$ requires additional optimization to produce the matrix of 3-spin relations, which is explained in \cite[Section 7]{CLS} and was used for \cite[proof of Proposition 33]{CLS}.

\begin{proof}[Proof of Theorem \ref{prop:lies_in_kernel} (i)]
    The case $s=0$ corresponds to \cite [Theorem 4]{COP}, so we need only consider $s=1$. Then $$\Delta_{g, 1} = \aj^*\Big([\PR_{g, 1}] - \frac{1}{6|B_{2g}|} \lambda_{g-1}\cdot \theta\Big)\in \R^{g}(\M_{g, 2}^{\ct})\,,$$ for the Abel-Jacobi map $$\aj: \M_{g, 2}^{\ct}\to \X_g, \quad (C, p_1, p_2)\mapsto (\mathsf {Jac}(C), \mathcal O(p_1-p_2))\,.$$ 

    By Lemma \ref{lem:generalizedNLGW} and the definition of $[\M_{g,2}^{\ct, 0}(\pi ,1)]^{\vir}$, we obtain that 
    \begin{align*}
    \aj^*[\PR_{g,1}] &= \tau_*(\operatorname{ev}_2^*(0) \cap [\M_{g,2}^{\ct, 0}(\pi ,1)]^{\vir} )\\
    &=  \tau_*(\operatorname{ev}_1^*(0) \cdot \operatorname{ev}_2^*(0) \cap [\Mct_{g,{2}}(\pi ,1)]^{\vir}) \in \R^g(\M_{g, 2}^{\ct})\, .
    \end{align*}
    By \cite[Proposition 7]{ILPT}, we have
    $$
    \lambda_g \cap [\Mbar_{g,2}(\pi ,1)]^{\vir} = \frac{1}{24}\lambda_{g-1}\cap [\Mbar_{g,2}(E,1)]^{\vir}\, ,
    $$
    where $E$ is a fixed elliptic curve. We conclude
    \begin{align*}
    \overline{\aj^*([\PR_{g,1}])}\cdot \lambda_g  = \frac{1}{24}\tau_*(\operatorname{ev}_1^*(0) \cdot \operatorname{ev}_2^*(0) \cap [\Mbar_{g,{2}}(E ,1)]^{\vir})\cdot \lambda_{g-1} =\frac{1}{12|B_{2g}|}(\psi_1 + \psi_2) \lambda_g \lambda_{g-1}\, ,
    \end{align*}
    where the last equality is a special case of \cite[Proposition 6.8]{OP23}.
    
    On the other hand,
    \begin{align*}
    \overline{\aj^*\left(\frac{1}{6|B_{2g}|}\cdot \theta\cdot \lambda_{g-1}\right)}\cdot \lambda_g = \frac{1}{6|B_{2g}|}\cdot \frac{1}{2}\left(\psi_1 + \psi_2 - \sum_{h=1}^{g-1} \delta_{h, \{p_1\}}\right) \lambda_g \lambda_{g-1}\,,
    \end{align*} where \eqref{thetapull} was used in the last equality. 
    The class $\lambda_g\lambda_{g-1}$ annihilates the boundary contributions $\delta_{h, \{p_1\}}$ as can be seen from the Mumford relations $\lambda^2_h=0$ and $\lambda_{g-h}^2=0$. 

    We therefore obtain \begin{equation}\label{ldd}\lambda_g\cdot \bar {\Delta}_{g, 1}=0 \in \R^{2g}(\Mbar_{g, 2})\, ,\end{equation} which immediately implies that the class $\Delta_{g, 1}\in \R^g(\M_{g, 2}^{\ct})$ is in the kernel of the $\lambda_g$-pairing. 
\end{proof}

Theorem \ref{prop:lies_in_kernel} can be used to obtain cycles on the universal abelian variety that satisfy the homomorphism property \eqref {eq:sfold_hom_prop}. The Abel-Jacobi map
$$
\aj:\M_{g, 2}^{\ct}\to \X_g, \quad (C, p_1, p_2) \to (\textsf{Jac} (C), \mathcal O( p_1-p_2))
$$
is proper. The  {\it Abel-Jacobi cycle} is defined as the pushforward
$$
[\AJ_g]=\aj_*([\M_{g, 2}^{\ct}]) \in \CH^{\binom{g-1}{2}}(\X_g)\,.
$$ 

\begin{cor} \label{ajppp}
    The pair $([\AJ_g],[\PR_{g, 1}])$ satisfies the homomorphism property on $\X_g$.
\end{cor}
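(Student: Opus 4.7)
The plan is to reduce the homomorphism property to the already-established fact that $\Delta_{g,1}$ lies in the Gorenstein kernel of $\R^*(\M_{g,2}^{\ct})$, i.e.\ equation \eqref{ldd}. First, by the projection formula along the proper map $\aj:\M_{g,2}^{\ct}\to \X_g$,
\[
[\AJ_g]\cdot \alpha = \aj_*(\aj^*\alpha) \ \in \ \CH^*(\X_g)
\]
for every $\alpha\in \CH^*(\X_g)$. Applied to $\alpha=[\PR_{g,1}]$ and to $\alpha=\taut^1([\PR_{g,1}])$ and subtracting, together with property \eqref{tss} used on $\X_g$, the homomorphism property
\[
\taut^1([\AJ_g]\cdot[\PR_{g,1}]) = \taut^1([\AJ_g])\cdot \taut^1([\PR_{g,1}])
\]
is equivalent to the single vanishing
\[
\taut^1(\aj_*\Delta_{g,1})=0 \ \in \ \R^*(\X_g).
\]

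Second, by the definition of tautological projection on $\X_g$, the vanishing $\taut^1(\aj_*\Delta_{g,1})=0$ is equivalent to showing that, for every tautological class $\beta\in \R^*(\X_g)$ of complementary degree,
\[
\int_{\overline{\X}_g} \overline{\aj_*\Delta_{g,1}}\cdot \overline{\beta}\cdot \lambda_g = 0.
\]
The pullback $\aj^*\beta$ is tautological on $\M_{g,2}^{\ct}$ by equation \eqref{thetapull}, so any extension $\widetilde{\aj^*\beta}\in \R^*(\overline{\M}_{g,2})$ can be paired against the canonical extension $\overline{\Delta}_{g,1}$ appearing in \eqref{ldd}.

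Third, the key step will be to verify that the $\lambda_g$-pairing is compatible with pushforward along $\aj$, i.e.\
\[
\int_{\overline{\X}_g} \overline{\aj_*\Delta_{g,1}}\cdot \overline{\beta}\cdot \lambda_g
\;=\;
\int_{\overline{\M}_{g,2}} \overline{\Delta}_{g,1}\cdot \widetilde{\aj^*\beta}\cdot \lambda_g.
\]
To justify this I would choose a common nonsingular birational model $\widetilde{M}$ dominating $\overline{\M}_{g,2}$ on which $\aj$ extends to a proper morphism $\widetilde{\aj}\colon \widetilde{M}\to\overline{\X}_g$, and use the independence-of-lift property of \cite{CMOP}: both sides equal $\int_{\widetilde{M}} \widetilde{\aj}^*(\overline{\beta}\cdot\lambda_g)\cdot \widetilde{\Delta}_{g,1}$, because $\lambda_g$ annihilates any class supported on the toroidal boundary of $\overline{\A}_g$, which is where the discrepancy between $\widetilde{\aj}_*$ and a naive extension of $\aj_*$ lives. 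Once this adjunction is in place, equation \eqref{ldd} --- $\lambda_g\cdot \overline{\Delta}_{g,1}=0$ in $\R^{2g}(\overline{\M}_{g,2})$ --- immediately forces the right-hand side to vanish, completing the argument.

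The routine parts are the projection-formula reduction and the invocation of \eqref{ldd}. The main obstacle is the pushforward compatibility in the third step: one must check that a sufficiently nice simultaneous resolution of $\aj$ and the toroidal compactification exists, and that the correction terms between the various lifts are boundary-supported (hence killed by $\lambda_g$). This is very much in the spirit of \cite{CMOP,aitordenisjeremy}, and I expect no essential new input beyond carefully spelling out the extension data for $\aj$ over the boundary of $\overline{\M}_{g,2}$.
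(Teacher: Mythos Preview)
Your proof plan is correct and follows essentially the same approach as the paper: reduce the homomorphism property via the projection formula to the vanishing $\taut^1(\aj_*\Delta_{g,1})=0$, pass to a blowup $\widetilde{M}\to\overline{\M}_{g,2}$ on which $\aj$ extends to $\widetilde{\aj}\colon\widetilde{M}\to\overline{\X}_g$, and use that $\lambda_g$ annihilates all boundary-supported correction terms so that the integral reduces to one controlled by \eqref{ldd}. The paper carries out exactly this argument, spelling out the two boundary comparisons (between $\overline{[\AJ_g]}$ and $\widetilde{\aj}_*[\widetilde{M}]$, and between $\widetilde{\aj}^*\overline{[\PR_{g,1}]-\taut([\PR_{g,1}])}$ and $\pi^*\overline{\Delta}_{g,1}$) that you correctly anticipated in your third step.
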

We expect this result to hold more generally for pairs of the form $$([\AJ_{g,\mathsf{A}}],[\PR_{g, s}])$$ on $\X_g^s$, where $[\AJ_{g, \mathsf A}]\in \CH^*(\X_g^s)$ denotes the Abel-Jacobi cycle for arbitrary $g$ and $\mathsf{A}.$

\begin{proof}
    We must verify the equality $$\taut ([\AJ_g]\cdot [\PR_{g, 1}]) = \taut([\AJ_g])\cdot \taut( [\PR_{g, 1}]) \in \R^*(\X_g)\,.$$ For brevity, we omit here the superscripts, writing $\taut$ for the tautological projection on the universal abelian variety $\X_g$. Equivalently, we will prove $$\taut\left(
    [\AJ_g]\cdot
    \big([\PR_{g, 1}] - \taut ([\PR_{g, 1}])\big) \right)=0 \in \R^*(\X_g)\,.$$ By  definition of tautological projection, the above vanishing  amounts to showing \begin{equation}\label{xbar} \int_{\bar \X_g}  
    \overline{[\AJ_g]} \cdot
    \overline{[\PR_{g, 1}] - \taut ([\PR_{g, 1}])} \cdot   \bar{\alpha} \cdot \lambda_g=0\end{equation} for all tautological classes $\alpha\in \R^*(\X_g)$.  We fix
    here any smooth compactification $\bar{\X}_g$ of $\X_g$. 
    
    The crux of the argument is the vanishing \begin{equation}\label{vvv}\overline{\aj^*([\PR_{g, 1}] - \taut ([\PR_{g, 1}]))}\cdot \lambda_g=0 \in \R^{2g}(\Mbar_{g, 2})\end{equation} established in \eqref{ldd}. A minor complication is that the map $$\aj: \M_{g, 2}^{\ct} \to \X_g\hookrightarrow \bar{\X_g}$$ does not extend to $\Mbar_{g, 2}.$ However, such an extension always exists after suitable blowups, yielding morphisms $$\pi: \widetilde \M\to \Mbar_{g, 2}\, , \quad \widetilde {\aj}: \widetilde {\M}\to \bar \X_g\,.$$ We may assume $\pi$ is an isomorphism over the compact type locus $\M_{g, 2}^{\ct}$, which we view as embedded in $\widetilde \M$. Furthermore, we may take $\widetilde \M$ to be a nonsingular proper Deligne-Mumford stack.
    
    The two cycles $\overline {[\AJ_g]}$ and $\widetilde {\aj}_*([\widetilde \M])$ agree on $\X_g$, so $$\overline {[\AJ_g]}=\widetilde {\aj}_*([\widetilde \M])+\text{classes supported on } \bar \X_g\smallsetminus \X_g\,.$$ Since $\lambda_g$ annihilates the boundary terms, we can rewrite the left hand side of \eqref{xbar} as \begin{equation}\label{xbar1} \int_{\bar \X_g}  
    \widetilde {\aj}_*([\widetilde \M])\cdot
    \overline{[\PR_{g, 1}] - \taut ([\PR_{g, 1}])} \cdot   \bar \alpha \cdot \lambda_g = \int_{\widetilde \M}  \widetilde{ \aj}^*\overline{[\PR_{g, 1}] - \taut ([\PR_{g, 1}])} \cdot \widetilde \aj^*{\bar \alpha} \cdot \lambda_g \,.\end{equation}  The cycles $$\widetilde \aj^*\overline{[\PR_{g, 1}] - \taut ([\PR_{g, 1}])}\ \text{ and } \pi^*\overline{\aj^*([\PR_{g, 1}] - \taut ([\PR_{g, 1}]))}$$ agree on $\M_{g, 2}^{\ct}\hookrightarrow \widetilde \M$. Using again that $\lambda_g=0$ kills the boundary contributions, we rewrite \eqref{xbar1} as $$\int_{\widetilde \M} \pi^*\overline{\aj^*([\PR_{g, 1}] - \taut ([\PR_{g, 1}]))} \cdot \widetilde {\aj}^*{\bar {\alpha}} \cdot \lambda_g=0\,,$$ as needed. The last equality follows by pulling back \eqref{vvv} to $\widetilde \M$. 
\end{proof} 

\bibliographystyle{amsplain}
\bibliography{refs}
\end{document}